\newtheorem{theorem}{Theorem}
\newtheorem{conjecture}{Conjecture}
\newtheorem{lemma}[theorem]{Lemma}
\newtheorem{corollary}[theorem]{Corollary}
\newtheorem{observation}[theorem]{Observation}
\title{Density of $5/2$-critical graphs}
\author{Zdeněk Dvořák\thanks{Charles University, Prague, Czech Republic.
E-mail: {\tt rakdver@iuuk.mff.cuni.cz}.  Supported by project GA14-19503S (Graph coloring and structure) of Czech Science Foundation.}\and
Luke Postle\thanks{University of Waterloo. E-mail: {\tt lpostle@uwaterloo.ca}.  Partially supported by NSERC under Discovery Grant No. 2014-06162.}}
\date{}
\begin{document}
\maketitle

\begin{abstract}
A graph $G$ is \emph{$5/2$-critical} if $G$ has no circular $5/2$-coloring (or equivalently,
homomorphism to $C_5$), but every proper subgraph of $G$ has one.  We prove that every $5/2$-critical
graph on $n\ge 4$ vertices has at least $\frac{5n-2}{4}$ edges, and list all $5/2$-critical
graphs achieving this bound.  This implies that every planar or projective-planar graph
of girth at least $10$ is $5/2$-colorable.
\end{abstract}

\section{Introduction}

Let $G$ and $H$ be graphs.  A \emph{homomorphism} from $G$ to $H$ is a function $f:V(G)\to V(H)$ such that for every $uv\in E(G)$,
the graph $H$ contains the edge $f(u)f(v)$.  In this paper, we are concerned with homomorphisms to a $5$-cycle, which
are intensively studied in the context of circular colorings.

Let $G$ be a graph. For a real number $r\ge 1$, a function $\varphi$ from $V(G)$ to the circle with circumference $r$ is a \emph{circular $r$-coloring}
if for every edge $uv\in E(G)$, the distance between $\varphi(u)$ and $\varphi(v)$ in the circle is at least one.  The infimum of $r\ge 1$ such that $G$ has
an $r$-coloring is called the \emph{circular chromatic number} of $G$ and denoted by $\chi_c(G)$.  Circular chromatic number was introduced by Vince~\cite{vince},
who also showed that the circular chromatic number of a graph differs from the ordinary chromatic number by at most $1$, and thus it refines the information on coloring
properties of the graph.  For more results on circular chromatic number, see the surveys of Zhu~\cite{zhusurvey1,zhusurvey2}.

It is easy to see that for an integer $t\ge 1$, a graph $G$ has circular chromatic number at most $2+\frac{1}{t}$ if and only if $G$ has a homomorphism to $C_{2t+1}$.
Jaeger~\cite{jaeger} gave a conjecture concerning modular orientations, whose dual for planar graphs is the following.
\begin{conjecture}\label{conj-main}
For every integer $t\ge 1$, every planar graph of girth at least $4t$ has a homomorphism to $C_{2t+1}$.
\end{conjecture}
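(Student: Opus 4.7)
The plan is to attack Conjecture~\ref{conj-main} via the classical \emph{critical-graph plus density} paradigm that proves Grötzsch's theorem (the $t=1$ case). Call $G$ a \emph{$C_{2t+1}$-critical} graph if $G$ has no homomorphism to $C_{2t+1}$ but every proper subgraph does. A minimum counterexample to the conjecture must be a planar $C_{2t+1}$-critical graph of girth at least $4t$, so it suffices to rule out such a graph by an edge-count argument.

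The engine driving the contradiction would be a density lemma of the form
\[
|E(G)| \;\ge\; \frac{2t}{2t-1}\,|V(G)| - c_t
\]
for every $C_{2t+1}$-critical graph of girth at least $4t$, with constant $c_t$ strictly less than $\frac{4t}{2t-1}$. Euler's formula applied to a planar graph of girth at least $4t$ gives $|E(G)|\le \frac{2t}{2t-1}(|V(G)|-2)$, which is then incompatible with the density lemma and yields the desired contradiction. For $t=2$ the abstract's bound $|E(G)|\ge (5n-2)/4$ plays exactly this role at the (slightly relaxed) girth $10$, where Euler gives $|E(G)|\le 5(n-2)/4$; to reach the conjectural girth $4t=8$ one would need the strictly stronger slope $4/3$ in place of $5/4$.

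The hard part is the density lemma itself. The standard route is to introduce a potential $\rho(H) = a|V(H)| - b|E(H)|$ scaled so that the target reads ``$\rho\le\text{const}$,'' enumerate a list of \emph{reducible configurations} (low-degree vertices, short chains of degree-$2$ vertices, and small separations whose two sides can be independently mapped to $C_{2t+1}$ thanks to the criticality of $G$), verify their absence in a minimum counterexample by extending partial homomorphisms across the configuration, and finish with a discharging argument on the sparse graphs that remain. The genuine obstacle, and the reason Conjecture~\ref{conj-main} is still open for $t\ge 2$, is that extending a partial homomorphism to $C_{2t+1}$ across a small cut is far more delicate than extending a partial proper coloring: the admissible images of each boundary vertex form a union of arcs in $\mathbb{Z}/(2t+1)\mathbb{Z}$, and one must track how these arc systems interact as configurations are peeled off. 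This is where Grötzsch-style reductions cease to be bookkeeping and become the mathematical content of the paper.
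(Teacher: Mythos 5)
You have not given a proof: the statement you were assigned is stated in the paper as an open conjecture (only the case $t=1$, Gr\"otzsch's theorem, is known), and the paper itself does not prove it --- it proves the weaker result that girth $10$ suffices for $t=2$, via the bound $e(G)\ge\frac{5n(G)-2}{4}$ for $5/2$-critical graphs. Your text is a research plan, and you concede as much in the last paragraph, so at best it should be judged as a proposed strategy rather than an argument.

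More importantly, the central step of your strategy is provably false, and the paper explains why. You would need a density lemma saying that every $C_{2t+1}$-critical graph of girth at least $4t$ satisfies $e(G)\ge\frac{2t}{2t-1}\,n(G)-c_t$ with $c_t<\frac{4t}{2t-1}$; for $t=2$ this means $e(G)\ge\frac{4}{3}n(G)-c$ for $5/2$-critical graphs. But Ore's construction gives $6$-critical graphs with $5m+1$ vertices and $14m+1$ edges, and subdividing every edge twice produces $5/2$-critical graphs (of girth at least $9$, so within your girth hypothesis) with $33m+3$ vertices and only $42m+3$ edges, i.e.\ density tending to $14/11$. Since $\frac{4}{3}(33m+3)=44m+4$ exceeds $42m+3$ by a linearly growing amount, no bound of the required form can hold. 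Even the refinement you hint at --- using the Folding lemma to eliminate facial $8$-cycles and thereby lower the Euler bound for planar girth-$8$ graphs to density $9/7$ --- does not help, because $9/7>14/11$, so the window between the true critical density and the planar density at girth $8$ cannot be closed by edge counting alone. Any proof of the conjecture for $t\ge 2$ must exploit planarity (or other structure) beyond a global density bound; this is exactly the obstruction the paper points out when it states that Theorem~\ref{thm-mainsimp} cannot be strengthened, in the density-only sense, to imply Conjecture~\ref{conj-main} for $t=2$.
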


The case $t=1$ is equivalent to Gr\"otzsch' theorem~\cite{grotzsch1959}, and it is the only case where Conjecture~\ref{conj-main} is confirmed.
For general $t$, the best known result is by Borodin et al.~\cite{bkkw} who proved that every planar graph of girth at least $\frac{20t-2}{3}$ has a homomorphism to $C_{2t+1}$.
This improves a previous result of Zhu~\cite{zhucirc}.
In this paper, we focus on the case $t=2$.  Here, Zhu~\cite{zhucirc} shows that every planar graph of girth at least $12$ has a homomorphism to $C_5$.
According to~\cite{devosopen}, DeVos and Deckelbaum claim an unpublished improvement showing that every planar graph without odd cycles of length at most $9$
(and consequently, every planar graph of girth at least $10$) has a homomorphism to $C_5$.

In this paper, we give a strengthening of these results by showing that it is sufficient to only bound the density of the colored graph, rather than assuming its planarity.
A first result in this direction is by Borodin et al.~\cite{dens65}, who proved that if $G$ is a triangle-free graph such that $e(H)/n(H)< 6/5$ for every $H\subseteq G$, then $G$ is $5/2$-colorable.
It would be possible to state our result similarly in the terms of the maximum edge density of subgraphs, but it is more convenient (and slightly less restrictive)
to work in the setting of critical graphs.  A graph $G$ is \emph{$5/2$-critical} if $G$ has no circular $5/2$-coloring (or equivalently,
homomorphism to $C_5$), but
every proper subgraph of $G$ has one.  Let $e(G)$ denote the number of edges of $G$
and $n(G)$ the number of its vertices.  Let the \emph{potential} of $G$ be $p(G)=5n(G)-4e(G)$.
Let $C_k$ denote the cycle with $k$ vertices and $P_k$ the path with $k$ edges.
Note that $p(C_k)=k$ and $p(P_k)=5+k$.

\begin{theorem}\label{thm-mainsimp}
If $G$ is a $5/2$-critical graph distinct from $C_3$, then $p(G)\le 2$.
\end{theorem}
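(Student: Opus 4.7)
The plan is to proceed by induction on $n(G)$ via a minimum counterexample argument. Suppose $G$ is $5/2$-critical, $G \ne C_3$, $p(G) \ge 3$, and $n(G)$ is minimum with these properties. Small base cases (such as $K_4$) are verified by direct inspection. The first step is a set of basic reductions: $G$ must be $2$-connected with minimum degree at least $2$, since cut vertices and degree-$1$ vertices would permit direct extension of a $5/2$-coloring from a proper subgraph. Small vertex or edge cuts can likewise be ruled out by splitting $G$ along the cut, coloring both sides using criticality, and gluing the two colorings via the flexibility of $5/2$-colorings across the small boundary.

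The heart of the argument is an analysis of degree-$2$ threads. A thread of $\ell$ edges (with $\ell-1$ internal degree-$2$ vertices) contributes $5(\ell-1) - 4\ell = \ell - 5$ to the potential, so long threads drive $p(G)$ upward. The key combinatorial fact is that in $C_5$ every pair of vertices is joined by walks of every length $\ge 4$, so long threads offer flexibility when one tries to extend a coloring from the rest of the graph. I would use this to show that any sufficiently long thread in $G$ is reducible: contracting the thread to a shorter one produces a graph that is either still non-$5/2$-colorable (hence contains a smaller $5/2$-critical subgraph, to which induction applies) or directly yields a $5/2$-coloring of $G$. The potential accounting between $G$ and the contracted graph should then show that the existence of long threads is incompatible with $p(G) \ge 3$. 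Analogous arguments should dispose of other small local configurations, most notably triangles, which interact strongly with degree-$2$ structure since $C_5$ is triangle-free.

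Once the local structure of $G$ is sufficiently constrained, the argument concludes via discharging. Assign each vertex $v$ the initial charge $\mathrm{ch}(v) = \deg(v) - 5/2$, so that the total initial charge is $2e(G) - (5/2)n(G) = -p(G)/2 \le -3/2$. Rules that send charge from vertices of degree $\ge 3$ to neighboring degree-$2$ vertices along threads, combined with the structural bounds from the previous step, should force the final total charge to exceed $-3/2$, yielding the contradiction. The main obstacle is the reducibility analysis: because $C_5$ is not a clique, extending a $5/2$-coloring across a contracted thread depends on the specific pair of colors at the endpoints (whether equal, adjacent, or at distance $2$ in $C_5$), so each reduction requires a case analysis over the possible boundary color pairs and an argument that non-$5/2$-colorability is preserved. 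Since the bound $p(G) \le 2$ is tight (as indicated by the abstract), the reducibility analysis must be sharp enough not to over-exclude, making the bookkeeping especially delicate.
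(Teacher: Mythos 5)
Your outline has the right general flavor (minimal counterexample, potential accounting in the Kostochka--Yancey style, discharging), but as written it would not go through, and the gaps are structural rather than bookkeeping. The first missing idea is forced by the tightness of the bound: since $p=2$ is attained (by $E_1$ and $E_2$), a minimal counterexample to ``$p\ge 3$'' only gives you $p(G_2)\le 2$ for smaller critical graphs arising in a reduction, and that is too weak. Every reduction of the form ``identify part of $G$, take a critical subgraph $G_2$ of the result, re-expand'' loses an additive constant in potential, so the paper needs $p(G_2)\le 1$ except for explicitly known graphs; this is why it proves the stronger Theorem~\ref{thm-main}, characterizing the critical graphs of potential exactly $2$, and why the exceptional graphs reappear inside the key subgraph-potential lemma (the classes $E_1(H)$, $E_2(H)$ in Lemma~\ref{lemma-light}) and are repeatedly eliminated via a computer verification that no counterexample on at most $21$ vertices exists (Lemma~\ref{lemma-nosmall}). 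Your plan contains neither this strengthened induction hypothesis nor anything playing the role of Lemma~\ref{lemma-light} (that almost every proper subgraph of a minimal counterexample has potential at least $6$), and without such a lemma the ``potential accounting between $G$ and the contracted graph'' you invoke cannot be carried out; also, ``base cases such as $K_4$ by inspection'' badly underestimates what must be checked.

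The second gap is that you have located the difficulty in the wrong place. Threads with at least four edges are trivially reducible by exactly the walk observation you state (Observation~\ref{obs-simp}); the real work concerns short strings (one or two internal degree-two vertices) and, above all, $5$-cycles, because identifying vertices at distance two -- the basic move in all the reductions -- creates triangles precisely when $5$-cycles are nearby. The paper must show that $5$-cycles are pairwise vertex-disjoint (Lemma~\ref{lemma-disj5}), exclude $6$- and $8$-cycles, prove that every $5$-cycle meets at least four strings (Lemma~\ref{lemma-wtcell3}) and bound the weights of vertices and of $5$-cycles; the discharging then pools the charge of each $5$-cycle and treats it as a single object, with a second round of charge transfers along $1$-strings. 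Your discharging sketch (vertices of degree at least $3$ pay their degree-two neighbors) does not engage with any of this, and a straightforward rule of that shape will not close, since $(1,1,1)$-vertices and $5$-cycles of degree four end up with positive charge unless the finer structural lemmas (\ref{lemma-opp5c}, \ref{lemma-111}, \ref{lemma-2111}, \ref{lemma-wtcell4}) are available. Finally, the claim that small vertex or edge cuts can be handled by coloring the two sides and ``gluing via flexibility'' is unjustified for homomorphisms to $C_5$: the colorings of the two sides of a $2$-cut need not be compatible, and the paper never uses, nor needs, such a reduction.
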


To see how this relates to the previous results, let us state the well-known consequence of Euler's formula for the density of an embedded graph.

\begin{observation}\label{obs-dense}
Let $G$ be graph with a $2$-cell embedding in a surface of Euler genus $g$ such that the boundary walk of every face of $G$ has length at least $10$.
Then $p(G)\ge 10-5g$.
\end{observation}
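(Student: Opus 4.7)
The plan is a direct double-counting argument using Euler's formula; there is no real obstacle here, since this is a textbook-style girth/density calculation adapted to the potential $p(G) = 5n(G)-4e(G)$ and to arbitrary surfaces rather than the plane.

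First I would set $n = n(G)$, $m = e(G)$, and let $f$ denote the number of faces of the given $2$-cell embedding. Because the embedding is $2$-cell in a surface of Euler genus $g$, Euler's formula gives
\[
n - m + f = 2 - g.
\]
Next I would double-count incidences between edges and face boundary walks: every edge is traversed exactly twice in total by the boundary walks (once from each side, or twice from the same side if the edge is a bridge in its face), so
\[
\sum_{F \text{ face}} \mathrm{len}(F) \;=\; 2m.
\]
Combined with the hypothesis that every face boundary walk has length at least $10$, this yields $10 f \le 2m$, i.e.\ $f \le m/5$.

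Finally I would substitute the inequality $f \le m/5$ into Euler's formula to eliminate $f$: from $n - m + f = 2-g$ and $f \le m/5$ we get $n - m + m/5 \ge 2 - g$, i.e.\ $n - \tfrac{4}{5} m \ge 2 - g$, and multiplying by $5$ gives
\[
p(G) \;=\; 5n - 4m \;\ge\; 10 - 5g,
\]
which is exactly the claimed bound.
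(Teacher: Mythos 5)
Your proof is correct and is exactly the standard Euler-formula double-counting argument that the paper has in mind; indeed the paper states this observation without proof as a well-known consequence of Euler's formula, and your computation ($n-m+f=2-g$, $2m=\sum_F \mathrm{len}(F)\ge 10f$, then eliminate $f$) fills it in precisely.
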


Consequently, we have the following.
\begin{corollary}\label{cor-girth10}
Every planar or projective-planar graph of girth at least $10$ has a homomorphism to $C_5$.
\end{corollary}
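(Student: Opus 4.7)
The plan is a direct combination of Theorem~\ref{thm-mainsimp} and Observation~\ref{obs-dense}. Suppose for contradiction that $G$ is a planar or projective-planar graph of girth at least $10$ that admits no homomorphism to $C_5$. Then $G$ contains an (edge-)minimal subgraph $G'$ with no homomorphism to $C_5$, and by definition $G'$ is $5/2$-critical. Since every subgraph of $G$ still has girth at least $10$, the graph $G'$ is certainly not $C_3$, so Theorem~\ref{thm-mainsimp} yields $p(G')\le 2$.

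The goal is now to contradict this by producing a 2-cell embedding of $G'$ in a surface of Euler genus $g\le 1$ whose faces all have boundary walks of length at least $10$; Observation~\ref{obs-dense} will then give $p(G')\ge 10-5g\ge 5$, contradicting $p(G')\le 2$. Since $G'\subseteq G$, the graph $G'$ itself is planar or projective-planar. If $G'$ is planar, we take its embedding in the sphere ($g=0$); otherwise, we take a (necessarily 2-cell) embedding in the projective plane ($g=1$).

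The only minor obstacle is verifying the hypotheses of Observation~\ref{obs-dense}: that the embedding is 2-cell and that every face walk has length at least $10$. Both follow once one knows that $G'$ is 2-connected. I would justify this by the standard critical-graph argument: if $G'$ had a bridge or cut-vertex, one could independently homomorphism-color the pieces and then paste the colorings together by composing with an appropriate automorphism of $C_5$ (rotating so that the endpoints of the bridge map to adjacent vertices of $C_5$, or so that the cut-vertex receives the same image in each piece), contradicting the fact that $G'$ has no homomorphism to $C_5$. Since $G'$ is 2-connected, every face in any 2-cell embedding is bounded by a cycle, and any cycle in $G'$ has length at least~$10$ because $G'$ has girth at least $10$.

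Putting the pieces together, $p(G')\ge 10-5g\ge 5>2\ge p(G')$, the desired contradiction. The technical content is entirely in Theorem~\ref{thm-mainsimp}; the corollary itself is essentially a bookkeeping step combining it with Euler's formula and the standard $2$-connectedness of critical graphs.
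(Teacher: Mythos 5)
Your overall route is exactly the paper's: extract a $5/2$-critical subgraph $G'$, note it is not $C_3$ because of the girth hypothesis, apply Theorem~\ref{thm-mainsimp} to get $p(G')\le 2$, and contradict this with Observation~\ref{obs-dense} applied to an embedding of $G'$ in the sphere or the projective plane. The only place where your write-up deviates from the paper is in how you verify the hypothesis that every facial walk has length at least $10$, and there one of your intermediate claims is false: it is \emph{not} true that a $2$-connected graph with a $2$-cell embedding in the projective plane has all faces bounded by cycles. For instance, a cycle embedded along a noncontractible simple closed curve is $2$-connected and cellularly embedded, yet its unique face has a boundary walk traversing every edge twice. (The equivalence ``faces are cycles'' requires representativity at least two, not just $2$-connectivity, once you leave the sphere.) So the sentence ``every face in any $2$-cell embedding is bounded by a cycle'' would not survive refereeing as stated.

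The repair is immediate and is what the paper does: since $G'$ is connected (criticality) and is not a tree (it has minimum degree at least two, or simply: trees map to $C_5$), every facial walk of a cellular embedding of $G'$ \emph{contains} a cycle, and hence has length at least the girth, i.e.\ at least $10$; that is all Observation~\ref{obs-dense} needs. Your $2$-connectivity argument (pasting homomorphisms across a cut-vertex or bridge using the vertex- and edge-transitivity of $C_5$) is correct but stronger than necessary; connectivity plus ``not a tree'' already suffices. Also note that in the planar case one should take the spherical embedding of the connected graph $G'$ itself (as you do), since $G$ may be disconnected or have faces with short boundary walks coming from parts outside $G'$. With the facial-walk claim corrected as above, your proof is the paper's proof.
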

\begin{proof}
Suppose for a contradiction that $G$ is a planar or projective-planar graph of girth at least $10$ that has no homomorphism to $C_5$.
Hence, $G$ contains a $5/2$-critical subgraph $G_1$.  Observe that $G_1$ is connected, and if it is not planar,
then its embedding in the projective plane is cellular.  Furthermore, $G_1$ is not a tree, and thus
the boundary walk of every face of $G_1$ includes a cycle, and by the girth assumption it has length at least $10$.
Consequently, $p(G_1)\ge 10-5g\ge 5$, which contradicts Theorem~\ref{thm-mainsimp}.
\end{proof}

Note that the edge density condition of Theorem~\ref{thm-mainsimp} cannot be strengthened so that it would imply Conjecture~\ref{conj-main} with $t=2$.
A natural bound follows from $6$-critical graphs (i.e., graphs that are not $5$-colorable, but all their proper subgraphs
are $5$-colorable).  If $G$ is a $6$-critical graph, it is easy to see that the graph $G'$ obtained from $G$ by subdividing each edge twice
is $5/2$-critical.  For every integer $m\ge 1$, a construction of Ore~\cite{ore} gives a $6$-critical graph $G$ with $5m+1$ vertices and $14m+1$ edges,
and thus $G'$ has $33m+3$ vertices and $42m+3$ edges, giving asymptotically the edge density $42/33=14/11$.
On the other hand, planar graphs of girth $8$ can have edge density arbitrarily close to $4/3$.  Using the Folding lemma~\cite{KloZhang}, we could eliminate facial $8$-cycles,
bringing the density down to $9/7$, but this is still too large.  Nevertheless, we find it plausible that asymptotically, the density $14/11$ is the right
bound for $5/2$-critical graphs. More strongly, we conjecture the following bound, which is tight for the graphs obtained by the Ore construction as described, as well as the triangle.

\begin{conjecture}\label{conj-genmain}
If $G$ is a $5/2$-critical graph, then $14n(G)-11e(G)\le 9$.
\end{conjecture}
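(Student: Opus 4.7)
The plan is to extend the potential-method framework of Theorem~\ref{thm-mainsimp} to the finer potential
\[
p'(G)\;:=\;14n(G)-11e(G),
\]
calibrated so that both $C_3$ and every double subdivision of an Ore $6$-critical graph---the two tight families---lie on the boundary $p'=9$. Suppose for contradiction that some $5/2$-critical graph has $p'>9$, and choose $G$ minimizing $n(G)+e(G)$; then $G\ne C_3$ and every proper $5/2$-critical subgraph $H$ of $G$ satisfies $p'(H)\le 9$.

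A useful bookkeeping device is the \emph{thread decomposition}: let $B$ be the set of branch vertices (degree $\ge 3$), and for each maximal subpath whose internal vertices have degree~$2$ (a \emph{thread} $t$) let $k_t$ be its length. Since $5/2$-critical graphs have minimum degree $2$, threads partition the edges and a short count yields
\[
p'(G)\;=\;14|B|+\sum_{t}(3k_t-14).
\]
A classification of walks in $C_5$ by length shows that a length-$k$ path into $C_5$ realizes every endpoint color pair when $k\ge 4$, exactly the pairs with $a\ne b$ when $k=3$, the pairs with $d_{C_5}(a,b)\in\{0,2\}$ when $k=2$, and only edges of $C_5$ when $k=1$. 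In particular, a thread of length $\ge 4$ imposes no constraint on its endpoints, while length-$3$ threads behave precisely like the ``not-equal'' edges of a proper $5$-coloring---which is why doubly subdividing an Ore $6$-critical graph yields a $5/2$-critical graph.

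The main steps would then be:
(i) Length-$4$ threads are reducible: deleting the $3$ internal vertices and $4$ edges yields a smaller $5/2$-critical $G'$ (because any $C_5$-homomorphism of $G'$ extends to $G$) with $p'(G')=p'(G)+2\ge 12$, contradicting minimality.
(ii) Threads of length $k\ge 5$ are reducible by a more delicate substitution; direct deletion drops $p'$ by only $1$ or $3$, so one must replace the thread by a thread of length $k-2$ (which imposes the same vacuous constraint once $k-2\ge 4$) together with a local modification ensuring criticality.
(iii) Small constellations of length-$1$ and length-$2$ threads at low-degree branch vertices are ruled out by direct extension of partial $C_5$-homomorphisms.
(iv) A discharging argument on (i)--(iii) in which each branch vertex distributes its charge $14$ to its incident threads concludes $p'(G)\le 9$.

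The main obstacle is step~(ii): although a length-$k$ thread with $k\ge 5$ imposes no constraint on endpoint colors, the naive reduction does not contradict minimality, so the substitute must be chosen so that both the replacement graph is $5/2$-critical and its potential still exceeds $9$. This likely requires tracking the exact list of admissible endpoint assignments that the rest of $G$ forces on the thread endpoints, in the spirit of the list-coloring reductions used for Theorem~\ref{thm-mainsimp}. A secondary obstacle is that $9$ is attained on an infinite family parametrized by Ore $6$-critical graphs, so the discharging must produce \emph{exactly} zero slack on all such extremes---which typically forces nonlocal rules keyed to the $6$-critical skeleton residing on $B$ (for instance, by crediting each length-$3$ thread to its two branch endpoints and then appealing to an Ore-type lower bound on the edges of $G[B]$).
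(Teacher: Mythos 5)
There is a fundamental problem: the statement you are trying to prove is Conjecture~\ref{conj-genmain}, which the paper explicitly leaves open. The paper proves only the weaker bound $5n(G)-4e(G)\le 2$ (Theorem~\ref{thm-mainsimp}) and offers the $14n-11e\le 9$ bound as a conjecture motivated by the Ore construction; there is no proof to compare against, and your proposal is not a proof either --- it is an outline that itself flags unresolved obstacles. So the verdict can only be that a genuine gap remains, both in the proposal and in the literature.

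Beyond that, the concrete steps you do spell out are partly misdirected. Threads of length at least $4$ cannot occur in \emph{any} $5/2$-critical graph: the internal vertices form a proper subgraph whose $C_5$-homomorphism extends across the thread (this is exactly the paper's Observation~\ref{obs-simp}, where a $k$-string with $k\ge 3$ has $k+1\ge 4$ edges). Hence your step~(i) is stated incorrectly --- deleting the interior of such a thread does not produce a smaller $5/2$-critical graph; rather, the extension argument shows the thread simply does not exist --- and your step~(ii), which you single out as the main obstacle, addresses configurations that are vacuously absent, so no ``delicate substitution'' preserving criticality is needed or even meaningful. The real difficulty lies entirely in your steps~(iii)--(iv): after the trivial exclusion of long threads, a $5/2$-critical graph is built from threads of length $1$, $2$, $3$, and the entire content of the paper's proof of the weaker Theorem~\ref{thm-mainsimp} (the potential Lemma~\ref{lemma-light}, the structure of cells, and the discharging of Section~\ref{sec-disch}) is devoted to controlling exactly these; pushing the density bound from $5/4$ to $14/11$, with zero slack on the infinite Ore-derived family, would require a substantially sharper potential function and reducibility analysis that neither you nor the paper supplies. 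Your thread-count identity $p'(G)=14|B|+\sum_t(3k_t-14)$ and the classification of walk lengths in $C_5$ are correct, but they only restate the problem; the proposal does not establish the conjecture.
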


Let us now give a few more consequences of our result.
In the planar case, we can use the well-known Folding lemma to argue that we only need to exclude odd cycles.

\begin{corollary}\label{cor-oddgirth}
Every planar graph without odd cycles of length at most $9$ has a homomorphism to $C_5$.
\end{corollary}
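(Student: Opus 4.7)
The plan is to invoke the Folding lemma of Klostermeyer and Zhang~\cite{KloZhang} to reduce to a situation in which Theorem~\ref{thm-mainsimp} applies directly. Suppose for contradiction that the corollary fails, and choose a counterexample $G$---a planar graph with no odd cycle of length at most $9$ and no homomorphism to $C_5$---minimizing the pair $(n(G),e(G))$ lexicographically. By minimality every proper subgraph of $G$ has a homomorphism to $C_5$, so $G$ is $5/2$-critical; since $G$ contains no triangle, $G\ne C_3$, and Theorem~\ref{thm-mainsimp} yields $p(G)\le 2$. Fix a plane embedding of $G$ and let $2k+1\ge 11$ denote its odd girth.

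Proceed by case analysis on face lengths. If some face $F$ of $G$ has length $\ne 2k+1$, apply the Folding lemma to obtain two vertices $u,v$ on $F$ at facial distance $2$ whose identification produces a plane graph $G'$ with one fewer vertex and the same odd girth $2k+1$. Any homomorphism $G'\to C_5$ pulls back along the natural projection $V(G)\to V(G')$ to a homomorphism $G\to C_5$, which does not exist; thus $G'$ (after simplifying any parallel edges created by folding) is a strictly smaller counterexample, contradicting minimality. Consequently every face of $G$ has length exactly $2k+1$. Combining $2e(G)=(2k+1)f(G)$ with Euler's formula $n(G)-e(G)+f(G)=2$ gives $e(G)=(2k+1)(n(G)-2)/(2k-1)$, so
\[ p(G)=\frac{(2k-9)\,n(G)+8(2k+1)}{2k-1}. \]
Since $G$ contains a $(2k+1)$-cycle we have $n(G)\ge 2k+1$, and a direct substitution yields $p(G)\ge 2k+1\ge 11$, contradicting $p(G)\le 2$.

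The main obstacle is the careful invocation of the Folding lemma: one must verify that it can be applied (the face length differs from the odd girth), that the resulting plane graph has \emph{exactly} the same odd girth, and that the possible creation of multi-edges---when $u$ and $v$ share a neighbor outside $F$---is harmless. Simplification of $G'$ by collapsing parallel edges is safe, since a digon is an even cycle and therefore cannot affect the odd girth, and parallel edges carry no additional constraints for homomorphisms to $C_5$. A pleasant feature of this approach is that Corollary~\ref{cor-girth10} itself is never needed: once folding has rendered every face of length $2k+1$, Theorem~\ref{thm-mainsimp} alone provides the contradiction.
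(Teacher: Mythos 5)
Your proof is correct and follows essentially the same route as the paper: take a minimal counterexample, note that it is $5/2$-critical so that Theorem~\ref{thm-mainsimp} gives $p(G)\le 2$, and combine the Folding lemma (a homomorphism of the folded graph pulls back to $G$, contradicting minimality) with an Euler-formula density estimate. The only cosmetic difference is the order in which the two ingredients are used: the paper invokes Observation~\ref{obs-dense} to find a face of length at most $9$ and folds that face, whereas you fold whenever some face is not an odd-girth cycle and, in the remaining case where every face has length $2k+1\ge 11$, derive the contradiction $p(G)\ge 2k+1\ge 11$ directly from Euler's formula.
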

\begin{proof}
Suppose for a contradiction that there exists a planar graph $G$ without odd cycles of length at most $9$ that has no homomorphism to $C_5$.
Choose $G$ with $n(G)+e(G)$ minimal.  In particular, $G$ is $5/2$-critical, and thus by Theorem~\ref{thm-mainsimp} and
Observation~\ref{obs-dense}, a plane embedding of $G$ has a face with boundary walk of length at most $9$.
By the Folding lemma~\cite{KloZhang}, there exist distinct non-adjacent vertices $u,v\in V(G)$ such that if $G_1$ is the graph obtained from $G$ by identifying $u$ with $v$ to a new vertex $z$,
then $G_1$ is planar and has no odd cycle of length at most $9$.  However, by the minimality of $G$, there exists a homomorphism $\psi$ from $G_1$ to $C_5$, which can be extended
to a homomorphism from $G$ to $C_5$ by setting $\psi(u)=\psi(v)=\psi(z)$.  This is a contradiction.
\end{proof}

Finally, the difference between the bounds of Theorem~\ref{thm-mainsimp} and Observation~\ref{obs-dense} is rather large in the planar case, enabling us to obtain a
strengthening with precolored vertices.  Let $G$ be a graph and $C$ a $5$-cycle, and consider $v_1,v_2\in V(G)$ and $c_1,c_2\in V(G)$.  We say that
the pair $(c_1,c_2)$ is \emph{plausible for $(v_1,v_2)$} if
\begin{itemize}
\item $v_1=v_2$ and $c_1=c_2$, or
\item $v_1v_2\in E(G)$ and $c_1c_2\in E(C)$, or
\item the distance between $v_1$ and $v_2$ in $G$ is exactly two, and $c_1$ is not adjacent to $c_2$ in $C$, or
\item the distance between $v_1$ and $v_2$ in $G$ is exactly three, and $c_1\neq c_2$, or
\item the distance between $v_1$ and $v_2$ in $G$ is at least four.
\end{itemize}
Observe that if $G$ has a homomorphism $\psi$ to $C$ such that $\psi(v_1)=c_1$ and $\psi(v_2)=c_2$, then $(c_1,c_2)$ is plausible for $(v_1,v_2)$.
Conversely, we have the following.

\begin{corollary}\label{cor-precol}
Let $G$ be a planar graph of girth at least $10$, let $C$ be a $5$-cycle, and let $v_1,v_2\in V(G)$ and $c_1,c_2\in V(C)$ be arbitrary vertices.
Then $G$ has a homomorphism $\psi$ to $C$ such that $\psi(v_1)=c_1$ and $\psi(v_2)=c_2$ if and only if $(c_1,c_2)$ is plausible for $(v_1,v_2)$.
\end{corollary}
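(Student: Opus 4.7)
The ``only if'' direction is immediate from the observation preceding the statement. For the converse, assume $(c_1,c_2)$ is plausible for $(v_1,v_2)$. The plan hinges on the fact that $\mathrm{Aut}(C)$ is the dihedral group of order~$10$ and acts transitively on each of the three orbits of ordered pairs in $V(C)$: the diagonal pairs $(c,c)$, the adjacent pairs, and the distance-$2$ pairs. Hence it suffices to produce \emph{some} homomorphism $\psi: G\to C$ with $(\psi(v_1),\psi(v_2))$ in the same orbit as $(c_1,c_2)$; the required homomorphism is then recovered by composing with a suitable element of $\mathrm{Aut}(C)$.

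The cases $v_1=v_2$ and $v_1v_2\in E(G)$ follow immediately from Corollary~\ref{cor-girth10}. In the remaining (non-trivial) cases I would form an auxiliary graph $G^+$ by attaching a small gadget at $v_1$ and $v_2$ tailored to the target orbit: for the diagonal orbit, identify $v_1$ with $v_2$; for the adjacent orbit, add the edge $v_1v_2$; for the distance-$2$ orbit, add one or two new internally disjoint paths (of lengths $2$ and, if needed, $3$) between $v_1$ and $v_2$, thereby forcing the $C$-distance between $\psi(v_1)$ and $\psi(v_2)$ to be at most~$2$ and, if not already guaranteed by $d_G(v_1,v_2)$, also forcing $\psi(v_1)\neq\psi(v_2)$. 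In every case the gadget is designed so that any $C$-homomorphism of $G^+$ restricts to a $C$-homomorphism of $G$ whose image at $(v_1,v_2)$ lies in the prescribed orbit.

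It remains to show that $G^+$ admits a $C$-homomorphism. I would argue by contradiction: if not, $G^+$ contains a $5/2$-critical subgraph $H$, which must involve the gadget since $G$ itself is $C$-colorable by Corollary~\ref{cor-girth10}. Undoing the gadget in $H$ yields a subgraph $H'$ of $G$; because $5/2$-critical graphs are $2$-connected, $H'$ is connected, and a short case analysis using the lower bound on $d_G(v_1,v_2)$ both rules out $H=C_3$ and shows that $H'$ contains a cycle. Therefore $H'$ is a connected planar graph of girth at least~$10$ containing a cycle, and Euler's formula (essentially the planar case of Observation~\ref{obs-dense}) gives $p(H')\ge 10$. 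A direct calculation shows that each of the gadgets changes the potential by at most~$5$, so $p(H)\ge 10-5=5>2$, contradicting Theorem~\ref{thm-mainsimp}.

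The main obstacle is carrying out the ``undo'' step carefully, particularly for the identification gadget, where one must split the merged vertex of $H$ back into $v_1$ and $v_2$ with neighbors correctly redistributed according to the edges of $G$, and also for the distance-$2$ orbit where two path gadgets may be attached simultaneously. The quantitative content of the argument is the gap of~$8$ between the planar lower bound $p\ge 10$ from Observation~\ref{obs-dense} and the critical upper bound $p\le 2$ from Theorem~\ref{thm-mainsimp}; this gap absorbs the cost of any of the gadgets, which is the substance of the remark preceding the statement that ``the difference between the bounds \dots is rather large in the planar case.''
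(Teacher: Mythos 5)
Your proposal is correct in outline, and its engine is the same as the paper's: augment $G$, take a $5/2$-critical subgraph of the augmented graph, undo the augmentation inside that subgraph, and play the planar girth-$10$ bound $p\ge 10$ of Observation~\ref{obs-dense} against the bound $p\le 2$ of Theorem~\ref{thm-mainsimp}, finishing the degenerate case by noting which very small graphs are not $5/2$-critical. Where you differ is the augmentation itself. The paper does not reduce modulo $\mathrm{Aut}(C)$: for $c_1\neq c_2$ it glues the cycle $C$ itself onto $G$ by identifying $v_1$ with $c_1$ and $v_2$ with $c_2$ (plausibility guarantees triangle-freeness), so the precoloring is encoded exactly and all distance cases are handled uniformly; the potential bookkeeping is then a single identity $p(G_2)=p(G_2')+p(G_2'')-10$ with $p(G_2'')\ge 5$ because the surviving part of $C$ is a path or all of $C$. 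Your orbit reduction plus edge/path/identification gadgets (the identification gadget being exactly the paper's $c_1=c_2$ case) is a legitimate alternative: the gadgets are smaller, but you pay with a case analysis over $d_G(v_1,v_2)$ and per-gadget checks of triangle-freeness (including suppressing parallel edges when you identify vertices at distance two) and of the forcing of the correct orbit; the cost estimate ``at most $5$'' is right for every gadget, so the gap $10-2=8$ indeed absorbs it, just as the paper's gluing costs $p(G_2'')-10\ge -5$. One step of your sketch is stated too strongly: the distance lower bound does not by itself show that $H'$ contains a cycle. The correct statement is that if $H'$ is acyclic then, since $H$ is $2$-connected with minimum degree two, $H$ must be a cycle, a theta graph, or the $5$-cycle formed by your two path gadgets, and one then invokes (as the paper does explicitly at the end of its proof) that no cycle other than $C_3$ and no theta graph is $5/2$-critical, with $C_3$ excluded by triangle-freeness; similarly, connectivity of $H'$ after splitting the identified vertex needs the small extra remark that both split vertices retain an edge of $H$, since otherwise $H$ would be isomorphic to a subgraph of $G$, contradicting Corollary~\ref{cor-girth10}. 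With those two points made precise, your argument goes through.
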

\begin{proof}
Suppose first that $c_1\neq c_2$, and thus $v_1\neq v_2$.
Let $G_1$ be the graph obtained from $G$ and $C$ by identifying $v_1$ with $c_1$, and $v_2$ with $c_2$.  Since $(c_1,c_2)$ is plausible for $(v_1,v_2)$, the graph $G_1$ is triangle-free.
If $G_1$ has a homomorphism $\psi$ to $C$, then without loss of generality we can assume that the restriction of $\psi$ to $C$ is the identity, and the restriction of $\psi$ to $G$
satisfies $\psi(v_1)=c_1$ and $\psi(v_2)=c_2$ as required.

Suppose now for a contradiction that no such homomorphism exists.  Let $G_2$ be a $5/2$-critical subgraph of $G_1$, and let $G_2'=G_2\cap G$ and $G_2''=G_2\cap C$.
By Corollary~\ref{cor-girth10}, $G_2'$ has a homomorphism to $C_5$, and since $G_2$ has no such homomorphism, it follows that $G_2''$ is either equal to $C$ or to a path between $c_1$ and $c_2$ in $C$.
In either case, $p(G''_2)\ge 5$.  If $G'_2$ is not a tree, then each face of an embedding of $G'_2$ in the plane has length at least $10$, and Observation~\ref{obs-dense} implies that 
$p(G'_2)\ge 10$.  However, then $p(G_2)=p(G'_2)+p(G''_2)-p(G'_2\cap G''_2)=p(G'_2)+p(G''_2)-10\ge 5$, which contradicts Theorem~\ref{thm-mainsimp}.

We conclude that $G'_2$ is a tree.  Since $G_2$ is $5/2$-critical, it does not contain vertices of degree one, and thus $G'_2$ is a path joining $v_1$ with $v_2$.  Consequently, $G_2$ is either a cycle or a theta graph.
However, it is easy to check that no cycle other than triangle and no theta graph is $5/2$-critical, which is a contradiction.

The case that $c_1=c_2$ is dealt with similarly, starting with letting $G_1$ be the graph obtained from $G$ by identifying $v_1$ with $v_2$.
\end{proof}

Theorem~\ref{thm-mainsimp} is tight.  For the purposes of the proof, we also need to list the $5/2$-critical graphs whose potential is exactly $2$.
Let $E_1$ be the graph obtained from a $9$-cycle $v_1\ldots v_9$ by adding a vertex adjacent to $v_1$, $v_4$, and $v_7$.
Let $E_2$ be the graph obtained from two intersecting $5$-cycles $uvx_1x_2x_3$ and $uvy_1y_2y_3$ by adding a path
$x_2z_1z_2y_2$.  See Figure~\ref{fig-except}.  Note that both $E_1$ and $E_2$ have $10$ vertices and $12$ edges,
and thus $p(E_1)=p(E_2)=2$.  A stronger form of our result is the following.

\begin{figure}
\centering{\includegraphics{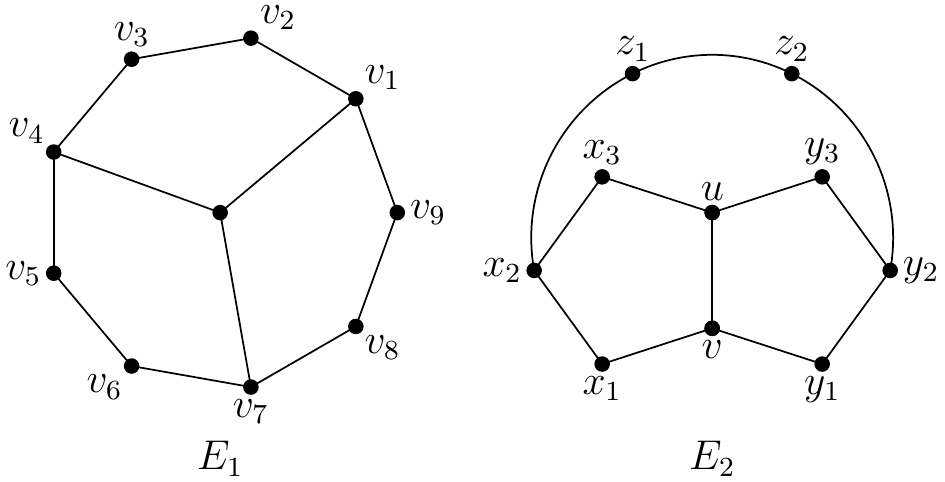}}
\caption{Exceptional graphs.}\label{fig-except}
\end{figure}

\begin{theorem}\label{thm-main}
If $G$ is a $5/2$-critical graph distinct from $C_3$, $E_1$ and $E_2$, then $p(G)\le 1$.
\end{theorem}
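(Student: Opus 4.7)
The plan is to proceed by induction on $n(G)+e(G)$, proving Theorem~\ref{thm-main} directly (Theorem~\ref{thm-mainsimp} follows as an immediate corollary); the stronger statement with its explicit list of three exceptions is precisely the right inductive hypothesis, because reductions naturally produce smaller $5/2$-critical subgraphs whose potential might be exactly $2$, and one needs to know whether those subgraphs are $C_3$, $E_1$, or $E_2$. Let $G$ be a minimum counterexample, i.e.\ a $5/2$-critical graph different from $C_3$, $E_1$, $E_2$ with $p(G)\ge 2$. Routine criticality arguments first establish the basic structural facts: $G$ is triangle-free (else $C_3\subsetneq G$ contradicts criticality), $\delta(G)\ge 2$, and $G$ is $2$-connected.

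The engine is a reducibility framework built around the potential $p$. For a proper subgraph $H\subsetneq G$ with boundary $B$ (the vertices of $H$ having a neighbor in $G-V(H)$ plus the endpoints of edges in $E(G)\setminus E(H)$ incident with $V(H)$), every homomorphism of $G-E(H)$ to $C_5$ restricts to an assignment on $B$, and since $G$ itself is uncolorable, some such restrictions must fail to extend across $H$. The collection of failing patterns is a \emph{demand list} on $B$. Replacing $H$ by a smaller graph $H'$ on the same boundary that realizes the same demand list produces a smaller non-$5/2$-colorable graph $G'$, whose $5/2$-critical subgraph $G''$ satisfies $p(G'')\le 2$ by induction, with equality only when $G''\in\{C_3,E_1,E_2\}$. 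Combining this with a potential identity of the form $p(G)=p(G'')+p(H)-p(H')-c_B$, where $c_B$ depends only on the identifications on $B$, forces a very tight gap and pins down the structure of $H$.

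The technical body of the proof is then to list a collection of reducible configurations (short paths of degree-$2$ vertices, $2$- and $3$-cuts separating small-potential pieces, and specific low-potential near-critical subgraphs on few vertices), show they are all absent from $G$, and finally use the residual structure to conclude $p(G)\le 1$, the desired contradiction. The main obstacle is the bookkeeping in the reduction step: for each boundary $B$ of small size one must enumerate, up to rotations and reflections of $C_5$, every demand list that a homomorphism of a subgraph of $G$ to $C_5$ can produce, and for each find a cheap realizer $H'$ with $p(H')$ strictly smaller than $p(H)$. The rigid propagation of $C_5$-colors at distances two and three (the same rigidity reflected in the ``plausible'' condition of Corollary~\ref{cor-precol}) makes this delicate, and the hardest subcases are those in which the critical subgraph $G''$ produced by a reduction is isomorphic to $E_1$ or $E_2$: there one must either unwind the reduction to identify $G$ itself with one of the listed exceptions, or engineer a secondary reduction—which is precisely the reason the refined three-exception statement is the inductive hypothesis that closes the argument.
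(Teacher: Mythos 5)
Your overall plan---induction on a smallest counterexample, a potential-based reduction producing smaller $5/2$-critical subgraphs, and carrying $C_3,E_1,E_2$ as explicit exceptions in the inductive hypothesis---is indeed the spirit of the paper (which follows Kostochka--Yancey). But two essential steps are missing or would fail as stated. First, the ``demand list plus cheap realizer $H'$'' framework is not justified: nothing guarantees that for a given boundary $B$ there exists a smaller graph $H'$ on $B$ realizing exactly the same set of non-extendable boundary patterns, let alone one with smaller potential, and the identity $p(G)=p(G'')+p(H)-p(H')-c_B$ does not hold in general because the critical subgraph $G''$ of the reduced graph need not contain all of $H'$. The paper's Lemma~\ref{lemma-light} sidesteps this entirely: instead of engineering a gadget, it takes a homomorphism $\psi$ of $H$ to a subgraph $F$ of $C_5$ and identifies the vertices of $H$ according to $\psi$, so the ``realizer'' is always $F\subseteq C_5$ itself, the accounting $p(G_3)=p(G_2)-p(F_2)+p(H)$ is exact, and the whole difficulty is pushed into classifying the few ways ($X(H)$: added paths, the $Q$-configurations, and $E_1,E_2$-extensions) in which the potential bound can be tight. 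Your proposal contains no substitute for this mechanism, and for homomorphisms to $C_5$ the enumeration of achievable demand lists with cheap realizers is exactly the part one cannot wave through.

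Second, the concluding step ``use the residual structure to conclude $p(G)\le 1$'' has no content. In the paper this is a genuine discharging argument: initial charge $10-4\deg(v)$, charge pushed along strings, and---crucially---charge pooled on $5$-cycles (cells), which is only possible after proving that $5$-cycles are pairwise vertex-disjoint (Lemma~\ref{lemma-disj5}), that there are no $6$- or $8$-cycles (Lemmas~\ref{lemma-no6} and \ref{lemma-noeven}), and a battery of weight bounds for $(2,2,k)$-, $(2,1,1,1)$-, $(2,2,1,1)$-vertices and for cells of degree $3$ and $4$ (Corollary~\ref{cor-no22x}, Lemmas~\ref{lemma-wt4}, \ref{lemma-111}, \ref{lemma-2111}, \ref{lemma-wtcell3}, \ref{lemma-wtcell4}). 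The configurations you list (degree-two paths, small cuts) are not the ones that matter here, and you give no global counting that would turn their absence into $p(G)\le 1$. Moreover, several of the structural lemmas repeatedly invoke a base-case fact that the paper verifies by computer (Lemma~\ref{lemma-nosmall}: no smallest counterexample with $p=2$ has at most $21$ vertices); your sketch offers no replacement for this finite check, and it is needed precisely in the delicate cases where the reduced critical subgraph is $E_1$ or $E_2$. As written, the proposal states a strategy but does not constitute a proof.
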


In the following sections, we give a proof of Theorem~\ref{thm-main}.  
We say that $G$ is a \emph{smallest counterexample} if $G$ is $5/2$-critical, $G\not\in\{C_3, E_1, E_2\}$, $p(G)\ge 2$
and $p(H)\le 1$ for every $5/2$-critical graph $H\not\in\{C_3, E_1, E_2\}$ satisfying $n(H)<n(G)$.  Clearly, Theorem~\ref{thm-main} holds unless
there exists a smallest counterexample.  

Our proof is heavily influenced by the method of Kostochka and Yancey~\cite{koyan} for bounding the density of
$4$-critical graphs---in Lemma~\ref{lemma-light}, we show that almost any proper subgraph $H$ of a smallest counterexample $G$
has potential at least $6$ (i.e., it is relatively sparse).  Consequently, we can argue that specific graphs derived from $H$ (say by adding an extra edge)
have potential at least $2$, and from the minimality of $G$, we derive that they have a homomorphism to $C_5$.  This is useful in many of the arguments
that prove non-existence of particular configurations in a smallest counterexample.

In Section~\ref{sec-prop}, we establish several structural properties of a hypothetical smallest counterexample,
and in Section~\ref{sec-disch}, we disprove its existence by a discharging argument.  An important idea is to consider charges of vertices in each $5$-cycle
in a smallest counterexample together.  This is made possible by observing that the $5$-cycles in a smallest counterexample are pairwise vertex-disjoint
(Lemma~\ref{lemma-disj5}).  Furthermore, we can argue that the neighborhoods of $5$-cycles are relatively dense (Lemmas~\ref{lemma-wtcell3} and \ref{lemma-wtcell4}).
On the other hand, vertices that do not belong to any $5$-cycle are easier to deal with by themselves, since e.g. identifying some of their neighbors to
a single vertex does not create a triangle.  By exploiting this, we can argue about the density of their neighborhoods (Lemmas~\ref{lemma-opp5c} and its Corollaries~\ref{cor-no22x} and \ref{obs-v3}, Lemmas~\ref{lemma-wt4}, \ref{lemma-111} and \ref{lemma-2111}).

\section{The properties of a smallest counterexample}\label{sec-prop}

For a graph $H$, let $P_n(H)$ denote the set of graphs obtained from $H$ by adding a path of length $n$ joining
two distinct vertices of $H$.  Let $Q(H)$ denote the set of graphs obtained from $H$ by adding a vertex and joining it to three distinct
vertices of $H$ by paths with $k_1$, $k_2$, and $k_3$ edges, where $(k_1,k_2,k_3)\in\{(1,3,3),(2,2,3)\}$.  For $i\in\{1,2\}$, let $E_i(H)$ denote the set of graphs obtained from $H$ and $E_i$ as follows:
select a vertex $v\in V(E_i)$ of degree $k\in\{2,3\}$ and split $v$ to $k$ vertices $v_1$, \ldots, $v_k$ of degree one.
Let $u_1$, \ldots, $u_k$ be vertices of $H$, not all equal.  For $i\in\{1,\ldots, k\}$, identify $u_i$ with $v_i$.
See Figure~\ref{fig-excext}.  Let $X(H)=\{H\}\cup P_2(H)\cup P_3(H)\cup Q(H)\cup E_1(H)\cup E_2(H)$.

\begin{figure}
\centering{\includegraphics{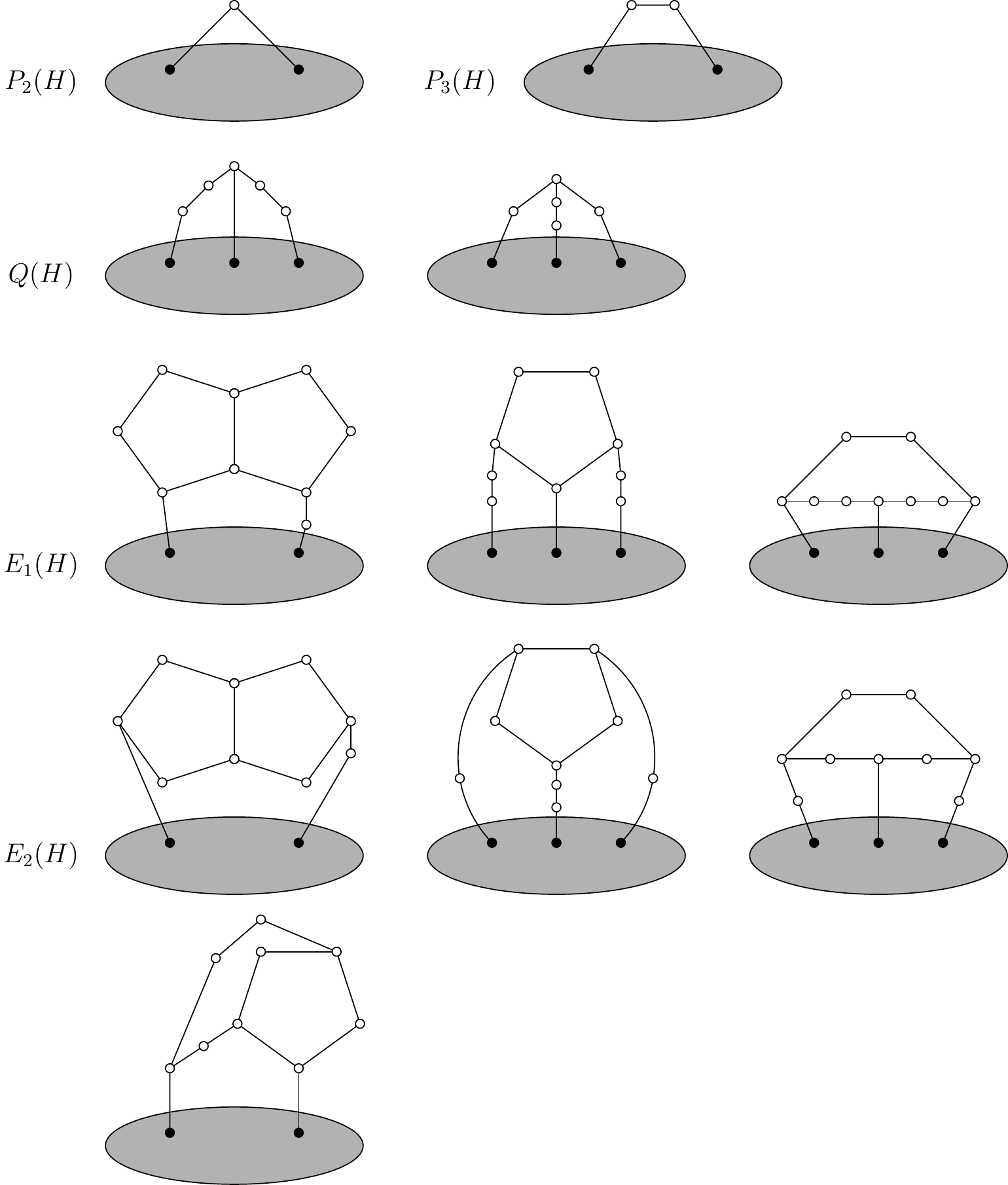}}
\caption{Configurations from Lemma~\ref{lemma-light}.}\label{fig-excext}
\end{figure}

\begin{lemma}\label{lemma-light}
Let $G$ be a smallest counterexample and let $H$ be a subgraph of $G$.  Then
\begin{itemize}
\item $p(H)\ge 2$ if $H=G$,
\item $p(H)\ge 4$ if $G\in P_3(H)$,
\item $p(H)\ge 5$ if $G\in P_2(H)\cup Q(H)\cup E_1(H)\cup E_2(H)$,
\item $p(H)=5$ if $n(H)=1$ or $H=C_5$, and
\item $p(H)\ge 6$ otherwise.
\end{itemize}
\end{lemma}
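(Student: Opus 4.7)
The first three bullets and the two equalities reduce to direct arithmetic. Each listed construction appends a fixed number of vertices and edges to $H$: $P_3(H)$ adds $2$ vertices and $3$ edges, giving $p(G)-p(H)=-2$; while $P_2(H)$, $Q(H)$ and $E_i(H)$ add respectively $(1,2)$, $(5,7)$ and $(9,12)$ vertex/edge pairs, giving $p(G)-p(H)=-3$ in each case. Combining with $p(G)\ge 2$, which holds by the definition of smallest counterexample, yields the bounds $p(H)\ge 4$ and $p(H)\ge 5$. For the fourth bullet, $p(K_1)=5$ and $p(C_5)=25-20=5$ are immediate.

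For the ``otherwise'' bullet I would argue by contradiction. Suppose $H\subsetneq G$ has $n(H)\ge 2$, $H\ne C_5$, $G\notin P_2(H)\cup P_3(H)\cup Q(H)\cup E_1(H)\cup E_2(H)$, and $p(H)\le 5$. Since $G$ is $5/2$-critical and $H$ is a proper subgraph, $H$ admits a homomorphism $\varphi\colon H\to C$ onto some fixed copy $C$ of $C_5$. Form a quotient $G^*$ by identifying each $v\in V(H)$ with $\varphi(v)\in V(C)$, adjoining the five edges of $C$, and deleting any resulting multi-edges. One checks that $n(G^*)=n(G)-n(H)+5$ and $e(G^*)\le e(G)-e(H)+5$, so
\[
  p(G^*)\ge p(G)-p(H)+5\ge 2.
\]
Any $C_5$-homomorphism of $G^*$ would, combined with $\varphi$, yield a $C_5$-homomorphism of $G$, contradicting criticality; so $G^*$ admits no such homomorphism and therefore contains a $5/2$-critical subgraph $F$. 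When $n(H)\ge 6$, we have $n(F)\le n(G^*)<n(G)$, and minimality of $G$ forces $F\in\{C_3,E_1,E_2\}$ or $p(F)\le 1$.

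The heart of the argument---and the main obstacle---is the case analysis that lifts $F$ back to $G$ to recognise one of the forbidden configurations. Each vertex of $F$ in $V(C)$ has a nonempty preimage in $V(H)$, and each edge of $F$ with both endpoints in $V(C)$ is explained either by an edge of $H$ between those preimages or by an edge of $C$ that was freshly adjoined, the latter forcing a corresponding path in $H$ between preimages of distinct $C$-vertices. Tracing this through: $F=C_3$ forces $G$ to contain a path of length $2$ or $3$ between two vertices of $H$, placing $G\in P_2(H)\cup P_3(H)$; $F=E_i$ reproduces precisely the $E_i(H)$ construction; and the remaining possibility $p(F)\le 1$ is handled by invoking the classification of very dense small $5/2$-critical graphs and matching the attachment of $F$ to $V(C)$ with one of $P_2(H)$, $P_3(H)$ or $Q(H)$. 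The small regime $2\le n(H)\le 5$, where $G^*$ need not be smaller than $G$, must be handled separately: using that $G$ is triangle-free (since the only triangle-containing $5/2$-critical graph is $C_3$), a short enumeration leaves essentially only $H=C_4$, $H=K_{2,3}$, and a few other five-vertex triangle-free graphs, each of which is handled by hand to force $G\in X(H)\setminus\{H\}$, again a contradiction. The technical essence of the proof is the careful bookkeeping for multi-edge collapse in $G^*$ and the matching of walks through $V(C)$ to paths of the prescribed lengths in $H$.
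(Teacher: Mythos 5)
Your arithmetic for the first four bullets matches the paper, but the ``otherwise'' case has a genuine gap: you never make the extremal choice of $H$, and that choice is what makes the lift-back work. The paper takes a violating subgraph $H$ (proper, not in a $5$-cycle, $G\notin X(H)$, $p(H)\le 5$) with $n(H)+e(H)$ \emph{maximum}; then, after finding a critical subgraph $G_2$ of the quotient and replacing $F_2=F\cap G_2$ by $H$, it obtains a subgraph $G_3\supseteq H$ of $G$ with $p(G_3)=p(G_2)-p(F_2)+p(H)\le p(H)-3$, so maximality yields $G\in X(G_3)$, hence $p(G_3)\ge 2$ (and $\ge 4$ unless $G=G_3$), and the identity $p(H)=p(G_3)+p(F_2)-p(G_2)\le 5$ forces $G=G_3$, $p(F_2)=5$ (so $F_2$ is a single vertex or a full $C_5$) and $G_2\in\{E_1,E_2\}$; only then does $G\in Q(H)\cup E_1(H)\cup E_2(H)$ follow. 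Without this device your ``tracing through'' is simply false as stated: a triangle in the quotient only shows that $G$ \emph{contains} a path of length $2$ or $3$ between two vertices of $H$, not that $G\in P_2(H)\cup P_3(H)$ (the paper instead passes to $H+P$, invokes maximality to get $G\in X(H+P)$, and deduces $p(H)\ge 6$); likewise a critical $E_i$ in the quotient only shows that $G$ contains a lifted configuration attached to $H$ --- the critical subgraph may meet the $5$-cycle in $2$, $3$ or $4$ vertices and $G$ may have material beyond the lifted graph --- so it does not ``reproduce precisely the $E_i(H)$ construction.'' Your remaining case $p(F)\le 1$ leans on ``the classification of very dense small $5/2$-critical graphs,'' which does not exist and is not needed: in the paper that case dies by the same potential identity, since it would give $p(H)\ge 2+5-1=6$.

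A second, smaller but real, defect is your choice of quotient. You identify $H$ onto a full $C_5$ (adding all five edges of $C$), which creates the regime $n(H)\le 5$ where the quotient need not be smaller than $G$; you defer this to an unspecified hand enumeration ($C_4$, $K_{2,3}$, etc.), and that is not routine --- in particular, excluding $C_4\subseteq G$ at this stage cannot use the girth lemma, because in the paper the girth bound is itself a consequence of the present lemma. The paper sidesteps the whole issue by mapping $H$ onto its image $F$, a subgraph of $C_5$ with $n(F)<n(H)$ (possible because $H\not\subseteq C_5$ forces any homomorphism to be non-injective when $n(H)\le 5$), adding no edges not coming from $G$, so the quotient is always smaller than $G$ and every edge of it corresponds to an edge of $G$. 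As written, your proof is not repairable without importing both the extremal choice of $H$ and the image-quotient (or an equivalent mechanism), so the key ideas of the argument are missing rather than merely compressed.
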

\begin{proof}
Note that the claim holds when $H\subseteq C_5$ or $H=G$.  Furthermore, $2\le p(G)=p(H)+5(n(G)-n(H))-4(e(G)-e(H))$.  We have
$$(n(G)-n(H),e(G)-e(H))=\begin{cases}
(1,2)&\text{if $G\in P_2(H)$}\\
(2,3)&\text{if $G\in P_3(H)$}\\
(5,7)&\text{if $G\in Q(H)$}\\
(9,12)&\text{if $G\in E_1(H)\cup E_2(H)$,}\\
\end{cases}$$
and thus the claim holds if $G\in X(H)$.

For a contradiction, assume that $G$ contains a proper subgraph $H$ such that $G\not\in X(H)$, $H\not\subseteq C_5$
and $p(H)\le 5$.  Choose such a subgraph with $n(H)+e(H)$ as large as possible.
If $H$ is not an induced subgraph, consider an edge $e\in E(G)\setminus E(H)$ joining two vertices of $H$.
Note that $p(H+e)=p(H)-4<p(H)$, and by the maximality of $H$, we conclude that $G\in X(H+e)$.
Hence, $p(H)=p(H+e)+4\ge 6$.  This is a contradiction, and thus $H$ is an induced subgraph of $G$.

Since $G$ is $5/2$-critical, $n(H)<n(G)$ and $H$ is not a subgraph of a $5$-cycle,
it follows that $H$ has a homomorphism $\psi$ to a subgraph $F$ of a $5$-cycle satisfying $n(F)<n(H)$.
Let $G_1$ be the graph obtained from $G$ by identifying vertices of $H$ with vertices of $F$ according to $\psi$.
If parallel edges arise, for each pair of vertices we delete all but one (arbitrary) edge joining them;
thus $G_1$ is simple and each edge of $G_1$ corresponds to a unique edge of $G$.
Since $G$ has no homomorphism to $C_5$, it follows that $G_1$ has no homomorphism to $C_5$,
and thus it contains a $5/2$-critical subgraph $G_2$.

Suppose first that $G_2$ is a triangle.  It follows that $G$ contains a path $P$ of length $n\in \{2,3\}$ with
endvertices in $H$ and with internal vertices in $V(G)\setminus V(H)$.  
Note that $p(H+P)=p(H)+5(n-1)-4n\le p(H)-2<p(H)$, and thus by the maximality of $n(H)+e(H)$,
we conclude that $G\in X(H+P)$.  
Since $G\not\in P_n(H)$, we have $H+P\neq G$, and thus $p(H+P)\ge 4$.
However, then $p(H)\ge p(H+P)+2\ge 6$, which is a contradiction.

Therefore, $G_2\neq C_3$, and since $G$ is a smallest counterexample, we have $p(G_2)\le 2$, and $p(G_2)\le 1$ if $G_2\not\in\{E_1,E_2\}$.
Let $G_3$ be the graph obtained from $G_2$ by replacing the subgraph $F_2=F\cap G_2$ by $H$ (redirecting the edges which join vertices of $V(G_2)\setminus V(F_2)$
with $F_2$ to the appropriate vertices of $H$, but not adding any other edges between $V(H)$ and $V(G_2)\setminus V(F_2)$).
Note that $F_2$ is non-empty, since $G$ cannot contain a proper $5/2$-critical subgraph.
Since $F_2$ is a subgraph of a $5$-cycle, we have $p(F_2)\ge 5$, and $p(F_2)\ge 6$ unless $n(F_2)=1$ or $F_2=C_5$.
Observe that $p(G_3)=p(G_2)-p(F_2)+p(H)\le p(H)-3<p(H)$.  By the minimality of $H$, we conclude that $G\in X(G_3)$.
However, then $p(G_3)\ge 2$, and $p(G_3)\ge 4$ unless $G=G_3$.

Since $5\ge p(H)=p(G_3)+p(F_2)-p(G_2)$, it follows that $G=G_3$,
$n(F_2)=1$ or $F_2=C_5$, and $G_2\in\{E_1,E_2\}$.  However, if $F_2=C_5$, then this implies that $G\in Q(H)$, and
if $n(F_2)=1$, then $G\in E_1(H)\cup E_2(H)$.  This is a contradiction.
\end{proof}

\begin{lemma}\label{lemma-girth}
Each smallest counterexample has girth at least $5$.
\end{lemma}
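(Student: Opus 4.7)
The plan is to rule out triangles and $4$-cycles separately in a smallest counterexample $G$.

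Triangles are immediate. Since $C_3$ has no homomorphism to the triangle-free graph $C_5$, any triangle $T\subseteq G$ is a subgraph without such a homomorphism, so the $5/2$-criticality of $G$ forces $G=T=C_3$, contradicting $G\ne C_3$.

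For $4$-cycles, I would take $H\subseteq G$ to be a $4$-cycle and apply Lemma~\ref{lemma-light}. Here $p(H)=5\cdot 4-4\cdot 4=4$, $H\ne C_5$, $n(H)>1$, and $H$ is not a subgraph of $C_5$ (any four vertices of $C_5$ induce $P_3$, which has only three edges, so $C_4\not\subseteq C_5$). Each clause of the lemma apart from the $P_3(H)$ clause requires $p(H)\ge 5$ or $p(H)\ge 6$, and the case $G=H$ is excluded because $C_4$ is bipartite and hence homomorphic to $C_5$. The only surviving option is $G\in P_3(H)$, i.e., $G$ is a $4$-cycle together with a path of length three whose endpoints are two distinct vertices of that $4$-cycle.

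It then remains to produce a homomorphism to $C_5$ for each graph in $P_3(C_4)$, contradicting the $5/2$-criticality of $G$. Up to the symmetry of $C_4$, the attached path joins either two opposite or two adjacent vertices of the $4$-cycle, giving two subcases. For $C_4=v_1v_2v_3v_4$ with added path $v_1xyzv_3$ (opposite case), label $V(C_5)=\{1,\dots,5\}$ cyclically and take $(v_1,v_2,v_3,v_4,x,y,z)\mapsto(1,2,3,2,5,1,2)$; for the added path $v_1xyzv_2$ (adjacent case), take $(v_1,v_2,v_3,v_4,x,y,z)\mapsto(1,2,3,2,5,4,3)$. A direct edge check shows both are homomorphisms.

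The only step with any content is this final two-case verification, which is mechanical; I do not anticipate a genuine obstacle.
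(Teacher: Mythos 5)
Your argument follows the paper's proof almost verbatim: triangles are excluded by criticality together with $G\neq C_3$, and a $4$-cycle $K$ has $p(K)=4$, so Lemma~\ref{lemma-light} (with $G\neq K$ since $C_4$ is bipartite) forces $G\in P_3(K)$, which is then ruled out; the paper simply asserts that no graph in $P_3(C_4)$ is $5/2$-critical, while you exhibit explicit homomorphisms. The one flaw is in that final check: $P_3(H)$ means attaching a path with \emph{three} edges, i.e.\ two internal vertices, but your paths $v_1xyzv_3$ and $v_1xyzv_2$ have three internal vertices and four edges, so the homomorphisms you verify are for graphs in $P_4(C_4)$, not $P_3(C_4)$. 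This is easily repaired and does not affect the validity of the approach: in the adjacent case $C_4$ plus the path $v_1xyv_2$ has only even cycles, hence is bipartite and maps to an edge of $C_5$; in the opposite case $C_4$ plus $v_1xyv_3$ is the theta graph with paths of lengths $2,2,3$, and $(v_1,v_2,v_3,v_4,x,y)\mapsto(1,2,3,2,5,4)$ is a homomorphism to $C_5=1\,2\,3\,4\,5$. With that correction your proof is complete and matches the paper's.
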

\begin{proof}
Suppose for a contradiction that a smallest counterexample $G$ contains a cycle $K$ of length at most $4$.  Since $G\neq C_3$ and $G$ is $5/2$-critical,
it follows that $K$ is a $4$-cycle and $G\neq K$.  However, $p(K)=4$, and thus Lemma~\ref{lemma-light} implies that
$G\in P_3(K)$.  However, no such graph is $5/2$-critical.
\end{proof}

Throughout the paper, we often need to show that graphs derived from $E_1$ and $E_2$ by various operations (e.g., splitting vertices
and attaching specific subgraphs) are not smallest counterexamples.  While it is possible in each case to perform the necessary case
analysis by hand (and indeed, we did that when producing this proof), writing out all the details would make the paper substantially
longer without giving the reader any additional insight.  Hence, we opt to exclude such exceptional graphs at once using computer search, instead.

\begin{lemma}\label{lemma-nosmall}
If $G$ is a smallest counterexample and $p(G)=2$, then $G$ has at least $22$ vertices.
\end{lemma}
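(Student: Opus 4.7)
The plan is to carry out a finite computer enumeration, in line with the authors' stated strategy in the preceding paragraph. The equation $p(G) = 5n(G) - 4e(G) = 2$ forces $5n(G)\equiv 2\pmod 4$, hence $n(G) \equiv 2 \pmod 4$; it therefore suffices to rule out $5/2$-critical graphs $G \notin \{C_3, E_1, E_2\}$ on $n \in \{2, 6, 10, 14, 18\}$ vertices with the corresponding edge counts $(5n-2)/4 \in \{2, 7, 12, 17, 22\}$.

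The two smallest cases are dispatched by hand. A simple graph on $n = 2$ vertices has at most one edge, excluding this case outright. For $n = 6$, Lemma~\ref{lemma-girth} gives girth at least $5$, and any $5/2$-critical graph distinct from $C_3$ has minimum degree at least $2$ (a pendant vertex can always be colored after coloring $G-e$, since $C_5$ is vertex-transitive of degree $2$); a brief case analysis on the maximum degree then shows that no such graph has $7$ edges.

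For $n \in \{10, 14, 18\}$ I would rely on a computer enumeration. Using a standard generator such as \texttt{geng} from \texttt{nauty}, one enumerates all connected graphs on $n$ vertices with girth at least $5$, minimum degree at least $2$, and exactly $(5n-2)/4$ edges. For each candidate $G$, $5/2$-criticality is verified directly: one checks that $G$ admits no homomorphism to $C_5$, and that $G - e$ does for every $e \in E(G)$. Since $C_5$ has only five vertices, each homomorphism test is cheap. The expected output is exactly $\{E_1, E_2\}$ for $n = 10$ and empty for $n \in \{14, 18\}$, which completes the proof.

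The principal obstacle is managing the combinatorial explosion at $n = 18$, where the unpruned pool of $22$-edge graphs is sizable. Lemma~\ref{lemma-light} provides substantial pruning: for any proper subgraph $H$ of $G$ with $H \not\subseteq C_5$ and $p(H) \leq 5$, the graph $G$ must belong to one of the specific families $P_2(H)$, $P_3(H)$, $Q(H)$, $E_1(H)$, or $E_2(H)$, which severely restricts the allowed local structure of any would-be counterexample. Incorporating these forbidden-configuration checks during generation (e.g.\ via an incremental extension procedure), together with the girth and minimum-degree filters, should keep the search comfortably tractable even at $n = 18$.
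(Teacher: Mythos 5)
Your proposal follows essentially the same route as the paper: the same integrality argument restricting $n(G)$ to $\{6,10,14,18\}$ (the paper discards $n=2$ implicitly), the girth bound from Lemma~\ref{lemma-girth} together with a degree/connectivity filter (the paper uses $2$-connectedness where you use minimum degree at least two), and a \texttt{geng}-based enumeration with direct verification of $5/2$-criticality via homomorphism tests to $C_5$. Your by-hand treatment of $n\in\{2,6\}$ and the suggested pruning via Lemma~\ref{lemma-light} are minor variations, not a different argument.
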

\begin{proof}
Suppose that $G$ is a smallest counterexample with at most $21$ vertices and $p(G)=2$,
i.e., $e(G)=\frac{5n(G)-2}{4}$.  By integrality, it follows that $n(G)\in\{6,10,14,18\}$.
By Lemma~\ref{lemma-girth}, $G$ has girth at least $5$.  Furthermore, since $G$ is $5/2$-critical,
it is $2$-connected.  Using the program \textbf{geng} of Brendan McKay, we enumerated all such graphs,
and verified that the only $5/2$-critical graphs satisfying the conditions are $E_1$ and $E_2$.
The source code of the program we used can be found at \url{http://atrey.karlin.mff.cuni.cz/~rakdver/circul54.c}.
\end{proof}

We also need the following well-known fact.

\begin{observation}\label{obs-mincyc}
If a graph $H$ contains at least two distinct cycles, but every proper subgraph of $H$ contains
at most one cycle, then $H$ is either a union of two cycles intersecting in at most one vertex,
or a theta graph.
\end{observation}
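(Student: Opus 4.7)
The plan is to prove the observation by elementary structural reductions, followed by a case analysis on connectivity, with the hardest sub-case handled via an ear decomposition.

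First, I would observe that $H$ contains no isolated vertex, no vertex of degree one, and no bridge, since in each case the offending vertex or edge can be removed to yield a proper subgraph still containing both cycles, contradicting the minimality hypothesis. Hence $\delta(H)\ge 2$ and every edge of $H$ lies on a cycle. Letting $C_1, C_2$ be two distinct cycles in $H$, the subgraph $C_1\cup C_2$ already contains them both, so the hypothesis forces $H = C_1\cup C_2$.

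Next, I would split on whether $H$ is connected. If $H$ is disconnected, each component is bridgeless with minimum degree two, hence contains a cycle; by minimality each contains at most one (else the component alone would be a proper two-cycle subgraph). A connected bridgeless graph with $\delta\ge 2$ and a unique cycle $C$ coincides with $C$: any vertex outside $C$ would have all incident edges on cycles and hence on $C$, placing the vertex on $C$, a contradiction; similarly any extra edge would create a second cycle. Thus each component of $H$ is a single cycle, and the two-cycle hypothesis together with minimality forces exactly two components, so $H$ is the disjoint union of two cycles (the first alternative).

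In the connected case, set $I = V(C_1)\cap V(C_2)$; connectedness forces $I\ne\emptyset$. When $|I|=1$, the single vertex of $I$ is a cut vertex where $C_1$ and $C_2$ meet, giving the first alternative. When $|I|\ge 2$, I claim $H$ is $2$-connected: for any vertex $v$, the pieces $C_1-v$ and $C_2-v$ each still contain a vertex of $I\setminus\{v\}$ (nonempty since $|I|\ge 2$), so they glue along any such shared vertex and $H-v$ remains connected. Invoking an (open) ear decomposition $H = C'\cup E_1\cup\cdots\cup E_k$ (a cycle $C'$ plus $k$ ears), the case $k=0$ would make $H$ a single cycle, contradicting the two-cycle hypothesis, and the case $k\ge 2$ would make $C'\cup E_1$ a proper subgraph of $H$ that is a theta graph and hence contains three distinct cycles, contradicting the minimality hypothesis. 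Therefore $k=1$, so $H$ is itself a theta graph.

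The main obstacle is the case $|I|\ge 2$, where a naive case analysis on how the two cycles overlap (e.g., how many maximal shared paths they have, or the degrees of shared vertices) grows cumbersome. The ear-decomposition argument sidesteps this by converting the minimality hypothesis into a bound on the number of ears in any ear decomposition, from which the theta-graph conclusion is immediate.
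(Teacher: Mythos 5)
Your proof is correct. Note, however, that the paper does not prove this statement at all: it is stated as a well-known fact (Observation~\ref{obs-mincyc}) and used without proof, so there is no argument of the authors' to compare against. Your argument is a valid self-contained justification. Two remarks on economy: the key step is your second paragraph, since the hypothesis applied to the subgraph $C_1\cup C_2$ immediately gives $H=C_1\cup C_2$, which already yields $\delta(H)\ge 2$, makes the bridge/degree-one reductions of your first paragraph redundant, and settles the disconnected case at once (disjoint cycles). For the case $|V(C_1)\cap V(C_2)|\ge 2$, your route via $2$-connectivity and an open ear decomposition is sound (each ear adds at least one edge, so for $k\ge 2$ the subgraph $C'\cup E_1$ is proper and contains two cycles, forcing $k=1$ and hence a theta graph); a marginally more elementary alternative is to look at the maximal subpaths of $C_2$ that are internally disjoint from $C_1$: if there were two such arcs, $C_1$ together with one of them would be a proper subgraph with two cycles, so there is exactly one arc and $H$ is a theta graph. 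Either way the conclusion follows, and the ear-decomposition formulation is a perfectly clean way to organize the overlap analysis.
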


Using this fact, we can now restrict intersections of cycles in a smallest counterexample.

\begin{lemma}\label{lemma-disj5fv}
If $H$ is a subgraph of a smallest counterexample containing at least two distinct cycles, then $n(H)\ge 9$.  In particular,
every two $5$-cycles in a smallest counterexample are edge-disjoint.
\end{lemma}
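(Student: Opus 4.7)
The plan is to pass to a minimal subgraph and invoke Observation~\ref{obs-mincyc}, then combine the potential machinery of Lemma~\ref{lemma-light} with the small-case census of Lemma~\ref{lemma-nosmall} to kill the remaining cases. Concretely, I would pick $H' \subseteq H$ minimal with the property of containing at least two distinct cycles, so that $n(H') \le n(H)$ and it suffices to prove $n(H') \ge 9$. By Observation~\ref{obs-mincyc}, $H'$ is either the union of two cycles meeting in at most one vertex, or a theta graph; by Lemma~\ref{lemma-girth}, each of its cycles has length at least $5$.

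The first subcase is immediate: $n(H') \ge 5+5-1=9$. The theta subcase is where the real work lies. I would parametrize $H'$ by the edge-lengths $a \le b \le c$ of its three internally disjoint paths between the branch vertices, giving $n(H')=a+b+c-1$, $p(H')=a+b+c-5$, and forcing each of $a+b$, $a+c$, $b+c$ to be at least $5$ by the girth bound. Assuming for contradiction $n(H') \le 8$, a short finite enumeration over such triples leaves only $(a,b,c) \in \{(2,3,3),(1,4,4),(2,3,4),(3,3,3)\}$, corresponding to $(n(H'),p(H')) \in \{(7,3),(8,4)\}$.

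Now I would invoke Lemma~\ref{lemma-light} on $H'$. Since $H'$ contains two cycles, it is not contained in $C_5$ and has more than one vertex, and since no theta graph is $5/2$-critical (as recorded at the end of the proof of Corollary~\ref{cor-precol}), $H' \ne G$. Matching the remaining clauses of the lemma against $p(H') \le 4$: the case $p(H')=3$ is killed immediately, since the only clause of the lemma compatible with $p(H') \le 3$ is $H'=G$; and the case $p(H')=4$ forces $G \in P_3(H')$, so that $n(G)=n(H')+2=10$ and $p(G)=p(H')-2=2$, contradicting Lemma~\ref{lemma-nosmall}. Thus $n(H') \ge 9$ in all cases.

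For the ``in particular'' clause, two $5$-cycles sharing an edge share at least two vertices, so their union is a subgraph on at most $8$ vertices containing two distinct cycles, contradicting what has just been proved. The main obstacle is closing the theta case with $p(H')=4$: the potential alone only guarantees $p(H') \ge 2$ once $G \in P_3(H')$ is allowed, and closure relies crucially on the explicit lower bound $n(G) \ge 22$ from Lemma~\ref{lemma-nosmall} to forbid $n(G)=10$.
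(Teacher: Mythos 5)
Your proposal is correct and follows essentially the same route as the paper: reduce to a minimal two-cycle subgraph, apply Observation~\ref{obs-mincyc} and the girth bound, use $p(H')=n(H')-4$ for the theta case together with Lemma~\ref{lemma-light} (forcing $G\in P_3(H')$ when $p(H')=4$), and finish with Lemma~\ref{lemma-nosmall} to exclude $n(G)=10$. The explicit enumeration of path-length triples is an unnecessary detour---the identity $p(H')=n(H')-4$ plus Lemma~\ref{lemma-light} already yields $n(H')\ge 8$ directly, leaving only the $n(H')=8$ case you handle---but the substance is identical to the paper's argument.
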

\begin{proof}
Let $G$ be a smallest counterexample, and suppose that $H\subseteq G$ contains at least two distinct cycles.
Choose $H$ so that $n(H)$ is minimum, and subject to that, $e(H)$ is minimum.
If $H$ contains two cycles intersecting in at most one vertex, then $n(H)\ge 9$ by Lemma~\ref{lemma-girth} as desired.
Hence, by Observation~\ref{obs-mincyc}, we can assume that $H$ is a theta graph.  No theta graph is $5/2$-critical, and thus $H\neq G$.
Note that $e(H)=n(H)+1$, and thus $p(H)=n(H)-4$.  By Lemma~\ref{lemma-light}, we have $n(H)\ge 8$.

Suppose for a contradiction that $n(H)=8$.
Then $p(H)=4$, and by Lemma~\ref{lemma-light}, we have $G\in P_3(H)$.  Since $n(G)=n(H)+2$ and $e(G)=e(H)+3$,
we have $p(G)=p(H)+2\cdot 5-3\cdot 4=2$.  Also, $n(G)=10$, and thus $G$ contradicts Lemma~\ref{lemma-nosmall}.
\end{proof}

A \emph{string} in a graph $G$ is a path with internal vertices of degree two and endvertices of degree at least $3$.  Note that an edge joining two
vertices of degree at least three is also a string.
For an integer $k\ge 0$, a \emph{$k$-string} is a string with $k+1$ edges.
If $u$ and $v$ are the endvertices of a string, then we say that $v$ is a \emph{friend} of $u$.

Let $P$ be a $k$-string with $k\ge 3$, let $u$ and $v$ be the endvertices of $P$ and let $C$ be a $5$-cycle.
Observe that every function $\psi:\{u,v\}\to V(C)$ can be extended to a homomorphism from $P$ to $C$.  Consequently,
we obtain the following claim.

\begin{observation}\label{obs-simp}
If $G$ is a $5/2$-critical graph, then $G$ has minimum degree at least two and contains no $k$-strings with $k\ge 3$.
\end{observation}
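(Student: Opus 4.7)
The plan is to handle the two assertions separately, each by the standard critical-graph trick: remove something, apply criticality to obtain a homomorphism of the smaller graph, then extend it back to $G$, contradicting that $G$ has no homomorphism to $C_5$.

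First I would deal with the minimum degree statement. Suppose for contradiction that $G$ has a vertex $v$ with $\deg_G(v)\le 1$. By $5/2$-criticality, $G-v$ admits a homomorphism $\psi\colon G-v\to C_5$. To extend $\psi$, map $v$ to an arbitrary vertex of $C_5$ if $v$ is isolated, and to any neighbor in $C_5$ of $\psi(u)$ if $u$ is the unique neighbor of $v$ (such a vertex exists because $C_5$ has minimum degree two). The resulting map is a homomorphism $G\to C_5$, contradicting criticality.

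For the second statement, suppose $P$ is a $k$-string with $k\ge 3$ and endvertices $u,v$. Pick any edge $e$ of $P$. Since $G-e$ is a proper subgraph of $G$, criticality yields a homomorphism $\psi\colon G-e\to C_5$. Apply the extension property stated immediately before the lemma to the partial assignment $\{u,v\}\to V(C_5)$ induced by $\psi$: this produces a homomorphism $\psi'\colon P\to C_5$ with $\psi'(u)=\psi(u)$ and $\psi'(v)=\psi(v)$. Now define $\psi^\ast\colon V(G)\to V(C_5)$ by setting $\psi^\ast(w)=\psi'(w)$ for $w$ an internal vertex of $P$, and $\psi^\ast(w)=\psi(w)$ otherwise.

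It remains to verify that $\psi^\ast$ is a homomorphism of $G$. Each edge of $G$ falls into one of two classes. Every edge of $P$ has at least one endpoint among the internal vertices of $P$ (here is where $k\ge 3$ ensures $P$ has $\ge 4$ edges and no edge of $P$ has both endpoints outside the interior), and on $V(P)$ we have $\psi^\ast=\psi'$, so such edges are preserved. Every other edge of $G$ avoids the internal vertices of $P$ entirely (those vertices have degree two in $G$ and both neighbors lie on $P$), so on such edges $\psi^\ast$ coincides with $\psi$; these edges also lie in $G-e$, and are preserved because $\psi$ is a homomorphism on $G-e$. Hence $\psi^\ast\colon G\to C_5$ is a homomorphism, contradicting $5/2$-criticality.

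There is no real obstacle here: the work has been concentrated in the walk-extension observation stated just before the lemma, which is what makes the ``re-color only the interior of $P$'' step go through. The only care needed is to note that internal vertices of a string have both neighbors on that string, so altering $\psi$ there cannot damage edges outside $P$.
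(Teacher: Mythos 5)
Your proof is correct and follows essentially the same route as the paper: the paper simply notes that for a path with at least four edges any assignment of its two endvertices to $V(C_5)$ extends to a homomorphism of the path, and your argument spells out the standard deletion-and-extension details (delete an edge or low-degree vertex, invoke criticality, recolor only the interior of the string) that the paper leaves implicit. The only cosmetic quibble is that $k\ge 3$ is needed for the endpoint-extension property itself, not for the observation that every edge of $P$ meets an internal vertex, but this does not affect correctness.
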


Cycles of length $5$ will play an important role in our proof.  Let a $5$-cycle in a smallest counterexample be called a \emph{cell}.
Observation~\ref{obs-simp} and Lemma~\ref{lemma-disj5fv} imply that in any smallest counterexample, if $K$ is a cell
and $P\not\subset K$ is a string, then $P$ has at most one end in $K$.
The \emph{degree} of $K$, denoted by $\deg(K)$, is the number of strings $P\not\subset K$ that intersect $K$.
Let us now give key lemmas establishing the importance of cells.

\begin{lemma}\label{lemma-opp5c}
Let $G$ be a smallest counterexample, let $v\in V(G)$ have degree $3$, and suppose that $v$ is an end of a $2$-string
$vv_1xy$.  Let $v_2$ and $v_3$ be the neighbors of $v$ distinct from $v_1$.  Then the path $v_2vv_3$ is contained
in a cell.
\end{lemma}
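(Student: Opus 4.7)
The plan is to argue by contradiction: assume the path $v_2vv_3$ lies on no cell of $G$, construct a modification $H$ of $G$ with fewer vertices that still has no $C_5$-coloring, and derive a contradiction via the minimality of $G$ together with Lemmas~\ref{lemma-light} and \ref{lemma-nosmall}. Since $G$ has girth at least $5$ (Lemma~\ref{lemma-girth}) and no cell contains $v_2vv_3$, the distance between $v_2$ and $v_3$ in $G-v$ must be at least $4$; in particular $v_2,v_3$ share no neighbor. Hence the graph $H$ obtained from $G - \{v, v_1, x\}$ by identifying $v_2$ and $v_3$ into a single vertex $z$ is simple, and a direct count yields $p(H) = p(G)$.

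The central colorability step is to show that any homomorphism $\psi : H \to C_5$ lifts to $G$: set $\psi(v_2) = \psi(v_3) = \psi(z)$, choose $\psi(v)$ to be whichever of the two distinct neighbors of $\psi(z)$ in $C_5$ differs from $\psi(y)$, and color the $2$-string $vv_1xy$ via any length-$3$ walk from $\psi(v)$ to $\psi(y)$ in $C_5$, which exists precisely because $\psi(v) \ne \psi(y)$. Thus $H$ has no homomorphism to $C_5$, so it contains a $5/2$-critical subgraph $H'$; minimality of $G$ gives $H' \in \{C_3, E_1, E_2\}$ or $p(H') \le 1$. If $H' = C_3$, the triangle must use $z$ (girth of $G$ rules out triangles in $G - \{v,v_1,x\}$), and the two $z$-incident edges must originate on different sides of the identification (otherwise a triangle at $v_2$ or $v_3$ in $G$ arises), so un-identifying yields a length-$3$ path from $v_2$ to $v_3$ in $G-v$ and hence a cell through $v_2vv_3$---contradicting the assumption. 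If $z \notin V(H')$, then $H' \subsetneq G$ with $p(H') \le 2$, directly contradicting Lemma~\ref{lemma-light}.

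The remaining case is $z \in V(H')$. Here I would split $z$ back into $v_2,v_3$ to obtain $H'' \subseteq G - \{v, v_1, x\}$, then form $H''+v \subsetneq G$ by reattaching $v$ with its two edges to $v_2$ and $v_3$; counting gives $p(H''+v) = p(H') + 2$. When $p(H') \le 1$ we have $p(H''+v) \le 3$, which is incompatible with every lower bound in Lemma~\ref{lemma-light}. The hard subcase is $H' \in \{E_1,E_2\}$, where $p(H''+v) = 4$ sits exactly at the threshold permitting $G \in P_3(H''+v)$: the two internal vertices of the required $3$-path are forced to be $v_1,x$, making the path $vv_1xy$, and one concludes $n(G) = n(H') + 4 = 14$ and $p(G) = 2$. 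Lemma~\ref{lemma-light} alone does not finish this subcase; the contradiction is sealed by invoking Lemma~\ref{lemma-nosmall}, which asserts that any smallest counterexample with $p(G) = 2$ has at least $22$ vertices.
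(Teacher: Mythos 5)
Your proof is correct and follows essentially the same route as the paper's: delete $v$, $v_1$, $x$, identify $v_2$ with $v_3$ into $z$, show that any homomorphism of the reduced graph to $C_5$ lifts to $G$, take a $5/2$-critical subgraph, split $z$ back and re-add the path $v_2vv_3$, and finish via Lemma~\ref{lemma-light} together with Lemma~\ref{lemma-nosmall} in the $E_1,E_2$ subcase. The only differences are cosmetic, e.g.\ you rule out the case $z\notin V(H')$ by a potential count rather than by noting that a $5/2$-critical graph has no proper $5/2$-critical subgraph, and your lifting argument uses a length-$3$ walk observation instead of the paper's explicit case analysis.
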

\begin{proof}
Suppose for a contradiction that the path $v_2vv_3$ is not contained in a cell.  Let $G_1$ be the graph obtained from $G-\{v,v_1,x\}$ by identifying $v_2$ with $v_3$
to a new vertex $z$.  Note that $G_1$ is triangle-free.

Suppose that there exists a homomorphism $\psi$ of $G_1$ to a $5$-cycle $C=c_1\ldots c_5$, where without loss of generality $\psi(z)=c_1$ and
$\psi(y)\in \{c_1,c_2,c_3\}$.  We can extend $\psi$ to a homomorphism from $G$ to $C$ as follows.  We set $\psi(v_2)=\psi(v_3)=c_1$.
If $\psi(y)\in \{c_1,c_3\}$, then set $\psi(v)=\psi(x)=c_2$ and $\psi(v_1)=c_1$.  If $\psi(y)=c_2$, then set
$\psi(v)=c_5$, $\psi(v_1)=c_4$ and $\psi(x)=c_3$.
Since $G$ is $5/2$-critical, this is a contradiction, and thus $G_1$ has no homomorphism to a $5$-cycle.

Consequently, $G_1$ contains a $5/2$-critical subgraph $G_2$.  Since $G$ is $5/2$-critical, note that $G_2\not\subset G$,
and thus $z\in V(G_2)$.  Let $G_3$ be the graph obtained from $G_2$ by splitting $z$ back to $v_2$ and $v_3$
and adding the path $v_2vv_3$.
Since $G$ is a smallest counterexample, we have $p(G_2)\le 2$, and $p(G_2)\le 1$ unless $G_2\in\{E_2,E_3\}$.  Furthermore, $p(G_3)=p(G_2)+5(n(G_3)-n(G_2))-4(e(G_3)-e(G_2))=p(G_2)+2\le 4$.
Since $v_1\not\in V(G_3)$, we have $G\neq G_3$, and thus Lemma~\ref{lemma-light} implies that $G\in P_3(G_3)$ and $p(G_2)=2$, i.e., $G_2\in \{E_1,E_2\}$.  Hence, $p(G_3)=4$ and $p(G)=p(G_3)+2\cdot 5-3\cdot 4=2$.
Since $n(G)=n(G_3)+2=n(G_2)+4=14$, this contradicts Lemma~\ref{lemma-nosmall}.
\end{proof}

Now we investigate coloring properties of cells of degree three.

\begin{lemma}\label{lemma-ext3cell}
Let $K=v_1\ldots v_5$ be a $5$-cycle in a triangle-free graph $G$ such that $v_1$, $v_2$, and $v_4$ have degree three
and $v_3$ and $v_5$ have degree two.  Let $u_1$, $u_2$, and $u_4$ be the neighbors of $v_1$, $v_2$, and $v_4$, respectively,
not contained in $K$.  Let $\psi$ be a homomorphism from $G-V(K)$ to a $5$-cycle $C$.
If $\psi(u_1)$ is adjacent to $\psi(u_2)$ in $C$, then $\psi$ extends to a homomorphism from $G$ to $C$.
\end{lemma}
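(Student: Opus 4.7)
The plan is a direct finite case analysis on the ``free'' color $\psi(u_4)$. Since $G$ is triangle-free, $u_1\neq u_2$ (else $u_1v_1v_2$ would be a triangle), so the hypothesis $\psi(u_1)\psi(u_2)\in E(C)$ is nonvacuous. After relabeling $C=c_1c_2c_3c_4c_5$ I may assume $\psi(u_1)=c_1$ and $\psi(u_2)=c_2$.

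Because $v_3$ and $v_5$ have degree two, the only edges joining $V(K)$ to the rest of $G$ are $v_1u_1$, $v_2u_2$, and $v_4u_4$. So extending $\psi$ reduces to finding a homomorphism $\varphi:K\to C$ with $\varphi(v_1)\in N_C(c_1)=\{c_2,c_5\}$, $\varphi(v_2)\in N_C(c_2)=\{c_1,c_3\}$, and $\varphi(v_4)\in N_C(\psi(u_4))$. Since $C_5$ is a core, every homomorphism $K\to C$ is an automorphism of $C_5$; enumerating the ten such maps and imposing the two constraints on $\varphi(v_1)$ and $\varphi(v_2)$ leaves exactly three candidates: the rotations $v_i\mapsto c_{i+1}$ and $v_i\mapsto c_{i-1}$, together with the reflection sending $(v_1,v_2)$ to $(c_2,c_1)$. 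These send $v_4$ to $c_5$, $c_3$, and $c_4$ respectively.

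To finish, I would observe that $N_C(c_3)\cup N_C(c_4)\cup N_C(c_5)=V(C)$, so whatever value $\psi(u_4)$ takes, at least one of the three candidates $\varphi$ is compatible with it and therefore extends $\psi$ to a homomorphism $G\to C$. The argument has no genuine obstacle; its only moving parts are the short enumeration of valid $\varphi$'s and the one-line use of triangle-freeness to guarantee $u_1\neq u_2$.
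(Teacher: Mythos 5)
Your proof is correct and takes essentially the same route as the paper: the three dihedral maps you enumerate are exactly the paper's three explicit extensions (one for each possible image $c_3$, $c_4$, $c_5$ of $v_4$), and picking the one whose value at $v_4$ is adjacent to $\psi(u_4)$ is precisely the paper's case analysis. Your framing via the core property of $C_5$ is just tidier bookkeeping for the same finite check.
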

\begin{proof}
Let $C=c_1\ldots c_5$.  Without loss of generality,
$\psi(u_1)=c_1$ and $\psi(u_2)=c_2$.  Choose $\psi(v_4)\in \{c_3,c_4,c_5\}$ adjacent to $\psi(u_4)$.
If $\psi(v_4)=c_3$, then let $\psi(v_1)=c_5$, $\psi(v_2)=c_1$, $\psi(v_3)=c_2$ and $\psi(v_5)=c_4$.
If $\psi(v_4)=c_4$, then let $\psi(v_1)=c_2$, $\psi(v_2)=c_1$, $\psi(v_3)=c_5$ and $\psi(v_5)=c_3$.
If $\psi(v_4)=c_5$, then let $\psi(v_1)=c_2$, $\psi(v_2)=c_3$, $\psi(v_3)=c_4$ and $\psi(v_5)=c_1$.
In all cases, we obtain a homomorphism from $G$ to $C$.
\end{proof}

\begin{lemma}\label{lemma-wtcell3}
A smallest counterexample does not contain a cell of degree at most three.
\end{lemma}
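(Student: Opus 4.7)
The plan is to argue by contradiction: assume the smallest counterexample $G$ contains a cell $K = v_1 v_2 v_3 v_4 v_5$ with $\deg(K) \le 3$, and derive a contradiction via case analysis on $\deg(K)$ and the distribution of the attachment points on $K$.

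First I would dispose of the trivial low-degree cases. The case $\deg(K) = 0$ forces $G = K = C_5$, which admits a homomorphism to itself and so is not $5/2$-critical. The case $\deg(K) = 1$ produces a cut vertex separating $K$ from the rest of $G$, incompatible with the $2$-connectivity of any $5/2$-critical graph distinct from $C_3$ (as used in Lemma~\ref{lemma-nosmall}). Configurations where all three strings (for $\deg(K)=3$) emanate from a single cell vertex force at least four consecutive degree-$2$ vertices in $K$, creating an internal path with three degree-$2$ internal vertices (a $3$-string) that violates Observation~\ref{obs-simp}.

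Next, for $\deg(K) = 3$ with three distinct attachment vertices, two arrangements arise up to symmetry of $K$: the alternating pattern $v_1, v_2, v_4$ directly matches the hypothesis of Lemma~\ref{lemma-ext3cell}, while the consecutive pattern $v_1, v_2, v_3$ is handled analogously. In the alternating case I would let $u_1, u_2, u_4$ denote the external neighbors of $v_1, v_2, v_4$ reached along the three strings, and form $G'$ from $G - V(K)$ by adding the edge $u_1 u_2$ (using Lemma~\ref{lemma-girth} and Lemma~\ref{lemma-disj5fv} to verify that this addition creates neither parallel edges nor a short cycle). Any homomorphism $\psi$ of $G'$ to a $5$-cycle $C$ would force $\psi(u_1)\psi(u_2)\in E(C)$, which by Lemma~\ref{lemma-ext3cell} extends to a $C_5$-homomorphism of $G$ — contradicting criticality of $G$. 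Hence $G'$ contains a $5/2$-critical subgraph $G_2$ that necessarily uses the new edge $u_1 u_2$ (otherwise $G_2$ would be a proper $5/2$-critical subgraph of $G$, impossible). Setting $H = G_2 - u_1 u_2$, we have $H \subsetneq G$ and $p(H) = p(G_2) + 4 \le 6$. Applying Lemma~\ref{lemma-light} to $H$, combined with the count $p(G) - p(H) = S - 10$ where $S \in \{3,\dots,9\}$ is the total number of edges in the three strings (each bounded by Observation~\ref{obs-simp}), would contradict either $p(G) \ge 2$ in generic cases or Lemma~\ref{lemma-nosmall} when $G_2 \in \{E_1,E_2\}$ forces $n(G) \le 21$ and $p(G) = 2$. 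The analogous treatment of the consecutive pattern uses Lemma~\ref{lemma-opp5c} to exploit the $2$-string $v_3 v_4 v_5 v_1$ inside $K$. For the subcase where exactly two of the three strings share an attachment vertex, the resulting $2$-string inside $K$ together with a degree-$3$ vertex again feeds directly into Lemma~\ref{lemma-opp5c}.

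Finally, for $\deg(K) = 2$, the two attachment vertices must be distinct (else a cut vertex appears). If they are adjacent on $K$, say $v_1, v_2$, I would delete the degree-$2$ vertices $v_3, v_4, v_5$: the resulting $G_1$ is a proper subgraph of $G$, and every homomorphism of $G_1$ to a $5$-cycle extends to $G$ because the residual path $v_2 v_3 v_4 v_5 v_1$ has length $4$, and in $C_5$ a walk of length $\ge 4$ realizes every pair of endpoint colors. Consequently $G_1$ has no $C_5$-homomorphism and contains a $5/2$-critical subgraph, which would be a proper $5/2$-critical subgraph of $G$, a contradiction. If the attachment vertices are non-adjacent on $K$, say $v_1, v_3$, the extension condition becomes ``$\psi(v_1)$ and $\psi(v_3)$ are at distance $2$ in $C_5$''; I would encode this constraint with a small auxiliary gadget and run the same minimality-plus-potential argument via Lemma~\ref{lemma-light}.

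The main obstacle will be the potential bookkeeping in the $\deg(K) = 3$ reduction. When the reduced critical subgraph $G_2$ is one of the exceptional graphs $E_1, E_2$, or when all three strings have their maximum allowed length so that $p(G) - p(H)$ is close to zero, the naive potential inequality does not immediately contradict $p(G) \ge 2$. In those borderline cases the argument must invoke Lemma~\ref{lemma-nosmall} to rule out small $G$ with $p(G) = 2$, and in some subcases apply Lemma~\ref{lemma-light} to carefully chosen further sub-structures of $G$, distinguishing whether the relevant enlargement lies in $P_2(H)$, $P_3(H)$, $Q(H)$, $E_1(H)$, or $E_2(H)$.
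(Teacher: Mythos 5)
Your reduction in the main case---delete $V(K)$, add the edge $u_1u_2$, invoke Lemma~\ref{lemma-ext3cell} to conclude the new graph has no homomorphism to $C_5$, and extract a $5/2$-critical subgraph $G_2$ that must contain $u_1u_2$---is exactly the paper's. But your closing potential count has a genuine gap. Setting $H=G_2-u_1u_2$ only gives $p(H)=p(G_2)+4\le 6$, and this does not contradict Lemma~\ref{lemma-light}: the generic conclusion of that lemma is precisely $p(H)\ge 6$, so equality (which occurs when $G_2\in\{E_1,E_2\}$) is perfectly consistent and nothing closes the argument. Worse, your follow-up identity $p(G)-p(H)=S-10$ tacitly assumes that $G$ is obtained from $H$ by adding back $K$ and its three strings; this is unjustified, since $G_2$ is merely some critical subgraph of $G_1$ and need not contain all of $G-V(K)$, so $G$ is not reconstructible from $H$ in this way and you cannot bound $n(G)$ or $p(G)$ in order to invoke Lemma~\ref{lemma-nosmall}. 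The paper closes the argument differently: it replaces the edge $u_1u_2$ of $G_2$ by the path $u_1v_1v_2u_2$, obtaining a subgraph $G_3\subseteq G$ with $p(G_3)=p(G_2)+2\le 4$; since $v_3,v_4,v_5\notin V(G_3)$, we have $G\neq G_3$ and $G\notin P_3(G_3)$, so Lemma~\ref{lemma-light} forces $p(G_3)\ge 5$, an immediate contradiction requiring no case analysis on string lengths or on whether $G_2\in\{E_1,E_2\}$. You need this (or an equivalent) strengthening of the bookkeeping; as written, the main case does not close.

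A secondary issue: in the $\deg(K)=2$ case with non-adjacent attachment vertices $v_1,v_3$, your plan to encode the ``distance exactly two'' constraint by a small auxiliary gadget is problematic, because any gadget whose homomorphisms to $C_5$ force two prescribed vertices onto images at distance exactly two is at least as large as a $5$-cycle through those two vertices---that is, essentially $K$ itself---so the reduction makes no progress. This case, and indeed every configuration in which a string of $G$ with at least two internal vertices lies inside $K$, is disposed of by the device you already use elsewhere: such a $2$-string inside $K$ has an end of degree three (as $\deg(K)\le 3$), so Lemma~\ref{lemma-opp5c} produces a cell sharing an edge with $K$, contradicting Lemma~\ref{lemma-disj5fv}. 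That is the paper's uniform treatment, which reduces the whole lemma to the single configuration where $v_1$, $v_2$, $v_4$ have degree three, followed by the $G_3$ computation described above.
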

\begin{proof}
For a contradiction, suppose that $K=v_1v_2v_3v_4v_5$ is a cell of degree at most three in a smallest counterexample $G$. 
By Observation~\ref{obs-simp}, $G$ does not contain any $k$-strings with $k\ge 3$.
If $G$ contained a $2$-string $P\subseteq K$, then at least one end of $P$ would have degree three,
and Lemma~\ref{lemma-opp5c} would imply that $G$ contains two $5$-cycles sharing an edge, contrary to Lemma~\ref{lemma-disj5fv}.

Therefore, we can assume that $K$ contains three vertices of degree three, and without loss of generality, these
vertices are $v_1$, $v_2$, and $v_4$.  For $i\in \{1,2,4\}$, let $u_i$ be the neighbor of $v_i$ not belonging to $K$.
Note that $u_1$ and $u_2$ are distinct and have no common neighbor by Lemma~\ref{lemma-disj5fv}.  Let $G_1$ be the graph obtained from $G-V(K)$ by adding the
edge $u_1u_2$.  Note that $G_1$ is triangle-free.

By Lemma~\ref{lemma-ext3cell}, any homomorphism from $G_1$ to a $5$-cycle would extend to a homomorphism from $G$ to a $5$-cycle.
Therefore, $G_1$ has no such homomorphism, and contains a $5/2$-critical subgraph $G_2$.
Since $G$ is a smallest counterexample, $p(G_2)\le 2$.

Since $G$ is $5/2$-critical, $G_2\not\subset G$, and it follows that $u_1u_2\in E(G_2)$.
Let $G_3$ be the graph obtained from $G_2-u_1u_2$ by adding the path $u_1v_1v_2u_2$.
Note that $p(G_3)=p(G_2)+2\le 4$. This contradicts Lemma~\ref{lemma-light}, since $v_3,v_4,v_5\not\in V(G_3)$, and thus $G\not\in \{G_3\}\cup P_3(G_3)$.
\end{proof}

For integers $k_1\ge \ldots\ge k_d\ge 0$,
a \emph{$(k_1,\ldots, k_d)$-vertex} of $G$ is a vertex of degree $d$ incident with a $k_1$-string, a $k_2$-string, \ldots, and a $k_d$-string.
A cell $K$ such that exactly a $k_1$-string, a $k_2$-string, \ldots, and a $k_d$-string have just one end in $K$ is called a \emph{$(k_1,\ldots, k_d)$-cell}.
If $x$ is either a $(k_1,\ldots, k_d)$-vertex or a $(k_1,\ldots, k_d)$-cell, we define the \emph{weight} $wt(x)=\sum_{i=1}^d k_i$.

\begin{corollary}\label{cor-no22x}
For any integer $k\ge 0$, no smallest counterexample contains a $(2,2,k)$-vertex.
\end{corollary}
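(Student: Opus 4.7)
The plan is to apply Lemma~\ref{lemma-opp5c} twice. Let $v$ be a hypothetical $(2,2,k)$-vertex in a smallest counterexample $G$, with neighbors $v_1,v_2,v_3$, where $vv_1$ initiates one of the $2$-strings, $vv_2$ initiates the other $2$-string, and $vv_3$ initiates the $k$-string. Since $v$ has degree $3$ and $vv_1xy$ is a $2$-string ending at $v$, Lemma~\ref{lemma-opp5c} applies and yields a cell $K$ containing the path $v_2vv_3$. Similarly, since $vv_2x'y'$ is also a $2$-string ending at $v$, a second application of Lemma~\ref{lemma-opp5c} yields a cell $K'$ containing the path $v_1vv_3$.

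Next I would extract the contradiction from Lemma~\ref{lemma-disj5fv}, which asserts that any two $5$-cycles in a smallest counterexample are edge-disjoint. Both $K$ and $K'$ contain the edge $vv_3$, so it suffices to show $K\neq K'$. If $K=K'$, then $K$ would contain all three neighbors $v_1,v_2,v_3$ of $v$; but $v\in V(K)$ has exactly two neighbors in the $5$-cycle $K$, a contradiction. Hence $K\neq K'$, and they share the edge $vv_3$, contradicting Lemma~\ref{lemma-disj5fv}.

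I don't expect any substantive obstacle here: the entire argument is a two-line deduction from Lemma~\ref{lemma-opp5c} and Lemma~\ref{lemma-disj5fv}, using only the fact that a degree-$3$ vertex has just three neighbors and a $5$-cycle through it uses exactly two of them. The only thing one needs to be a bit careful about is that the hypotheses of Lemma~\ref{lemma-opp5c} really do apply to each of the two $2$-strings incident to $v$, which is immediate because $v$ has degree exactly $3$ in both cases.
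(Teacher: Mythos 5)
Your proof is correct and is essentially the paper's own argument: apply Lemma~\ref{lemma-opp5c} to each of the two $2$-strings incident with the $(2,2,k)$-vertex to obtain two cells through $v$ sharing the edge $vv_3$, contradicting the edge-disjointness of $5$-cycles given by Lemma~\ref{lemma-disj5fv}. Your explicit check that the two cells are distinct (a $5$-cycle uses only two neighbors of $v$) is the one detail the paper leaves implicit, and it is argued correctly.
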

\begin{proof}
Suppose that a smallest counterexample $G$ contains a $(2,2,k)$-vertex.
Lemma~\ref{lemma-opp5c} applied to the two strings incident with such a vertex would imply that $G$ contains
two distinct cells sharing an edge, contrary to Lemma~\ref{lemma-disj5fv}.
\end{proof}

For a graph $H$, let $Q'(H)\subseteq Q(H)$ denote the set of graphs obtained from $H$ by adding a vertex $z$, two $1$-strings joining $z$ to two adjacent vertices $x_1$ and $x_2$ in $H$
such that $x_1$ has degree at least three in $H$, and adding a $2$-string joining $z$ to another vertex $x_3\in V(H)$.

\begin{corollary}\label{cor-simplight}
If $G$ is a smallest counterexample, then $G$ does not contain a subgraph $H$ such that $G\in E_1(H)\cup E_2(H)$.
Furthermore, if $G\in Q(H)$, then $G\in Q'(H)$.
\end{corollary}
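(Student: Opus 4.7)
The plan is to handle the two claims separately, using Lemma~\ref{lemma-disj5fv} (edge-disjointness of cells), Lemma~\ref{lemma-opp5c}, Lemma~\ref{lemma-wtcell3}, Corollary~\ref{cor-no22x}, and Observation~\ref{obs-simp} (bounding string lengths to $3$ edges). For the first claim, I would case-split on the vertex $v^\ast$ of $E_i$ that is split when forming $G$. The structural fact I exploit is that $E_1$ has three $5$-cycles pairwise sharing an edge through $u$, and $E_2$ has two $5$-cycles sharing $uv$. If the split leaves two $5$-cycles intact (every degree-$2$ vertex of $E_1$ lying on a single cycle, and $z_1, z_2$ in $E_2$), the surviving pair still shares an edge, contradicting Lemma~\ref{lemma-disj5fv}. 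If exactly one cycle $K$ survives (the degree-$3$ vertices $v_1, v_4, v_7$ of $E_1$, or $x_1, x_2, x_3, y_1, y_2, y_3$ of $E_2$), a direct enumeration gives exactly one outgoing string per degree-$\ge 3$ vertex of $K$, so $\deg(K) = 3$, contradicting Lemma~\ref{lemma-wtcell3}. The remaining cases destroy all original $5$-cycles. For $E_1$ (split $u$), the vertex $v_1$ becomes a $(2,2,k)$-vertex via two $2$-strings around the $9$-cycle plus the split-image edge, contradicting Corollary~\ref{cor-no22x}. For $E_2$ (split $u$, and symmetrically $v$), each of $x_2$ and $y_2$ is a $(1,1,2)$-vertex, unless some split-image $u_j$ has $H$-degree $1$, in which case the corresponding string extends through $u_j$ to make them $(2,2,k)$-vertices (directly contradicting Corollary~\ref{cor-no22x}). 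Applying Lemma~\ref{lemma-opp5c} at $x_2$ forces a cell through $x_1 x_2 x_3$; since $x_3$'s only non-$x_2$ neighbor in $G$ is $u_2 \in V(H)$ and $x_1$'s is $v$, this cell must be $x_1 x_2 x_3 u_2 v$, closing only if $u_2 v \in E(G)$. That edge can only arise from the split of $u$, forcing $u_1 = u_2$; Lemma~\ref{lemma-opp5c} applied symmetrically at $y_2$ forces $u_1 = u_3$, so $u_1 = u_2 = u_3$, contradicting the ``not all equal'' requirement of $E_2(H)$.

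For the second claim, the $(1,3,3)$-subcase is immediate: Observation~\ref{obs-simp} makes the length-$3$ paths into $2$-strings, so the added vertex is a $(2,2,k)$-vertex, contradicting Corollary~\ref{cor-no22x}. In the $(2,2,3)$-subcase, name the attached structure as $z a_1 x_1$, $z a_2 x_2$, $z b_1 b_2 x_3$ with $x_1, x_2, x_3 \in V(H)$. If $d_H(x_1) \le 1$ or $d_H(x_2) \le 1$, the corresponding short path extends through $x_i$ into a $2$-string and $z$ becomes a $(2,2,k)$-vertex, again contradicting Corollary~\ref{cor-no22x}. Thus $d_H(x_1), d_H(x_2) \ge 2$, and Lemma~\ref{lemma-opp5c} applied at $z$ forces $a_1 z a_2$ into a cell $K$. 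Since $a_i$'s only non-$z$ neighbor is $x_i$, $K$ must be $a_1 z a_2 x_2 x_1$, which requires $x_1 x_2 \in E(G)$, and hence $x_1 x_2 \in E(H)$ since $V(H)$-vertices are joined in $G$ only by edges of $H$. Counting outgoing strings of $K$ gives $\deg(K) = 1 + (d_H(x_1) - 1) + (d_H(x_2) - 1)$, so Lemma~\ref{lemma-wtcell3} forces $d_H(x_1) + d_H(x_2) \ge 5$; hence after possibly swapping $x_1 \leftrightarrow x_2$, $d_H(x_1) \ge 3$, placing $G$ in $Q'(H)$.

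I expect the main obstacle to be the $u$-split (and symmetric $v$-split) subcase of $E_2(H)$, where the double application of Lemma~\ref{lemma-opp5c} together with the tracing argument identifying the unique closing vertex of each forced cell inside $V(H)$ must be written out carefully; all other cases reduce to routine counting of outgoing strings inside a fixed cell.
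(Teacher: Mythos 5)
Your proposal is correct and follows essentially the paper's route: it relies on the same tools (Lemma~\ref{lemma-disj5fv}, Lemma~\ref{lemma-opp5c}, Lemma~\ref{lemma-wtcell3}, Corollary~\ref{cor-no22x}, Observation~\ref{obs-simp}), your explicit case analysis over the split vertex merely spells out what the paper compresses into ``inspection of the graphs depicted in Figure~\ref{fig-excext}'', and you arrive at the same hard case (splitting $u$ or $v$ in $E_2$), resolved exactly as in the paper by applying Lemma~\ref{lemma-opp5c} at the two degree-three ends of the $2$-string $x_2z_1z_2y_2$; likewise your argument for $Q(H)\Rightarrow Q'(H)$ coincides with the paper's. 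The only (harmless) difference is the final contradiction in that hard case: you pin down the two forced cells and conclude $u_1=u_2=u_3$, violating the ``not all equal'' clause of the construction, whereas the paper notes that both forced cells must use the third edge at $v$ and contradicts Lemma~\ref{lemma-disj5fv}.
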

\begin{proof}
Suppose for a contradiction that $G\in E_1(H)\cup E_2(H)$.  Since $G$ contains neither a cell of degree three by Lemma~\ref{lemma-wtcell3}, nor a $(2,2,k)$-vertex by Corollary~\ref{cor-no22x},
the inspection of the graphs depicted in Figure~\ref{fig-excext} shows that $G$ contains a $7$-cycle $K=v_1\ldots v_7$ such that $v_2$, $v_5$, and $v_7$ have degree three and all other vertices of $K$ have degree two.
Let $e$ be the edge incident with $v_7$ that does not belong to $K$.
By Lemma~\ref{lemma-opp5c} applied to the $3$-string $v_2v_3v_4v_5$, both edges $v_1v_2$ and $v_5v_6$ are contained in cells.
By Lemma~\ref{lemma-disj5fv}, $K$ is an induced cycle, and it follows that $e$ is contained in two distinct cells.
This contradicts Lemma~\ref{lemma-disj5fv}.

If $G\in Q(H)$, then by Corollary~\ref{cor-no22x}, $G$ is obtained from $H$ by adding a vertex $z$, two $1$-strings joining $z$ to two distinct vertices $x_1$ and $x_2$ in $H$,
and a $2$-string joining $z$ to another vertex $x_3\in V(H)$.  Lemma~\ref{lemma-opp5c} implies
that $x_1$ and $x_2$ are adjacent, and by Lemma~\ref{lemma-wtcell3}, at least one of $x_1$ and $x_2$ has degree at least four in $G$, and thus degree at least $3$ in $H$.
Therefore, $G\in Q'(H)$.
\end{proof}

Also, Lemma~\ref{lemma-opp5c} trivially implies the following.

\begin{corollary}\label{obs-v3}
Let $v$ be a vertex of degree $3$ in a smallest counterexample.  Suppose that $v$ is not contained in a cell. Then $v$ has weight at most three,
and if $wt(v)=3$, then $v$ is a $(1,1,1)$-vertex.
\end{corollary}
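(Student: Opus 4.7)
The proof is essentially a direct consequence of Lemma~\ref{lemma-opp5c} together with Observation~\ref{obs-simp}, so my plan is very short.

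First, I would note that by Observation~\ref{obs-simp}, every string incident with $v$ is a $k$-string with $k\in\{0,1,2\}$. Hence the three strings at $v$ have lengths in $\{0,1,2\}$, and the weight $wt(v)$ is at most $6$.

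Next, I would argue that no string incident with $v$ can be a $2$-string. Suppose for a contradiction that $v$ is an endpoint of a $2$-string, say $vv_1xy$. Since $v$ has degree $3$, Lemma~\ref{lemma-opp5c} applies and tells us that the path consisting of $v$ together with its two other neighbors is contained in a cell. But this means $v$ itself lies on that cell, contradicting the hypothesis that $v$ is not contained in a cell.

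Therefore each of the three strings at $v$ is a $k$-string with $k\in\{0,1\}$. Summing over the three strings gives $wt(v)\le 3$. Moreover, equality forces every string at $v$ to be a $1$-string, so $v$ is a $(1,1,1)$-vertex, as claimed. The only potential obstacle—ruling out a $(2,1,0)$- or $(2,1,1)$-configuration without going through a $(2,2,k)$-vertex—is handled entirely by Lemma~\ref{lemma-opp5c}, so no additional case analysis is needed.
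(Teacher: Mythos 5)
Your proof is correct and follows exactly the route the paper intends: the paper gives no written argument beyond stating that Lemma~\ref{lemma-opp5c} ``trivially implies'' the corollary, and your write-up simply spells out that trivial implication (a $2$-string at a degree-$3$ vertex $v$ would force, via Lemma~\ref{lemma-opp5c}, the path through the other two neighbors of $v$ to lie in a cell containing $v$), combined with Observation~\ref{obs-simp} to cap string lengths. Nothing further is needed.
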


Given a cycle $K$, a \emph{$K$-bad path} is a path of length three intersecting $K$ exactly in its ends.

\begin{lemma}\label{lemma-no6}
A smallest counterexample contains no $6$-cycles.
\end{lemma}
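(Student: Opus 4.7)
The plan is as follows.  Suppose for contradiction that $K=v_1v_2v_3v_4v_5v_6$ is a $6$-cycle in a smallest counterexample $G$.  By Lemma~\ref{lemma-girth} the girth of $G$ is at least $5$, so $K$ is an induced $6$-cycle.

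\textbf{Step 1 (auxiliary structure).}  I would first establish that $G$ contains no $K$-bad path.  Since the endpoints of a $K$-bad path $P$ lie at distance $1$, $2$, or $3$ on $K$, one checks each case: distance $1$ creates a $4$-cycle with the $K$-edge (girth violation); distance $2$ creates a $5$-cycle sharing three vertices (and two edges) with $K$, so $|V(K\cup P\cup\text{closing path})|=8$, violating Lemma~\ref{lemma-disj5fv}; and for distance $3$ one sets $H=K\cup P$, obtains $n(H)=8$, $p(H)=4$, and applies Lemma~\ref{lemma-light}, so either $H=G$ or $G\in P_3(H)$.  The first forces $G$ to be three internally disjoint length-$3$ paths between two vertices, which is seen to admit a $C_5$-homomorphism (three independent walks of length three in $C_5$ between two colors), contradicting $5/2$-criticality; the second gives $n(G)=10$, $p(G)=2$, contradicting Lemma~\ref{lemma-nosmall}.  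From this I would derive that no two external neighbors of distinct $K$-vertices are adjacent in $G$, and that no two opposite $K$-vertices share an external neighbor (the latter because such a shared neighbor $w$ gives two $5$-cells sharing the edge to $w$, violating Lemma~\ref{lemma-disj5fv}); non-opposite shared neighbors are already forbidden by the girth.

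\textbf{Step 2 (identification reduction).}  Let $G_1$ be obtained from $G$ by identifying $\{v_1,v_3,v_5\}$ to a single vertex $a$ and $\{v_2,v_4,v_6\}$ to a single vertex $b$, keeping only one copy of any resulting multi-edge.  The facts above imply that $G_1$ is triangle-free.  Any homomorphism $\psi\colon G_1\to C_5$ extends to $G$ by setting $\psi(v_i)=\psi(a)$ for odd $i$ and $\psi(v_i)=\psi(b)$ for even $i$, since every $K$-edge then carries the color pair $\psi(a)\psi(b)$, which is an edge of $C_5$.  Hence $G_1$ has no $C_5$-homomorphism and contains a $5/2$-critical subgraph $G_2$.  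A direct count gives $n(G_1)=n(G)-4$ and $e(G_1)=e(G)-5$, so $p(G_1)=p(G)$.  Since $n(G_2)\le n(G_1)<n(G)$ and $G_1$ is triangle-free, the minimality of $G$ yields $p(G_2)\le 2$, with equality only if $G_2\in\{E_1,E_2\}$.

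\textbf{Step 3 (lift to $G_3\subseteq G$ and close).}  I would reconstruct $G_3\subseteq G$ by splitting $a$ back to $v_1,v_3,v_5$ (redirecting each edge $au\in E(G_2)$ to the unique $v_iu\in E(G)$), similarly for $b$, and adjoining all six $K$-edges.  A short computation gives
\[
p(G_3)-p(G_2)\;=\;\begin{cases}0, & a,b\in V(G_2),\ ab\in E(G_2),\\ -4, & a,b\in V(G_2),\ ab\notin E(G_2),\\ +1, & \text{exactly one of }a,b\text{ lies in }V(G_2).\end{cases}
\]
Combining each line with the bound from Lemma~\ref{lemma-light} (namely $p(G_3)\ge 2$ if $G_3=G$ and $p(G_3)\ge 4$ otherwise, since $G\notin E_1(G_3)\cup E_2(G_3)$ by Corollary~\ref{cor-simplight}) produces the following contradictions.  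In the first line, $G_3=G$ is forced, so $G_1=G_2\in\{E_1,E_2\}$, $n(G)=14$ and $p(G)=2$, contradicting Lemma~\ref{lemma-nosmall}.  In the second, $p(G_3)\le -2$ is already impossible.  In the third, $p(G_3)\le 3<4$ forces $G_3=G$, which in turn forces $v_2,v_4,v_6$ all to have degree $2$ in $G$ (otherwise their missing external edges prevent $G_3=G$); then $p(G)\in\{2,3\}$.  For $p(G)=3$, $G_2\in\{E_1,E_2\}$ and $n(G)=15$, and I would rule out each choice of the vertex $a\in V(G_2)$ by exhibiting either a $(2,2,0)$-vertex among the neighbors of $a$ (contradicting Corollary~\ref{cor-no22x}) or a $(1,1,2)$-vertex among $v_1,v_3,v_5$ that is not contained in any cell (contradicting Corollary~\ref{obs-v3}).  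For $p(G)=2$, Lemma~\ref{lemma-nosmall} gives $n(G)\ge 22$, and one uses Corollary~\ref{obs-v3} applied to $v_1,v_3,v_5$ together with Lemma~\ref{lemma-opp5c} to force a cell through some $v_i$ that shares three vertices with $K$, again violating Lemma~\ref{lemma-disj5fv}.

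\textbf{Main obstacle.}  The symmetric cases (both or neither of $a,b$ in $V(G_2)$) close by clean potential arithmetic.  The hard part is the asymmetric case where exactly one of $a,b$ lies in $V(G_2)$: the potential argument alone leaves $p(G)\in\{2,3\}$ open, and closing it requires a case-by-case inspection that matches each degree-$3$ vertex $a$ of $E_1$ or $E_2$ to the triple of external attachments $u_1,u_3,u_5$ and uses Corollaries~\ref{cor-no22x} and \ref{obs-v3} to identify a concrete forbidden local configuration.
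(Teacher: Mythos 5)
Your overall strategy differs from the paper's: the paper folds the hexagon onto a path by identifying $v_2$ with $v_6$ and $v_3$ with $v_5$, after choosing the fold axis so that the retained vertex $v_1$ has degree at least three, whereas you collapse $K$ to a single edge $ab$. Your Step 1 and Step 2 are fine (in fact the distance-$3$ bad path is excluded immediately by Lemma~\ref{lemma-disj5fv}, since $K\cup P$ has $8<9$ vertices and two distinct cycles, so the detour through Lemma~\ref{lemma-light} and Lemma~\ref{lemma-nosmall} is unnecessary), and your potential arithmetic in the symmetric cases of Step 3 is correct. The problem is the asymmetric case, which you yourself flag as the main obstacle: it is not closed by the tools you cite. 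When only $a\in V(G_2)$ you correctly force $G=G_3$, so $G$ is obtained from a $5/2$-critical graph $G_2$ with $p(G_2)\in\{1,2\}$ by exploding a vertex $a$ of degree at most $3$ into the $6$-cycle, with $v_2,v_4,v_6$ of degree two. Your proposed finish via Corollary~\ref{obs-v3} and Lemma~\ref{lemma-opp5c} only produces a cell through $v_{i-1}v_iv_{i+1}$ (and hence a contradiction with Lemma~\ref{lemma-disj5fv}) when some attachment string at $v_1,v_3,v_5$ has length at least two. Nothing prevents all three attachments from being $0$- or $1$-strings --- for instance whenever all neighbors of $a$ in $G_2$ have degree three --- and then each $v_i$ is a legitimate $(1,1,0)$- or $(1,1,1)$-vertex and no contradiction arises. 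In the sub-case $p(G_2)=1$ the graph $G_2$ is an arbitrary potential-$1$ critical graph, so no finite inspection is available, and ruling out that such an ``exploded'' graph is critical is a genuinely nontrivial claim that your argument does not address. Even in the sub-case $G_2\in\{E_1,E_2\}$ your claimed dichotomy is inaccurate: for $G_2=E_2$ with $a=u$ (or $a=v$) the neighbors of $a$ yield no $(2,2,0)$-vertex and none of $v_1,v_3,v_5$ is a $(2,1,1)$-vertex; the forbidden configuration sits instead at $x_2$ or $y_2$, two steps away from $a$, so the inspection you outline would have to be redone and, in any case, has not been carried out.

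This is precisely the difficulty the paper's choice of identification avoids: by keeping the antipodal pair $v_1,v_4$ and labelling so that $v_1$ has degree at least three, the count $p(G_3)\le p(G_2)-|\{v_1,v_4\}\cap V(G_2)|$ in the main case forces $G=G_3$ with $v_1,v_4\notin V(G_2)$, hence $\deg_G(v_1)=2$, an immediate contradiction with the labelling choice; the cases where only one of $w_1,w_2$ survives die because $G_3$ misses two vertices of $K$, so $G\notin\{G_3\}\cup P_3(G_3)$. If you want to salvage your collapse-to-an-edge reduction, you would need a separate argument eliminating the residual ``vertex exploded into a hexagon'' structure, and none is supplied.
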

\begin{proof}
Suppose for a contradiction that $K=v_1v_2\ldots v_6$ is a $6$-cycle in a smallest counterexample $G$.
By Lemma~\ref{lemma-disj5fv}, $K$ is an induced cycle, no two vertices of $K$ have a common neighbor outside of $K$, and $G$ contains
no $K$-bad path.  Choose the labels of the vertices of $K$ so that $v_1$ has degree at least three.

Let $G_1$ be the graph obtained from $G$ by identifying the vertices $v_2$ and $v_6$ to a new vertex $w_1$,
and $v_3$ and $v_5$ to a new vertex $w_2$.  Observe that $G_1$ is triangle-free, and that it has no homomorphism to $C_5$.
Let $G_2$ be a $5/2$-critical subgraph of $G_1$.  Since $G_2\not\subseteq G$, at least one of $w_1$ or $w_2$ is a vertex of $G_2$.
By the minimality of $G$, we have $p(G_2)\le 2$.

If $w_2\not\in V(G_2)$, then let $G_3$ be the subgraph of $G$ obtained from $G_2$ by splitting $w_1$ back to $v_2$ and $v_6$
and adding the path $v_2v_1v_6$.  We have $p(G_3)\le p(G_2)+2\le 4$.  Since $G_3$ does not contain the vertices $v_3$ and $v_5$,
it follows that $G\not\in \{G_3\}\cup P_3(G_3)$, which contradicts Lemma~\ref{lemma-light}.  A symmetrical argument excludes the case that $w_1\not\in V(G_2)$.

Therefore, both $w_1$ and $w_2$ are vertices of $G_2$.  Let $G_3$ be the subgraph of $G$ obtained from $G_2$ by splitting $w_1$ back to $v_2$ and $v_6$,
splitting $w_2$ back to $v_3$ and $v_5$, and by adding $K$.
Let $a=|\{v_1,v_4\}\cap V(G_2)|$.  We have $n(G_3)=n(G_2)+4-a$ and $e(G_3)\ge e(G_2)+5-a$, hence
$p(G_3)\le p(G_2)+5(4-a)-4(5-a)=p(G_2)-a\le 2-a$.
By Lemma~\ref{lemma-light}, we have $G=G_3$ and $a=0$, and thus $v_1$ and $v_4$ have degree two in $G$.  This contradicts
the choice of the labels of $K$.
\end{proof}

We can now strengthen Lemma~\ref{lemma-disj5fv}.

\begin{lemma}\label{lemma-disj5}
If $H$ is a subgraph of a smallest counterexample containing at least two distinct cycles, then $n(H)\ge 10$.  In particular,
every two $5$-cycles in a smallest counterexample are vertex-disjoint.
\end{lemma}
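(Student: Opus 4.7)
The plan is to mirror the proof of Lemma~\ref{lemma-disj5fv}, pushing one step further to exclude $n(H)=9$. Suppose a smallest counterexample $G$ contains $H$ with at least two distinct cycles and $n(H)\le 9$, chosen with $n(H)$ minimum and then $e(H)$ minimum. By Lemma~\ref{lemma-disj5fv} we have $n(H)=9$; by Observation~\ref{obs-mincyc}, $H$ is either two cycles meeting in at most one vertex, or a theta graph. Using Lemma~\ref{lemma-girth} and Lemma~\ref{lemma-no6}, the admissible cycle lengths force $H$ to be either (a) two $5$-cycles $K_1,K_2$ sharing a single vertex $v$, or (b) a theta graph with three internally disjoint paths of lengths $2,3,5$ joining branch vertices $u,u'$. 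In both cases $e(H)=10$ and $p(H)=5$.

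By Lemma~\ref{lemma-light}, $G\in\{H\}\cup P_2(H)\cup P_3(H)\cup Q(H)\cup E_1(H)\cup E_2(H)$. Since $H$ is not $5/2$-critical, $G\ne H$; Corollary~\ref{cor-simplight} eliminates $E_1(H)\cup E_2(H)$ and reduces $Q(H)$ to $Q'(H)$. A direct computation gives $(n(G),p(G))=(10,2)$ for $G\in P_2(H)$ and $(14,2)$ for $G\in Q'(H)$, both contradicting Lemma~\ref{lemma-nosmall}. This leaves $G\in P_3(H)$, where $(n(G),p(G))=(11,3)$ is not immediately ruled out by Lemma~\ref{lemma-nosmall}.

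To exclude $G\in P_3(H)$, I would apply Lemma~\ref{lemma-wtcell3} to a $5$-cycle of $H$ (which remains a cell in $G$): take $K=K_1$ in case (a), and let $K$ be the unique $5$-cycle of $H$ (formed by the paths of lengths $2$ and $3$) in case (b). Any string of $G$ that intersects $K$ but is not contained in $K$ must use an edge outside $K$; such strings come from the ``other cycle'' $K_2$ (case a), the $5$-path on $z_1,\ldots,z_4$ (case b), or the added $3$-path. The main obstacle is the bookkeeping: one enumerates the positions of the $3$-path's endpoints (both in $K$, one in $K$ and one outside, both outside, or one at $v$ in case (a)) and checks that the ``other cycle'' or $5$-path contributes at most two strings intersecting $K$, while the added $3$-path contributes at most one, giving $\deg(K)\le 3$ and contradicting Lemma~\ref{lemma-wtcell3}. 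One subtlety to watch for is when a degree-$2$ path fails to form a string at all---for example, in case (a), if no endpoint of the $3$-path lies in $K_2\setminus\{v\}$, then $K_2$ contains no strings because $v$ is its only high-degree vertex; the count is then carried out on $\deg(K_2)$ instead. The ``in particular'' clause follows since two $5$-cycles sharing a vertex would form a subgraph $H$ with $n(H)=9<10$.
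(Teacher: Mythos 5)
Your reduction is correct and, up to the last case, identical to the paper's: choose $H$ minimal, use Observation~\ref{obs-mincyc} with Lemmas~\ref{lemma-girth} and~\ref{lemma-no6} to pin $H$ down to two pentagons sharing a vertex or the theta graph with paths of lengths $2,3,5$, note $p(H)=5$ and $G\neq H$, and eliminate $P_2(H)$, $Q'(H)$ and $E_1(H)\cup E_2(H)$ via Lemma~\ref{lemma-light}, Corollary~\ref{cor-simplight} and Lemma~\ref{lemma-nosmall}. Where you genuinely diverge is the residual case $G\in P_3(H)$: the paper kills it by hand, exhibiting in the two-pentagon case either a $k$-string with $k\ge 3$ or a $(2,2,1)$-vertex (Observation~\ref{obs-simp}, Corollary~\ref{cor-no22x}), and in the theta case combining Lemma~\ref{lemma-no6}, Lemma~\ref{lemma-opp5c} (which forces a cell sharing an edge with the pentagon $v_1v_2v_3v_5v_4$, against Lemma~\ref{lemma-disj5fv}) and Corollary~\ref{cor-no22x}. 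You instead fix a $5$-cycle $K$ of $H$ and count strings meeting it: every string not contained in $K$ uses only edges of the other cycle/the $5$-path or of the added $3$-path, the former part contributes at most two such strings (each must use one of the at most two edges leaving $K$ into it) and the latter exactly one string, touching $K$ at most once as a string, so $\deg(K)\le 3$, contradicting Lemma~\ref{lemma-wtcell3}. This is sound: Lemma~\ref{lemma-wtcell3} precedes Lemma~\ref{lemma-disj5} and depends only on Lemma~\ref{lemma-disj5fv}, so there is no circularity, and the degenerate placements you flag (the added path with both ends on $K$, or the second pentagon hanging on $K$ at the single vertex $v$ and hence carrying no string) still satisfy $\deg(K)\le 3$ under the paper's definition of cell degree, so the contradiction stands there too (those placements can alternatively be excluded outright by Lemma~\ref{lemma-disj5fv} or by criticality). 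The trade-off: the paper's treatment uses only the lighter structural facts but needs a placement-by-placement analysis, while your count is uniform over all placements at the price of invoking the heavier Lemma~\ref{lemma-wtcell3}.
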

\begin{proof}
Let $G$ be a smallest counterexample, and suppose that $H\subseteq G$ contains at least two distinct cycles.
Choose $H$ so that $n(H)$ is minimum, and subject to that, $e(H)$ is minimum.
If $H$ contains two vertex-disjoint cycles, then $n(H)\ge 10$ by Lemma~\ref{lemma-girth}.
By Observation~\ref{obs-mincyc}, $H$ either is a theta graph or consists of two cycles intersecting in exactly one vertex.
No such graph is $5/2$-critical, and thus $H\neq G$.  By Lemma~\ref{lemma-disj5fv}, we have $n(H)\ge 9$.
Note that $e(H)=n(H)+1$, and thus $p(H)=n(H)-4$.

Suppose for a contradiction that $n(H)=9$.  By Lemma~\ref{lemma-light} and Corollary~\ref{cor-simplight}, we have
$G\in P_2(H)\cup P_3(H)\cup Q'(H)$.  If $G\in P_2(H)$, then $p(G)=2$ and $n(G)=10$.  If $G\in Q'(H)$, then $p(G)=2$ and $n(G)=14$.
In both cases, this contradicts Lemma~\ref{lemma-nosmall}.

Hence, suppose that $G\in P_3(H)$.  If $H$ is the union of two $5$-cycles $v_1v_2v_3v_4v_5$ and $v_1v_6v_7v_8v_9$,
then observe that $G$ contains either a $k$-string with $k\ge 3$, or a $(2,2,1)$-vertex, contrary to Observation~\ref{obs-simp}
and Corollary~\ref{cor-no22x}.

If $H$ is a theta graph, observe that Lemma~\ref{lemma-no6} implies that $H$ is the union of paths of length $2$, $3$, and $5$ with common endvertices.
Let the paths be $v_1v_2v_3$, $v_1v_4v_5v_3$, and $v_1v_6v_7v_8v_9v_3$.  Let $G$ be obtained from $H$ by adding the path $v_ixyv_j$
for some $i,j\in \{1,\ldots, 9\}$.  By Lemma~\ref{lemma-no6}, we have $\{i,j\}\neq \{1,3\}$, and thus say $v_3$ has degree three in $G$.
If $\{i,j\}\cap \{4,5\}=\emptyset$, then Lemma~\ref{lemma-opp5c} implies that the path $v_2v_3v_9$ is contained in a cell, intersecting the
cell $v_1v_2v_3v_5v_4$.  This contradicts Lemma~\ref{lemma-disj5fv}, and thus we can assume that $i\in\{4,5\}$.  Since $G$ contains no $3$-strings or $4$-strings,
we have $j\in \{7,8\}$.  However, then $v_j$ is a $(2,2,1)$-vertex, which contradicts Corollary~\ref{cor-no22x}.
\end{proof}

\begin{lemma}\label{lemma-noeven}
A smallest counterexample contains no even cycles of length at most $8$.
\end{lemma}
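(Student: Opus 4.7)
By Lemmas~\ref{lemma-girth} and~\ref{lemma-no6}, a smallest counterexample contains no $4$- or $6$-cycle, so only the $8$-cycle case remains. Suppose for contradiction that $K=v_1v_2\ldots v_8$ is an $8$-cycle in a smallest counterexample $G$. My first step is to collect structural properties of $K$: every chord of $K$ and every short configuration linking two $K$-vertices at small cyclic distance creates either a triangle or $4$-cycle (forbidden by the girth), a $6$-cycle (forbidden by Lemma~\ref{lemma-no6}), or a pair of cycles on at most $9$ vertices (forbidden by Lemma~\ref{lemma-disj5}). This shows that $K$ is induced, that any two vertices of $K$ at cyclic distance at most $3$ share no neighbor outside $K$, and that no $K$-bad path (a path of length $3$ with both endpoints in $K$ and internal vertices off $K$) joins two vertices at cyclic distance $1$ or $3$ in $K$.

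Following the pattern of Lemma~\ref{lemma-no6}, I will form $G_1$ by identifying $v_2$ with $v_8$, $v_3$ with $v_7$, and $v_4$ with $v_6$, yielding merged vertices $w_1,w_2,w_3$; after removing parallel edges the image of $K$ becomes the path $v_1w_1w_2w_3v_5$. A case check on how a triangle in $G_1$ could appear---purely among images of $K$-vertices, or through an outside vertex adjacent to two merged preimages---reduces to a configuration already excluded. Every homomorphism $\psi$ of $G_1$ to $C_5$ then extends to $G$ by setting $\psi(v_2)=\psi(v_8)=\psi(w_1)$, $\psi(v_3)=\psi(v_7)=\psi(w_2)$, and $\psi(v_4)=\psi(v_6)=\psi(w_3)$, so $G_1$ has no homomorphism to $C_5$ and contains a $5/2$-critical subgraph $G_2$ that must use at least one $w_i$. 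By the minimality of $G$, $p(G_2)\le 2$. I then reconstruct $G_3\subseteq G$ by splitting each $w_i\in V(G_2)$ back into its two preimages and adjoining all of $K$, and express $p(G_3)$ in terms of $p(G_2)$, the subset of merged vertices lying in $V(G_2)$, and the fixed $K$-vertices already in $V(G_2)$. In every sub-case the resulting bound on $p(G_3)$ contradicts Lemma~\ref{lemma-light}, with Lemma~\ref{lemma-nosmall} disposing of the borderline case where $G=G_3$ has small order.

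The main obstacle will be verifying triangle-freeness of $G_1$ in the presence of $K$-bad paths at cyclic distance $2$ or $4$, which are not directly ruled out: a distance-$2$ bad path creates a $5$-cycle that together with $K$ uses exactly $10$ vertices, meeting the bound of Lemma~\ref{lemma-disj5}, and similarly a distance-$4$ bad path gives a $7$-cycle with the same property. I plan to dispose of each such bad path separately: the $5$-cycle created by a distance-$2$ bad path is a cell of $G$ whose attachment pattern to $K$ forces either a cell of degree at most three (contradicting Lemma~\ref{lemma-wtcell3}) or a low-potential subgraph disposable by Lemma~\ref{lemma-light}, while a distance-$4$ bad path creates a rigid theta-like structure with three internally disjoint paths of lengths $3,4,4$ between opposite vertices of $K$, from which a similar contradiction should follow by enlarging the subgraph of $G$ under consideration.
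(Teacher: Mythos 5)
Your overall strategy---folding $K$ by identifying $v_2$ with $v_8$, $v_3$ with $v_7$, $v_4$ with $v_6$---is the same as the paper's, but the step you yourself flag as the main obstacle is a genuine gap, and your plan for closing it does not work. A single $K$-bad path joining two vertices of $K$ at cyclic distance $2$ or $4$ cannot be excluded by the tools you invoke: the subgraph $K\cup P$ has $10$ vertices and $11$ edges, so its potential is exactly $6$, which is fully compatible with Lemma~\ref{lemma-light}; the $5$-cycle created by a distance-$2$ bad path, say $v_1v_2xyv_8$, is a cell whose degree is in general at least four (strings leave it at $v_2$ and at $v_8$, and nothing prevents two of $v_1,x,y$ from having degree three), so Lemma~\ref{lemma-wtcell3} does not apply; and the theta graph produced by a distance-$4$ bad path has exactly $10$ vertices and potential $6$, so neither Lemma~\ref{lemma-disj5} nor Lemma~\ref{lemma-light} is violated. (Note that $E_1$ itself contains an $8$-cycle together with a distance-$2$ bad path, so such a local configuration is not inherently contradictory.) The paper proves something weaker but sufficient: if there were \emph{two} $K$-bad paths, their union with $K$ has $e(H)=n(H)+2$, hence $p(H)\le 4$, and a case analysis using Lemmas~\ref{lemma-light}, \ref{lemma-wtcell3}, \ref{lemma-opp5c}, Corollary~\ref{cor-no22x} and Lemma~\ref{lemma-nosmall} rules this out; so $G$ has at most one $K$-bad path, and the labels are then chosen so that this single bad path is incident with $v_1$, which is not identified with anything, giving triangle-freeness of $G_1$. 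Without this ``at most one bad path, then relabel'' step, your construction of $G_1$ is unjustified.

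The reconstruction step is also oversimplified. The paper does not always re-attach all of $K$: when $W=\{w_2,w_3\}$ or $W=\{w_2\}$ it adds only the path $v_3v_2v_1v_8v_7$, respectively $v_2v_1v_8$, precisely because this keeps the potential increment small while leaving $v_4,v_6$ (respectively $v_3,v_4,v_6,v_7$) outside $G_3$, which is what rules out $G\in\{G_3\}\cup P_2(G_3)\cup P_3(G_3)\cup Q'(G_3)$; if you add all of $K$ you must also account for edges of $K$ that survive in $G_2$ as edges between merged vertices, which spoils a naive edge count. Moreover, several cases are not settled by a potential bound alone: for $W=\{w_3\}$ the paper obtains $p(G_4)\le 5-5a$ and still has to exclude $G\in P_2(G_4)\cup P_3(G_4)\cup Q'(G_4)$ by an argument on the degrees of $v_1,v_2,v_8$ together with Lemma~\ref{lemma-opp5c}, and for $|W|=3$ the borderline case survives the potential count and is killed only by showing $a=1$, $G_2\in\{E_1,E_2\}$, $n(G)\le 14$ and invoking Lemma~\ref{lemma-nosmall}. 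So the claim that ``in every sub-case the resulting bound on $p(G_3)$ contradicts Lemma~\ref{lemma-light}'' conceals most of the actual work, in addition to the unresolved bad-path issue above.
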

\begin{proof}
By Lemmas~\ref{lemma-girth} and \ref{lemma-no6}, every even cycle in a smallest counterexample has length at least $8$.
For a contradiction, suppose that $K=v_1v_2\ldots v_8$ is an $8$-cycle in a smallest counterexample $G$.
By Lemma~\ref{lemma-disj5}, $K$ is an induced cycle and no two vertices of $K$ have a common neighbor outside of $K$.

Suppose that $G$ contains two distinct $K$-bad paths $P_1$ and $P_2$.
Let $H=K\cup P_1\cup P_2$.  Note that $n(H)\le 12$ and $e(H)=n(H)+2$, and thus $p(H)\le 4$.  By Lemma~\ref{lemma-light},
we have $G=H$ or $G\in P_3(H)$.
If $G\in P_3(H)$, then $p(G)\le 2$, which contradicts Lemma~\ref{lemma-nosmall}.
Suppose that $G=H$.  Let us distinguish two cases.
\begin{itemize}
\item If the paths $P_1$ and $P_2$ share an internal vertex, then since no two vertices of $K$ have a common neighbor outside of $K$,
$P_1$ intersects $P_2$ in exactly one edge.  By symmetry and Lemmas~\ref{lemma-girth} and \ref{lemma-no6}, we can assume that
$P_1=v_1xyv_3$ and $P_2=v_1xzv_i$ for some $i\in\{5,7\}$.  However, the cell $v_1xyv_3v_2$ contradicts Lemma~\ref{lemma-wtcell3}.
\item Therefore, $P_1$ and $P_2$ have no internal vertex in common.  By Lemmas~\ref{lemma-girth} and \ref{lemma-no6} and by symmetry, we can assume that $P_1=v_1xyv_i$ for some $i\in\{3,5\}$.
Since $G$ does not contain a $k$-string with $k\ge 3$, $P_2$ is incident with $v_j$ for some $j\in\{6,7,8\}$.
Hence, if $i=3$, then the cell $v_1v_2v_3yx$ contradicts Lemma~\ref{lemma-wtcell3}; consequently, $i=5$.
If $j\in \{6,8\}$, then by Lemma~\ref{lemma-girth} and Corollary~\ref{cor-no22x}, we conclude that $P_2=v_6wzv_8$.
However, then the cell $v_6v_7v_8zw$ contradicts Lemma~\ref{lemma-wtcell3}.  We conclude that $j=7$.  By a symmetrical argument,
$P_2$ is also incident with $v_3$.  However, then $G$ contains no $5$-cycles, and $P_2$ contradicts Lemma~\ref{lemma-opp5c}.
\end{itemize}

Therefore, $G$ contains at most one $K$-bad path.  Choose the labels of the vertices of $K$ so that $v_1$ has degree at least three
and if $G$ contains a $K$-bad path, then the path is incident with $v_1$.

Let $G_1$ be the graph obtained from $G$ by identifying $v_2$ with $v_8$ to a new vertex $w_2$, $v_3$ with $v_7$ to a new vertex $w_3$,
and $v_4$ with $v_6$ to a new vertex $w_4$.  Note that $G_1$ is triangle-free.
Observe that $G_1$ does not have a homomorphism to $C_5$,
and let $G_2$ be a $5/2$-critical subgraph of $G_1$.  Since $G$ is a smallest counterexample and $n(G_2)<n(G)$, we have $p(G_2)\le 2$.
Let $W=\{w_2,w_3,w_4\}\cap V(G_2)$ and let $G_3$ be obtained from $G_2$ by splitting all vertices of $W$ to their original vertices.
Let $a_1=|\{v_1\}\cap V(G_2)|$ and $a=|\{v_1,v_5\}\cap V(G_2)|$.
We distinguish several cases according to $W$.  Note that $G_2\not\subseteq G$, and thus $W\neq\emptyset$.
\begin{itemize}
\item If $|W|=3$, then let $G_4$ be the graph obtained from $G_3$ by adding $K$.
Note $n(G_4)=n(G_2)+5-a$ and $e(G_4)\ge e(G_2)+6-a$.  Consequently, $p(G_4)\le p(G_2)+1-a\le 3-a$.
By Lemma~\ref{lemma-light}, it follows that $G_4=G$. Since $v_1$ has degree at least three, we have $a\ge 1$.
By Lemma~\ref{lemma-light}, $a=1$ and $p(G_2)=2$,
i.e., $G_2\in \{E_1,E_2\}$.  We conclude that $n(G)\le 14$ and $p(G)\le 2$, which contradicts Lemma~\ref{lemma-nosmall}.

\item If $W=\{w_2,w_4\}$, then let $G_4$ be the graph obtained from $G_3$ by adding $K$.  Note that $n(G_4)=n(G_2)+6-a$,
$e(G_4)\ge e(G_2)+8-a$, and $p(G_4)\le p(G_2)-2-a\le 0$, which contradicts Lemma~\ref{lemma-light}.

\item If $W=\{w_2,w_3\}$, then let $G_4$ be the graph obtained from $G_3$ by adding the path $v_3v_2v_1v_8v_7$.
We have $n(G_4)=n(G_2)+3-a_1$, $e(G_4)\ge e(G_2)+3-a_1$ and $p(G_4)\le p(G_2)+3-a_1\le 5$.  Since $v_4,v_6\not\in V(G_4)$, observe that
$G\not\in \{G_4\}\cup P_2(G_4)\cup P_3(G_4)\cup Q'(G_4)$.  This contradicts Lemma~\ref{lemma-light} and Corollary~\ref{cor-simplight}.

\item The case $W=\{w_3,w_4\}$ is excluded symmetrically.

\item If $W=\{w_3\}$, then let $G_4$ be obtained from $G_3$ by adding $K$.
We have $n(G_4)=n(G_2)+7-a$, $e(G_4)=e(G_2)+8$ and $p(G_4)=p(G_2)+3-5a\le 5-5a$.
By Lemma~\ref{lemma-light} and Corollary~\ref{cor-simplight}, we have $a=0$ and $G\in\{G_4\}\cup P_2(G_4)\cup P_3(G_4)\cup Q'(G_4)$.
Let $R=E(G)\setminus E(G_4)$.

Since $a=0$, we have $v_1\not\in V(G_2)$, and thus $v_1$ has degree two in $G_4$.
Since $v_1$ has degree at least three in $G$, $v_1$ is incident with an edge of $R$, and thus $G\neq G_4$.
Since $G$ does not contain $3$-strings, another edge of $R$ is incident with $v_j$ for some $j\in\{4,5,6\}$.
Since $v_1$ and $v_j$ do not have a common neighbor outside of $K$, $G\not\in P_2(G_4)$.

Let us consider the case that $G\in Q'(G_4)$, that is, $G$ is obtained from $G_4$ by adding a vertex $z$,
two $1$-strings joining $z$ to two adjacent vertices $x_1$ and $x_2$ in $G_4$
such that $x_1$ has degree at least three in $G_4$, and a $2$-string joining $z$ to another vertex $x_3\in V(G_4)$.
Note that $\{v_1,v_j\}\subset\{x_1,x_2,x_3\}$.  Since $x_1$ has degree at least $3$ in $G_4$, and $v_1$, $v_2$, $v_8$ and $v_j$
have degree two, it follows that $x_1$ is adjacent to $v_j$ and $x_3=v_1$.

Therefore, if $G\in P_3(G_4)\cup Q'(G_4)$, then $v_1$ is incident with a $2$-string, and by Lemma~\ref{lemma-opp5c},
the path $v_2v_1v_8$ is contained in a cell.
This is not possible, since $v_2$ and $v_8$ have degree two and $K$ is an induced cycle.

\item If $W=\{w_2\}$, then
let $G_4$ be the graph obtained from $G_3$ by adding the path $v_2v_1v_8$.
We have $n(G_4)=n(G_2)+2-a_1$, $e(G_4)\ge e(G_2)+2-a_1$ and $p(G_4)\le p(G_2)+2-a_1\le 4$.
Since $v_3,v_4,v_6,v_7\not\in V(G_4)$, we have $G\not\in \{G_4\}\cup P_3(G_4)$.
This contradicts Lemma~\ref{lemma-light}.

\item The case $W=\{w_4\}$ is excluded symmetrically.
\end{itemize}
\end{proof}

\begin{corollary}\label{cor-idok}
If $G$ is a smallest counterexample and $G'$ is obtained from $G$ by identifying two of its vertices, then
$E_1\not\subseteq G'$ and $E_2\not\subseteq G'$.
\end{corollary}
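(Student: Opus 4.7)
The plan is a case analysis on the image of the identified vertex, leveraging the tight cycle restrictions already established (girth at least $5$, no $6$-cycles, no $8$-cycles, and $n(H)\ge 10$ for every subgraph $H$ of $G$ containing two distinct cycles).

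Let $u,v\in V(G)$ be identified to form the vertex $z\in V(G')$, and suppose for contradiction that $E_i\subseteq G'$ for some $i\in\{1,2\}$. First I would dispatch the easy case $z\notin V(E_i)$: then $E_i\subseteq G$ outright, and since $E_1$ contains three $5$-cycles through its central degree-$3$ vertex while $E_2$ contains two $5$-cycles sharing the edge $uv$, in either case two distinct $5$-cycles of $G$ share a vertex, contradicting Lemma~\ref{lemma-disj5}.

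Otherwise $z\in V(E_i)$. Letting $H\subseteq G$ be the $11$-vertex subgraph obtained by splitting $z$ back into $u$ and $v$ and distributing the edges of $E_i$ at $z$ between them, I would enumerate subcases up to the symmetries of $E_i$, indexed by the orbit of $z$ and the partition of its edges. In every subcase $H$ retains enough cycle structure to violate one of the preceding lemmas. For $E_1$: if $z$ is a degree-$2$ vertex of the outer $9$-cycle, only one of the three $5$-cycles through the central vertex $c$ is destroyed, so the other two remain and share $c$, contradicting Lemma~\ref{lemma-disj5}; if $z=c$ or $z$ is one of the three degree-$3$ vertices on the $9$-cycle, then either a $5$-cycle survives and shares $c$ with another surviving $5$-cycle, or both $5$-cycles through $z$ break but the surviving $9$-cycle and surviving third $5$-cycle share a path of length $3$ and their symmetric difference is an $8$-cycle, contradicting Lemma~\ref{lemma-noeven}. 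The analysis for $E_2$ is analogous, exploiting the extra path $x_2z_1z_2y_2$: surviving $7$-cycles or $8$-cycles combine with a surviving $5$-cycle either to give a forbidden $6$- or $8$-cycle, or to produce a two-cycle subgraph on at most $9$ vertices.

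The main obstacle is simply the bookkeeping: each of $E_1,E_2$ has several vertex-orbits under its automorphism group, and each degree-$3$ vertex admits three essentially distinct edge-partitions. I expect, however, that the large symmetry groups of $E_1$ and $E_2$ reduce the list of genuinely distinct subcases to a short one, and that the sharper $n(H)\ge 10$ form of Lemma~\ref{lemma-disj5} handles the borderline situations in which the cycles arising in $H$ have no immediately apparent forbidden length.
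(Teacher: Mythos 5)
Your proposal is correct and takes essentially the paper's route: lift the copy of $E_i$ back to $G$ by splitting the identified vertex, and read off a contradiction from the surviving cycle structure via Lemmas~\ref{lemma-disj5} and~\ref{lemma-noeven}. The paper compresses your entire case analysis (including the $E_2$ cases you leave as ``analogous'') into the single observation that, however a vertex of $E_1$ or $E_2$ is split, some $8$-cycle of $E_1$ or $E_2$ survives the split, so Lemma~\ref{lemma-noeven} alone finishes the argument.
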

\begin{proof}
This follows from Lemma~\ref{lemma-noeven}, since splitting any vertex in $E_1$ or $E_2$ results in a graph containing an $8$-cycle.
\end{proof}

We now need another result similar to Lemma~\ref{lemma-opp5c}.
\begin{lemma}\label{lemma-opp7c}
Let $G$ be a smallest counterexample, let $v\in V(G)$ have degree four, and let $P_1$, \ldots, $P_4$ be the strings incident with $v$.
If $P_1$ and $P_2$ have length at least two and $P_3$ and $P_4$ have length at least three, then $P_1\cup P_2$ is
contained either in a cell or a $7$-cycle.
\end{lemma}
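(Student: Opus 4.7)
I will argue by contradiction. Assume $P_1 \cup P_2$ is contained in no cell and no $7$-cycle of $G$. By Observation~\ref{obs-simp} every string has length at most $3$, so $P_3$ and $P_4$ are $2$-strings; write $P_3 = vx_3y_3z_3$, $P_4 = vx_4y_4z_4$, let $\ell_i \in \{2, 3\}$ denote the edge-length of $P_i$ for $i = 1, 2$, and let $u_i$ be the end of $P_i$ opposite $v$. The degenerate case $u_1 = u_2$ is handled first: for $\ell_1 + \ell_2 \in \{4, 6\}$ it gives a forbidden $4$- or $6$-cycle (Lemma~\ref{lemma-girth}, Lemma~\ref{lemma-no6}), while for $\ell_1 + \ell_2 = 5$ the path $P_1 \cup P_2$ itself is a cell. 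Similarly, if $u_1 u_2 \in E(G)$ and $\ell_1 + \ell_2 \in \{5, 6\}$, the edge completes a cell or $7$-cycle. So we may assume $u_1 \ne u_2$ and $u_1 u_2 \notin E(G)$.

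Construct an auxiliary graph $G_1$ from $G$ by deleting $v$ and every internal vertex of $P_1, P_2, P_3, P_4$, and then either identifying $u_1$ with $u_2$ into a vertex $w$ (if $\ell_1 = \ell_2 = 2$) or inserting the edge $u_1 u_2$ (otherwise); this parallels the construction in Lemma~\ref{lemma-opp5c}. A triangle of $G_1$ meeting the new structure produces a $u_1$--$u_2$ walk in $G$ disjoint from $v$ and the string interiors, of length $2$ (a common neighbor, in either modification) or, only when identifying, of length $3$ (neighbors $a \in N(u_1), b \in N(u_2)$ with $ab \in E$). Closing with $P_1 \cup P_2$ gives a cycle of length $\ell_1 + \ell_2 + 2 \in \{6, 7, 8\}$ or $\ell_1 + \ell_2 + 3 = 7$, forbidden respectively by Lemma~\ref{lemma-no6}, our standing assumption, and Lemma~\ref{lemma-noeven}. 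Other triangles lie in $G$, contradicting Lemma~\ref{lemma-girth}; hence $G_1$ is triangle-free.

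Next, every homomorphism $\psi: G_1 \to C_5$ extends to $G$. The color $\psi(v)$ must avoid $\psi(z_3)$ and $\psi(z_4)$ (from the $2$-strings $P_3, P_4$) and, for each $i \in \{1, 2\}$, either $\psi(u_i)$ (if $P_i$ is a $2$-string) or both $C_5$-neighbors of $\psi(u_i)$ (if $P_i$ is a $1$-string). A direct case check over $(\ell_1, \ell_2) \in \{2,3\}^2$ combined with the imposed modification (forcing $\psi(u_1) = \psi(u_2)$ under identification, or $\psi(u_1)\psi(u_2) \in E(C_5)$ under edge addition) confirms that at least one of the five colors remains available, and extensions along each string are routine. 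This yields a homomorphism $G \to C_5$, contradicting $5/2$-criticality. So $G_1$ has no such homomorphism and contains a $5/2$-critical subgraph $G_2 \subseteq G_1$. Since $v \notin V(G_2)$ and $G$ is critical, $G_2 \not\subseteq G$, so $w \in V(G_2)$ or $u_1 u_2 \in E(G_2)$; by minimality $p(G_2) \le 2$, with $p(G_2) \le 1$ unless $G_2 \in \{E_1, E_2\}$.

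Reconstruct $G_3 \subseteq G$ from $G_2$ by undoing the modification (splitting $w$ back, or removing $u_1 u_2$) and reinserting $P_1 \cup P_2$ through $v$. Direct counting yields $p(G_3) = p(G_2) + c$ with $c = 4$ if $\ell_1 + \ell_2 \in \{4, 5\}$ and $c = 5$ if $\ell_1 + \ell_2 = 6$, so $p(G_3) \le 7$. Since $G$ is obtained from $G_3$ by attaching two disjoint $2$-strings at $v \in V(G_3)$, $G$ lies in none of $\{G_3\} \cup P_2(G_3) \cup P_3(G_3) \cup Q(G_3) \cup E_1(G_3) \cup E_2(G_3)$ (the last two by Corollary~\ref{cor-simplight}), so Lemma~\ref{lemma-light} forces $p(G_3) \ge 6$. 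The narrow window pins $p(G_2) \in \{1, 2\}$, and an explicit count $n(G) = n(G_2) + 8$ or $9$ then combines with Lemma~\ref{lemma-nosmall} to deliver a contradiction in each subcase (the tight subcase $\ell_1 = \ell_2 = 3$, $p(G_2) = 2$ further exploits the fact that a cell inherited from the copy of $E_1$ or $E_2$ embedded in $G_3$ retains degree at most three in $G$, violating Lemma~\ref{lemma-wtcell3}). The main obstacles are the uniform coloring extension across all four $(\ell_1, \ell_2)$ cases and this last structural wrap-up exploiting the cell-degree bound.
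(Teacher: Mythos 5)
Your overall framework (delete around $v$, modify, extend a homomorphism, take a $5/2$-critical subgraph $G_2$, rebuild $G_3\subseteq G$ and compare potentials against Lemma~\ref{lemma-light}) is the same as the paper's, and your homomorphism-extension and triangle-freeness checks are essentially fine (modulo small bookkeeping slips in the degenerate cases, e.g.\ $u_1u_2\in E(G)$ with $\ell_1+\ell_2=5$ completes a $6$-cycle, excluded by Lemma~\ref{lemma-no6}, not ``a cell or $7$-cycle''). The genuine gap is in the closing potential argument. The paper never deletes whole strings: it truncates $P_1,\dots,P_4$ to subpaths of lengths exactly $2,2,3,3$ from $v$ and \emph{always} identifies the two vertices at distance two from $v$ on $P_1,P_2$. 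This has two consequences you lose: the reconstruction increment is always exactly $+4$, and Corollary~\ref{cor-idok} applies (the modification is an identification inside $G$), so $G_2\notin\{E_1,E_2\}$, $p(G_2)\le 1$, hence $p(G_3)\le 5$, which flatly contradicts the bound $p(G_3)\ge 6$. In your edge-addition branches ($\ell_1+\ell_2\in\{5,6\}$) neither consequence is available: Corollary~\ref{cor-idok} says nothing about adding an edge, so $G_2\in\{E_1,E_2\}$ with $u_1u_2\in E(G_2)$ is not excluded, and with $p(G_2)\le 2$ you only get $p(G_3)\le 6$ or $7$, perfectly consistent with $p(G_3)\ge 6$. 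No contradiction is reached; indeed when $\ell_1=\ell_2=3$ even $p(G_2)=1$ leaves $p(G_3)=6$ admissible.

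Your proposed rescue does not work. Since $G_2$ is only \emph{some} critical subgraph of $G_1$, the graph $G$ may contain arbitrarily many vertices and edges outside $G_3$; so the assertion that $G$ is obtained from $G_3$ by attaching two $2$-strings at $v$, the ``explicit count $n(G)=n(G_2)+8$ or $9$'', and any application of Lemma~\ref{lemma-nosmall} (which needs $p(G)=2$ and $n(G)\le 21$, neither of which you know) are unjustified. Likewise, the degrees in $G$ of vertices of a cell inherited from an embedded copy of $E_1$ or $E_2$ are not bounded, so the appeal to Lemma~\ref{lemma-wtcell3} in the tight subcase is unsupported. (The conclusion $G\notin X(G_3)$ that you draw from the over-strong attachment claim is still valid, because $v$ has degree $2$ in $G_3$ and $4$ in $G$ while every augmentation in $X(G_3)$ adds at most one edge at any old vertex; but that only yields the lower bound $p(G_3)\ge 6$, not a contradiction.) Concretely: the case $\ell_1=\ell_2=3$ can be killed directly without any modification (as in the first paragraph of Lemma~\ref{lemma-wt4}); the case $\ell_1=\ell_2=2$ closes once you invoke Corollary~\ref{cor-idok}; but the mixed case $\ell_1+\ell_2=5$ needs a different construction — for instance the paper's truncation to subpaths, which reduces every case to the identification situation — and as written your argument does not dispose of it.
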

\begin{proof}
Let $P'_1$, \ldots, $P'_4$ be the subpaths of $P_1$, \ldots, $P_4$, respectively, with common endvertex $v$, such that
$P'_1$ and $P'_2$ have length two and $P'_3$ and $P'_4$ have length three.
Let $v_1$, \ldots, $v_4$ be the endvertices of $P'_1$, \ldots, $P'_4$ distinct from $v$.  Suppose for a contradiction that $P'_1\cup P'_2$ is
contained neither in a cell nor a $7$-cycle.  Let $G_1$ be the graph obtained from $G$ by removing $v$ and the internal vertices of $P'_1$, \ldots, $P'_4$
and identifying $v_1$ with $v_2$ to a new vertex $z$.  Note that $G_1$ is triangle-free, and by Corollary~\ref{cor-idok}, $E_1\not\subseteq G_1$
and $E_2\not\subseteq G_1$.

Suppose that $G_1$ has a homomorphism $\psi$ to a $5$-cycle $C=c_1\ldots c_5$, where without loss of generality $\psi(z)=c_1$.
Set $\psi(v_1)=\psi(v_2)=c_1$ and choose $\psi(v)\in\{c_1,c_3,c_4\}$ distinct from $\psi(v_4)$ and $\psi(v_5)$.
Observe that $\psi$ extends to a homomorphism from $G$ to $C$, a contradiction.
Hence, $G_1$ has no such homomorphism.  Let $G_2$ be a $5/2$-critical subgraph of $G_1$.  Since $G_2\not\in\{E_1,E_2\}$, the minimality of $G$ implies
$p(G_2)\le 1$.

Note that $G_2$ contains $z$, and let $G_3$ be obtained
from $G_2$ by splitting $z$ back to $v_1$ and $v_2$ and adding the path $P'_1\cup P'_2$.  We have $p(G_3)=p(G_2)+4\le 5$.
However, since $G_3$ does not contain the internal vertices of $P'_3$ and $P'_4$, we have $G\not\in\{G_3\}\cup P_2(G_3)\cup P_3(G_3)\cup Q'(G_3)$.
This contradicts Lemma~\ref{lemma-light} and Corollary~\ref{cor-simplight}.
\end{proof}

Using this lemma, we can restrict the weight of vertices of degree $4$.

\begin{lemma}\label{lemma-wt4}
If $G$ is a smallest counterexample, then every vertex of $G$ of degree $4$ not contained in a cell has weight at most $6$.
\end{lemma}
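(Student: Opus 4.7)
The plan is to show that both heavy configurations, namely $(2,2,2,2)$- and $(2,2,2,1)$-vertices, are excluded for a degree-$4$ vertex $v$ not in a cell. Since Observation~\ref{obs-simp} rules out $k$-strings with $k\ge 3$, every string at $v$ has length at most three, so $wt(v)\le 8$; if $wt(v)\ge 7$ then $v$ is either a $(2,2,2,2)$-vertex or a $(2,2,2,1)$-vertex, and I will handle the two configurations separately, using Lemma~\ref{lemma-opp7c} as the main engine.

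For the $(2,2,2,2)$-case, write the four $2$-strings as $P_i=vw_ix_iy_i$ for $i\in\{1,2,3,4\}$. For each of the six pairs $\{P_i,P_j\}$, the other two $2$-strings have length three and can serve as $P_3,P_4$ in Lemma~\ref{lemma-opp7c}, so the $4$-edge subpath $x_iw_ivw_jx_j$ lies in a cell or a $7$-cycle. The cell alternative would require an edge $x_ix_j$, but $x_i$ has degree $2$ with only $w_i$ and $y_i$ as neighbors (and $y_i\ne x_j$ because $y_i$ has degree at least three while $x_j$ is a degree-$2$ internal vertex), so it fails; hence a $7$-cycle, whose three additional edges can only be $x_iy_i$, $y_iy_j$, and $y_jx_j$, forcing $y_iy_j\in E(G)$. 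Applying this to all six pairs produces $K_4$ on $\{y_1,y_2,y_3,y_4\}$, contradicting Lemma~\ref{lemma-girth}.

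For the $(2,2,2,1)$-case, let $P_\delta=vw_\delta y_\delta$ be the $1$-string and $P_i=vw_ix_iy_i$ for $i\in\{\alpha,\beta,\gamma\}$ the $2$-strings. Applying Lemma~\ref{lemma-opp7c} to each pair $(P_\delta,P_i)$ with the other two $2$-strings as $P_3,P_4$, the cell alternative would force $y_\delta=y_i$ (the only way to close the length-$4$ subpath $y_\delta w_\delta vw_ix_i$ into a $5$-cycle, given the degree-$2$ constraint on $x_i$), which would place $v$ in the cell $vw_\delta y_\delta x_iw_iv$ contrary to assumption. Hence the $7$-cycle alternative holds, which forces a common neighbor $b_i$ of $y_\delta$ and $y_i$ in $G$. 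If $b_\alpha=b_\beta$, then $vw_\alpha x_\alpha y_\alpha b_\alpha y_\beta x_\beta w_\beta v$ is an $8$-cycle contradicting Lemma~\ref{lemma-noeven}, so $b_\alpha,b_\beta,b_\gamma$ are pairwise distinct; the same $8$-cycle argument shows that no two of $y_\alpha,y_\beta,y_\gamma$ share any neighbor in $V(G)$, while each edge $y_iy_\delta$ would produce the triangle $y_iy_\delta b_i$ forbidden by Lemma~\ref{lemma-girth}.

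I would finish by constructing $G_1$ from $G$ by deleting $v$ together with the seven internal vertices $w_\delta,w_\alpha,w_\beta,w_\gamma,x_\alpha,x_\beta,x_\gamma$ of the four strings at $v$ and then identifying $y_\alpha,y_\beta,y_\gamma,y_\delta$ into a single vertex $z$. Triangle-freeness of $G_1$ follows from the adjacency restrictions above, and any homomorphism $\psi\colon G_1\to C_5$ extends to $G$ by choosing $\psi(v)$ to be one of the two vertices at distance $2$ from $\psi(z)$ in $C_5$ (as required by the $1$-string $P_\delta$) and distinct from $\psi(z)$ (as required by the three $2$-strings). Hence $G_1$ has no $C_5$-homomorphism, and its $5/2$-critical subgraph $G_2$ satisfies $n(G_2)<n(G)$; the resulting potential computation via Lemma~\ref{lemma-light} together with Corollaries~\ref{cor-simplight} and~\ref{cor-idok} yields the contradiction. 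The main obstacle will be the subcase in which some $y_iy_j\in E(G)$ for distinct $i,j\in\{\alpha,\beta,\gamma\}$: this creates a cell $y_\delta b_iy_iy_jb_j$ which must be handled separately using Lemma~\ref{lemma-disj5} on the vertex-disjointness of cells together with the structural lemmas already established.
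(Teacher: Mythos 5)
Your reduction to the $(2,2,2,2)$ and $(2,2,2,1)$ configurations is fine, and the first half matches the paper: the $(2,2,2,2)$ case does go through (though the paper dispatches it in one line, deleting $v$ and the string interiors and recoloring, since $\psi(v)$ only needs to avoid the four colors of the friends; your route via Lemma~\ref{lemma-opp7c} also needs the remark that the $y_i$ are pairwise distinct, which follows from Lemma~\ref{lemma-no6}), and your derivation of the common neighbors $b_i$ of $y_\delta$ and $y_i$ is exactly the paper's first step in the $(2,2,2,1)$ case. The genuine gap is in your main reduction: identifying \emph{all four} friends $y_\alpha,y_\beta,y_\gamma,y_\delta$ into one vertex $z$ and asserting that $G_1$ is triangle-free. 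A triangle at $z$ arises from any path of length exactly three in $G$ between two of the identified vertices, and nothing you proved excludes such paths: your $8$-cycle argument only rules out common neighbors (length-two paths) among $y_\alpha,y_\beta,y_\gamma$, and the triangle-with-$b_i$ argument only rules out the edges $y_iy_\delta$; length-three paths between two $2$-string friends, and between $y_\delta$ and a $y_i$, remain entirely possible. This is not cosmetic: if $G_1$ contains a triangle, its $5/2$-critical subgraph $G_2$ may be $C_3$, which is excluded from the minimality hypothesis and has $p(C_3)=3$, so the bound $p(G_2)\le 2$ and the whole Lemma~\ref{lemma-light} bookkeeping collapse.

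The paper's proof is organized precisely around this obstacle. It first shows, via Lemma~\ref{lemma-disj5}, that there is \emph{no} path of length three between the $1$-string friend and any $2$-string friend (such a path would create a cell sharing the path $v_ix_iv_4$ with the $7$-cycle through $v$). It then splits into cases: if all three pairs of $2$-string friends are at pairwise distance at most three, it builds the explicit subgraph $H$ with $e(H)\ge n(H)+5$ and $n(H)\le 21$, hence $p(H)\le 1$, contradicting Lemma~\ref{lemma-light}; otherwise it identifies only a pair of $2$-string friends at distance at least four together with the $1$-string friend, deliberately leaving the third $2$-string friend un-identified and absorbing it into the recoloring by choosing $\psi(v)\ne\psi(v_3)$. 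Your proposal has no substitute for either device; the ``main obstacle'' you defer ($y_iy_j\in E(G)$, which in fact produces a loop at $z$, not a triangle) is only a small piece of the problem, and the closing potential computation and endgame (in the paper: $a=0$, $G_2\in\{E_1,E_2\}$, Corollary~\ref{cor-idok} forcing $v_1$ to have degree three, then Lemma~\ref{lemma-opp5c} and Lemma~\ref{lemma-disj5}) are left unexecuted. As written, the argument fails whenever some two of $y_\alpha,y_\beta,y_\gamma$ are at distance at most three.
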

\begin{proof}
Let $v\in V(G)$ be a $(k_1, \ldots, k_4)$-vertex.  By Observation~\ref{obs-simp}, $k_1,\ldots, k_4\le 2$.
Suppose for a contradiction that $v$ has weight at least $7$, and thus $k_1,k_2,k_3=2$ and $k_4\ge 1$.
For $1\le i\le 4$, let $v_i$ be the friend of $v$ joined to it by the $k_i$-string $P_i$.
If $k_4=2$, then let $G_1$ be the graph obtained from $G$ by removing $v$ and the internal vertices of the incident
strings.  It is easy to see that any homomorphism $\psi$ from $G_1$ to $C_5$ can be extended to a homomorphism from $G$,
by choosing $\psi(v)$ distinct from $\psi(v_1)$, \ldots, $\psi(v_4)$.  This contradicts the assumption that $G$ is $5/2$-critical.

Hence, we have $k_4=1$.
Since $v$ is not contained in a cell and by Lemma~\ref{lemma-no6}, $v_1$, $v_2$, $v_3$, and $v_4$ are pairwise distinct.
Furthermore, Lemma~\ref{lemma-opp7c} implies that $P_1\cup P_4$, $P_2\cup P_4$, and $P_3\cup P_4$ are contained in $7$-cycles,
and thus for $i\in \{1,2,3\}$, there exists a common neighbor $x_i$ of $v_4$ and $v_i$.  By Lemma~\ref{lemma-noeven},
$x_1$, $x_2$ and $x_3$ are pairwise different. By Lemma~\ref{lemma-disj5}, there is no path of length $3$ between $v_i$ and $v_4$
for $i\in \{1,2,3\}$.  Suppose that for all $1\le i<j\le 3$, there exists a path $P_{ij}$ of length at most $3$ between $v_i$ and $v_j$.
Let $H$ be the union of the paths $P_1$, \ldots, $P_4$, $P_{12}$, $P_{23}$, $P_{13}$, $v_1x_1v_4$, $v_2x_2v_4$, and $v_3x_3v_4$.
By Lemma~\ref{lemma-noeven}, no two of $v_1$, $v_2$, and $v_3$ have a common neighbor. Observe that this implies
$e(H)\ge n(H)+5$ (even if two of the paths $P_{12}$, $P_{23}$, and $P_{13}$ may share a vertex, or contain some of the vertices $x_1$, $x_2$, and $x_3$).
Note that $n(H)\le 21$, and thus $p(H)\le 1$.  This contradicts Lemma~\ref{lemma-light}.

Hence, by symmetry, we can assume that the distance between $v_1$ and $v_2$ is at least $4$.
Let $G_1$ be the graph obtained from $G$ by removing $v$ and the internal vertices of its incident strings, and identifying $v_1$, $v_2$, and $v_4$
to a new vertex $w$.  Note that $G_1$ is triangle-free.

Suppose that there exists a homomorphism $\psi$ from $G_1$ to a $5$-cycle $C=c_1\ldots c_5$, where without loss of generality $\psi(w)=c_1$.
Choose $\psi(v)\in \{c_3,c_4\}$ distinct from $\psi(v_3)$,
and observe that $\psi$ extends to a homomorphism from $G$ to $C$.  This is a contradiction,
and thus there exists no homomorphism from $G_1$ to $C_5$.  Let $G_2$ be a $5/2$-critical subgraph of $G_1$.
By the minimality of $G$, we have $p(G_2)\le 2$.

Since $G_2\not\subseteq G$, we have $w\in V(G_2)$.  Let $G'_3$ be the graph obtained from $G_2$ by
splitting $w$ back to the vertices $v_1$, $v_2$, and $v_4$.  Let $G_3$ be the graph obtained from $G'_3$
by adding the paths $v_1x_1v_4$, $v_2x_2v_4$, $P_1$, $P_2$, and $P_4$.
Let $a=|\{x_1,x_2\}\cap V(G_2)|$.
Note that $n(G_3)=n(G_2)+10-a$ and $e(G_3)\ge e(G_2)+12-a$, and thus $p(G_3)\le p(G_2)+2-a\le 4-a$.
Since the internal vertices of $P_3$ do not belong to $G_3$, we have $G\neq G_3$, and by Lemma~\ref{lemma-light},
$G=G_3+P_3$ and $a=0$.  Furthermore, $p(G_2)=2$, and thus $G_2\in\{E_1,E_2\}$.

By Corollary~\ref{cor-idok}, each of $v_1$, $v_2$ and $v_4$ has degree one in $G'_3$, and thus $v_1$ has degree three in $G$.
By Lemma~\ref{lemma-opp5c}, this implies that the edge $v_1x_1$ is contained in a cell.  Since $a=0$, $x_1$ has degree two in $G$,
and thus this cell shares the path $v_1x_1v_4$ with the $7$-cycle formed by $v_1x_1v_4$ and $P_1\cup P_4$.
This contradicts Lemma~\ref{lemma-disj5}.
\end{proof}

Next, we consider other types of vertices with large weight.

\begin{lemma}\label{lemma-111}
Let $v$ be a $(2,2,1,1)$-vertex or a $(1,1,1)$-vertex in a smallest counterexample $G$.
If $v$ is not contained in a cell, then $v$ has at most one friend of degree $3$ that is joined with $v$ by a $1$-string and is not contained in a cell.
\end{lemma}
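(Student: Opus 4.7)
My plan is to argue by contradiction: suppose $v$ has two friends $u_1$ and $u_2$, each of degree $3$, each joined to $v$ by a $1$-string, with neither $u_1$ nor $u_2$ in a cell. Let $x_1,x_2$ be the corresponding degree-$2$ internal vertices, so that $vx_iu_i$ is a path in $G$ for $i\in\{1,2\}$. I handle the $(1,1,1)$ and $(2,2,1,1)$ cases in parallel, since the construction is the same. Form $G_1$ from $G$ by deleting $v$ together with all internal degree-$2$ vertices of the strings incident with $v$, and then identifying $u_1$ with $u_2$ into a single vertex $z$.

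I first check that $G_1$ is triangle-free. Non-$z$ triangles are excluded by Lemma~\ref{lemma-girth}. A triangle at $z$ would require either a common neighbor of $u_1$ and $u_2$ in $G-\{v,x_1,x_2\}$ (but such a neighbor together with the path $u_1x_1vx_2u_2$ would form a $6$-cycle in $G$, forbidden by Lemma~\ref{lemma-no6}), or an edge joining a neighbor of $u_1$ to a neighbor of $u_2$, producing a $7$-cycle through $v$; in the latter case I would argue separately using the facts that $u_1,u_2$ have degree $3$ and are not in cells (via Corollary~\ref{obs-v3} and Lemma~\ref{lemma-wtcell3}) that this configuration is impossible in a smallest counterexample. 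Assuming $G_1$ is triangle-free, I would then verify that any homomorphism $\psi\colon G_1\to C_5$ extends to $G$: the identification forces $\psi(u_1)=\psi(u_2)$, so the two $1$-string constraints at $v$ collapse to one, and a short casework in $C_5$ shows $\psi(v)$ can be chosen at distance $0$ or $2$ from $\psi(z)$ and compatibly with the other string constraints (in the $(2,2,1,1)$ case, the $2$-string conditions $\psi(v)\neq\psi(w_j)$ are weak enough to allow this). Consequently $G_1$ admits no homomorphism to $C_5$ either, so $G_1$ contains a $5/2$-critical subgraph $G_2$ with $z\in V(G_2)$, and by minimality of $G$, $p(G_2)\le 2$ (with $p(G_2)\le 1$ unless $G_2\in\{E_1,E_2\}$).

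Next, I reconstruct $G_3\subseteq G$ by splitting $z$ back into $u_1,u_2$ inside $G_2$ and re-adding $v,x_1,x_2$ together with, for each friend $f$ of $v$ with $f\in V(G_2)$, the internal vertices of the string from $v$ to $f$. A direct count gives $p(G_3)=p(G_2)+4-3s$ in the $(1,1,1)$ case, where $s=|\{u_3\}\cap V(G_2)|$, and an analogous formula $p(G_3)=p(G_2)+4-2t$ in the $(2,2,1,1)$ case with $t=|\{w_1,w_2\}\cap V(G_2)|$. I then apply Lemma~\ref{lemma-light} and Corollary~\ref{cor-simplight}: the bound on $p(G_3)$ forces $G$ either to coincide with $G_3$ (combined with $p(G_2)\le 2$ this pins down $p(G)\le 2$, and then Lemma~\ref{lemma-nosmall} and the explicit structure of $E_1,E_2$ give a contradiction from the degree-$3$ hypothesis on $u_1,u_2$, since the maximum degree in $E_1,E_2$ is $3$ and splitting $z$ cannot leave both $u_1,u_2$ with the requisite degree), or to be obtained from $G_3$ by a small path, $P_3$, or $Q'$-extension, in which case the added path meets $v$'s neighborhood in a way that violates Lemma~\ref{lemma-opp5c} or Lemma~\ref{lemma-wtcell3}.

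The main obstacle I anticipate is the triangle-at-$z$ subcase (equivalently, a $7$-cycle through $v$ in $G$), which sidesteps the clean identification argument and requires an independent structural analysis exploiting the no-cell and degree-$3$ hypotheses on $u_1,u_2$. The potential-counting step afterwards is routine but branches into several subcases depending on $s$ (or $t$) and on whether $G_2\in\{E_1,E_2\}$; each of these must be reconciled with Lemma~\ref{lemma-light}, Corollary~\ref{cor-simplight}, and Lemma~\ref{lemma-nosmall}.
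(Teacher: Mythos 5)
Your reduction is genuinely different from the paper's, and the difference is exactly where your argument breaks. The paper does not identify the two degree-$3$ friends; it deletes them and, for each friend $v_i$, identifies the two neighbors $Y_i$ of $v_i$ other than $x_i$ into a new vertex $w_i$. That choice is what makes the hypothesis ``$v_i$ is not contained in a cell'' pay off: a triangle at $w_i$ would come from a path of length three between the two vertices of $Y_i$, which together with $v_i$ gives a $5$-cycle through $v_i$, so triangle-freeness is immediate; and the extension works because both neighbors of $v_i$ receive the same color. In your construction (identify $u_1$ with $u_2$ into $z$) the problematic case is the one you yourself flag: a path $u_1abu_2$ of length three avoiding $v,x_1,x_2$, i.e.\ a $7$-cycle through $v$. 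Your claim that this configuration ``is impossible'' via Corollary~\ref{obs-v3} and Lemma~\ref{lemma-wtcell3} does not hold up. If $a$ and $b$ both have degree two, then $u_1abu_2$ is a $2$-string and Lemma~\ref{lemma-opp5c} applied to $u_1$ forces $u_1$ into a cell, a contradiction --- fine. But if, say, $a$ has degree at least three, none of the cited lemmas (nor anything else in the paper: $7$-cycles are never excluded) rules the configuration out. And you cannot simply proceed with a non-triangle-free $G_1$: the critical subgraph $G_2$ could then be $C_3$, which is exempt from the $p(G_2)\le 2$ bound, so the entire potential count collapses. This is a genuine gap, not a routine case to be ``argued separately,'' and it is precisely what the paper's alternative identification is engineered to avoid.

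There are also problems in your endgame even granting triangle-freeness. You need $G_2\notin\{E_1,E_2\}$ to get $p(G_2)\le 1$ (otherwise $s=0$ or $t=0$ gives only $p(G_3)\le 6$, no contradiction); that follows from Corollary~\ref{cor-idok} since $G_1$ arises by identification, but you never invoke it. More seriously, because you re-attach all strings at $v$, the case $G=G_3$ is structurally possible ($v$ regains its full degree), and your proposed contradiction there uses $E_1,E_2$-specific structure which no longer applies once those are excluded; with $p(G_2)=1$ and $s=1$ you get only $p(G)=2$ and $n(G)=n(G_2)+5$, which Lemma~\ref{lemma-nosmall} does not contradict when $n(G_2)\ge 17$. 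The paper sidesteps all of this: its $G_3$ keeps $v$ at degree two (only the path $v_1x_1vx_2v_2$ is added back), so $G\ne G_3$ is automatic, the counts give $p(G_3)\le 4$ or $\le 5$ cleanly, and the remaining possibilities are killed by Lemma~\ref{lemma-light}, Corollary~\ref{cor-simplight}, and (in the one surviving branch, $G_2\in\{E_1,E_2\}$ with $G\in P_2(G_3)$, $n(G)=18$) Lemma~\ref{lemma-nosmall}. If you want to salvage your route, you must either handle the $7$-cycle case by a separate reduction or switch to the paper's identification of the outer neighborhoods.
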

\begin{proof}
Let $d=\deg(v)$ and let $v_1$, \ldots, $v_d$ be the friends of $v$, where $v_1$ and $v_2$ are joined to $v$ by $1$-strings.
Suppose for a contradiction that $v_1$ and $v_2$ have degree $3$ and are not contained in a cell.
For $i\in\{1,2\}$, let $x_i$ be the common neighbor of $v_i$ and $v$, and let $Y_i$ be the set of neighbors of $v_i$ distinct from $x_i$.  
By Lemma~\ref{lemma-noeven}, $Y_1$ and $Y_2$ are disjoint and have no common neighbors.

Let $G_1$ be the graph obtained from $G$ by removing $v$, all internal vertices
of its incident strings, and for $i\in \{1,2\}$, removing $v_i$ and identifying the vertices of $Y_i$ to a new vertex $w_i$.
Since neither $v_1$ nor $v_2$ is contained in a cell, it follows that $G_1$ is triangle-free.

Suppose that $G_1$ has a homomorphism $\psi$ to a $5$-cycle $C$.  If $\deg(v)=3$, then let $A$ be the set of vertices of $C$ non-adjacent to $\psi(v_3)$
(in particular, $\psi(v_3)$ is an element of $A$).
If $\deg(v)=4$, then let $A$ be the set of vertices of $C$ distinct from $\psi(v_3)$ and $\psi(v_4)$.
In either case, $|A|=3$.
For each $i\in \{1,2\}$ and $y\in Y_i$, set $\psi(y)=\psi(w_i)$.  Choose $\psi(v)\in A$ distinct from $\psi(w_1)$ and $\psi(w_2)$.
Observe that this choice ensures that $\psi$ can be extended to a homomorphism from $G$ to $C$.
Since $G$ does not have a homomorphism to $C_5$, this is a contradiction.  Therefore, $G_1$ has no homomorphism to $C_5$; let $G_2$ be a $5/2$-critical
subgraph of $G_1$.  Note that $p(G_2)\le 2$ by the minimality of $G$.

Since $G$ is $5/2$-critical, we have $G_2\not\subseteq G$, and thus at least one of $w_1$ or $w_2$ is a vertex of $G_2$.
Suppose that exactly one of $w_1$ or $w_2$, say $w_1$, is in $V(G_2)$.  Let $G_3$ be the graph obtained from $G_2$ by splitting $w_1$ back to the vertices of $Y_1$,
and by adding $v_1$ and the edges between $v_1$ and $Y_1$.  We have $p(G_3)=p(G_2)+2\le 4$, which contradicts Lemma~\ref{lemma-light} since
$G\not\in P_3(G_3)$.

So both $w_1$ and $w_2$ are vertices of $G_2$.  Let $G_3$ be the graph obtained from $G_2$ by splitting $w_i$ back to the vertices of $Y_i$ and
adding $v_i$ and the edges between $v_i$ and the vertices of $Y_i$ for $i\in\{1,2\}$, and by adding the path $v_2x_2vx_1v_1$.
We have $p(G_3)=p(G_2)+3\le 5$.  Note that $G\neq G_3$.  If $\deg(v)=4$, then observe that $G\not\in P_2(G_3)\cup P_3(G_3)\cup Q'(G_3)$,
which contradicts Lemma~\ref{lemma-light} and Corollary~\ref{cor-simplight}.

Suppose that $\deg(v)=3$.  Since $v$ and $v_3$
are joined by a $1$-string in $G$, and $x_1$, $x_2$, and $v$ have degree two in $G_3$, we conclude that $G\not\in P_3(G_3)\cup Q'(G_3)$.
Hence, by Lemma~\ref{lemma-light} and Corollary~\ref{cor-simplight}, we have $G_2\in\{E_1,E_2\}$ and $G\in P_2(G_3)$.
Note that $p(G)=p(G_3)-3=2$ and $n(G)=n(G_2)+8=18$.  This contradicts Lemma~\ref{lemma-nosmall}.
\end{proof}

Let us also give another similar result concerning $(2,1,1,1)$-vertices.

\begin{lemma}\label{lemma-2111}
Let $v$ be a $(2,1,1,1)$-vertex in a smallest counterexample $G$.
If $v$ is not contained in a cell, then $v$ has at most two friends that are $(1,1,1)$-vertices and are not contained in a cell.
\end{lemma}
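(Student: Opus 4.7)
The plan is to adapt the argument of Lemma~\ref{lemma-111}. Suppose for contradiction that all three friends $v_1,v_2,v_3$ of $v$ joined by the $1$-strings $v x_i v_i$ are $(1,1,1)$-vertices not contained in any cell, and let $v_4$ be the remaining friend, joined by the $2$-string $v p q v_4$. For each $i$, let $Y_i=\{a_i^1,a_i^2\}$ denote the two neighbors of $v_i$ other than $x_i$. Because $v_i$ is not contained in a cell and $G$ has girth at least $5$ with no $6$-cycle (Lemmas~\ref{lemma-girth} and~\ref{lemma-no6}), the two friends of $v_i$ reached through $Y_i$ are distinct and non-adjacent. Form $G_1$ by deleting $v,p,q,x_1,x_2,x_3,v_1,v_2,v_3$ from $G$ and, for each $i\in\{1,2,3\}$, identifying $a_i^1$ with $a_i^2$ into a single vertex $w_i$. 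The observations above make $G_1$ triangle-free, and the splitting argument behind Corollary~\ref{cor-idok}, applied to each of the three identifications in turn, gives $E_1,E_2\not\subseteq G_1$.

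Next I show any homomorphism $\psi:G_1\to C_5$ extends to $G$, contradicting the $5/2$-criticality of $G$. Since $|V(C_5)|=5$ and at most the four values $\psi(w_1),\psi(w_2),\psi(w_3),\psi(v_4)$ are forbidden, $\psi(v)$ can be chosen to avoid all of them. The condition $\psi(v)\ne\psi(w_i)$ extends $\psi$ across the $1$-string $v x_i v_i$ exactly as in Lemma~\ref{lemma-111}: put $\psi(a_i^1)=\psi(a_i^2)=\psi(w_i)$ and choose $\psi(v_i)\in N(\psi(w_i))$ to be either $\psi(v)$ (when $\psi(v)\in N(\psi(w_i))$) or the element of $N(\psi(w_i))$ non-adjacent to $\psi(v)$; a common neighbor of $\psi(v)$ and $\psi(v_i)$ then serves as $\psi(x_i)$. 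The condition $\psi(v)\ne\psi(v_4)$ provides a walk of length three in $C_5$ from $\psi(v)$ to $\psi(v_4)$, extending $\psi$ over $p$ and $q$. Hence $G_1$ has no homomorphism to $C_5$; let $G_2\subseteq G_1$ be $5/2$-critical. Since $G_1$ is triangle-free and contains no copy of $E_1$ or $E_2$, the minimality of $G$ gives $p(G_2)\le 1$.

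Let $S=\{i:w_i\in V(G_2)\}$; since $G_2\not\subseteq G$, we have $|S|\ge 1$, and I handle three cases. When $|S|=1$, say $S=\{1\}$, let $G_3$ be $G_2$ with $w_1$ split back into $Y_1$ and $v_1$ added with its edges to $Y_1$; then $p(G_3)=p(G_2)+2\le 3$, while $w_2,w_3\notin V(G_2)$ forces $n(G)-n(G_3)\ge 12$, so $G\notin X(G_3)$ by vertex count and potential, yielding $p(G_3)\ge 6$ via Lemma~\ref{lemma-light}, a contradiction. When $|S|=2$, say $S=\{1,2\}$, also split $w_2$, add $v_2$, and attach the path $v_1 x_1 v x_2 v_2$; then $p(G_3)=p(G_2)+3\le 4$ and $n(G)-n(G_3)\ge 6$, again excluding $G\in X(G_3)$ and contradicting Lemma~\ref{lemma-light}. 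When $|S|=3$, split all three $w_i$, add all three $v_i$, and attach $v$ together with the three $1$-strings $v x_i v_i$; then $p(G_3)=p(G_2)+2\le 3$. The only potential candidate for $G\in X(G_3)$ is $G\in P_3(G_3)$ via the $2$-string $v p q v_4$, which forces $V(G_2)=V(G_1)$ and $E(G_2)=E(G_1)$, i.e., $G_2=G_1$; but then $p(G_2)=p(G_1)=p(G)\ge 2$, contradicting $p(G_2)\le 1$. So $G\notin X(G_3)$ and Lemma~\ref{lemma-light} yields the final contradiction.

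The main obstacle is the $|S|=3$ subcase, where the bound $p(G_3)\le 3$ alone does not exclude $G\in P_3(G_3)$ by the thresholds of Lemma~\ref{lemma-light}. The decisive point is that the only $2$-string of $G$ missing from $G_3$ is $v p q v_4$, so membership in $P_3(G_3)$ would force $G_2=G_1$, which is incompatible with the strict gap $p(G_1)=p(G)\ge 2>1\ge p(G_2)$. The other cases reduce to vertex-counting exclusions directly analogous to Lemma~\ref{lemma-111}.
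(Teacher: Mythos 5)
Your reduction (delete $v$, the $x_i$, the $v_i$ and the interior of the $2$-string, identify each $Y_i$ to $w_i$, extend a homomorphism back, take a critical subgraph $G_2$, split back and compare potentials) is the same skeleton as the paper's proof, and your cases $|S|=1,2$ are essentially sound (for $|S|=2$ you should note that $E_1(G_3)\cup E_2(G_3)$ is not excluded by the vertex count alone, but Corollary~\ref{cor-simplight} covers it). The genuine gap is your claim that $E_1,E_2\not\subseteq G_1$, obtained by ``applying the splitting argument behind Corollary~\ref{cor-idok} to each of the three identifications in turn,'' and the resulting bound $p(G_2)\le 1$. Corollary~\ref{cor-idok} works because splitting a \emph{single} vertex of $E_1$ or $E_2$ still leaves an even cycle of length at most $8$, which then sits inside $G$ and contradicts Lemma~\ref{lemma-noeven}. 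With three identifications this fails: every cycle of $E_1$ uses at least one of the three arcs of its $9$-cycle in full, so splitting one degree-two vertex on each arc destroys \emph{all} cycles, and the split graph is a forest, giving no contradiction with Lemma~\ref{lemma-noeven}. So a copy of $E_1$ in $G_1$ passing through all of $w_1,w_2,w_3$ cannot be ruled out this way, and only $p(G_2)\le 2$ (with $G_2\in\{E_1,E_2\}$ possible) is available, exactly as in the paper.

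This unjustified bound is precisely what your argument leans on at the decisive point: in the $|S|=3$ case you correctly reduce to $G\in P_3(G_3)$, which forces $G_2=G_1$ and $p(G_2)=p(G_1)=p(G)\ge 2$, but with the true bound $p(G_2)\le 2$ this is not a contradiction --- it is the case $p(G_2)=2$, $G_2\in\{E_1,E_2\}$, which your proof simply discards. The paper has to do real work here: it shows that the vertices of $Y_1\cup Y_2\cup Y_3$ being distinct, pairwise non-adjacent and without common neighbours other than $v_1,v_2,v_3$ forces the $w_i$ to be pairwise at distance at least three in $G_2$, which pins down $G_2=E_1$ with $w_1,w_2,w_3$ of degree two, and then the reconstructed graph (Figure~\ref{fig-2111}) has at most one of $v_1,v_2,v_3$ as a $(1,1,1)$-vertex, contradicting the hypothesis. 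Your proof is missing this entire final structural analysis, and without it (or some substitute for the false claim $E_1,E_2\not\subseteq G_1$) the lemma is not established.
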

\begin{proof}
Let $v_1$, \ldots, $v_4$ be the friends of $v$, where $v_4$ is joined by the $2$-string.
Suppose for a contradiction that $v_1$, $v_2$, and $v_3$ have degree $3$ and are not contained in a cell.
For $i\in\{1,2,3\}$, let $x_i$ be the common neighbor of $v_i$ and $v$, and let $Y_i$ be the set of neighbors of $v_i$ distinct from $x_i$.  

Let $G_1$ be the graph obtained from $G$ by removing $v$, all internal vertices
of the incident strings, and for $i\in \{1,2,3\}$, removing $v_i$ and identifying the vertices of $Y_i$ to a new vertex $w_i$.
By Lemma~\ref{lemma-noeven}, $Y_1$, $Y_2$, and $Y_3$ are disjoint and have no common neighbors.
Since $v_1$, $v_2$, and $v_3$ are $(1,1,1)$-vertices, $Y_1\cup Y_2\cup Y_3$ is an independent set in $G$.
Since $v_1$, $v_2$, and $v_3$ are not contained in a cell, it follows that $G_1$ is triangle-free.

Suppose that $G_1$ has a homomorphism $\psi$ to a $5$-cycle $C$.
For each $i\in \{1,2,3\}$ and $y\in Y_i$, set $\psi(y)=\psi(w_i)$.  Choose $\psi(v)\in V(C)$ distinct from $\psi(w_1)$, $\psi(w_2)$, $\psi(w_3)$, and $\psi(v_4)$.
Observe that this choice ensures that $\psi$ can be extended to a homomorphism of $G$ to $C$.
Since $G$ does not have a homomorphism to $C_5$, this is a contradiction.  Therefore, $G_1$ has no homomorphism to $C_5$; let $G_2$ be a $5/2$-critical
subgraph of $G_1$.  Note that $p(G_2)\le 2$.

Let $W=\{w_1,w_2,w_3\}$.
Since $G$ is $5/2$-critical, we have $G_2\not\subseteq G$, and thus $W\cap V(G_2)\neq\emptyset$.
If $|W\cap V(G_2)|=1$, say $W\cap V(G_2)=\{w_1\}$,
then let $G_3$ be the graph obtained from $G_2$ by splitting $w_1$ back to the vertices of $Y_1$,
and by adding $v_1$ and the edges between $v_1$ and $Y_1$.  We have $p(G_3)=p(G_2)+2\le 4$, which contradicts Lemma~\ref{lemma-light} since
$G\not\in \{G_3\}\cup P_3(G_3)$.

If $|W\cap V(G_2)|=2$, say $W\cap V(G_2)=\{w_1, w_2\}$, then let $G_3$ be the graph obtained from $G_2$ by splitting $w_i$ back to the vertices of $Y_i$ and
adding $v_i$ and the edges between $v_i$ and the vertices of $Y_i$ for $i\in\{1,2\}$, and by adding the path $v_2x_2vx_1v_1$.
We have $p(G_3)=p(G_2)+3\le 5$.  Observe that $G\not\in \{G_3\}\cup P_2(G_3)\cup P_3(G_3)\cup Q'(G_3)$, and thus $G_3$ contradicts Lemma~\ref{lemma-light} and Corollary~\ref{cor-simplight}.

Therefore, $W\subset V(G_2)$. Let $G_3$ be the graph obtained from $G_2$ by splitting $w_i$ back to the vertices of $Y_i$ and
adding $v_i$ and the edges between $v_i$ and the vertices of $Y_i$, and the paths $v_ix_iv$ for $i\in\{1,2,3\}$.
We have $p(G_3)=p(G_2)+2\le 4$.  By Lemma~\ref{lemma-light}, we conclude that $p(G_3)=4$ and $G\in P_3(G_3)$.  Hence, $p(G_2)=2$ and
$G_2\in\{E_1,E_2\}$.

Since the vertices of $Y_1\cup Y_2\cup Y_3$ are distinct, non-adjacent and have no common neighbors other than $v_1$, $v_2$, and $v_3$,
it follows that the distance between any two of $w_1$, $w_2$, and $w_3$ in $G_2$ is at least three.  This is only possible if $G_2=E_1$ and
$w_1$, $w_2$ and $w_3$ have degree two in $G_2$, and the corresponding graph $G$ (up to the position of the vertex $v_4$) is
depicted in Figure~\ref{fig-2111}.  However, then at most one of $v_1$, $v_2$, and $v_3$ is a $(1,1,1)$-vertex, which is a contradiction.
\end{proof}

\begin{figure}
\centering{\includegraphics{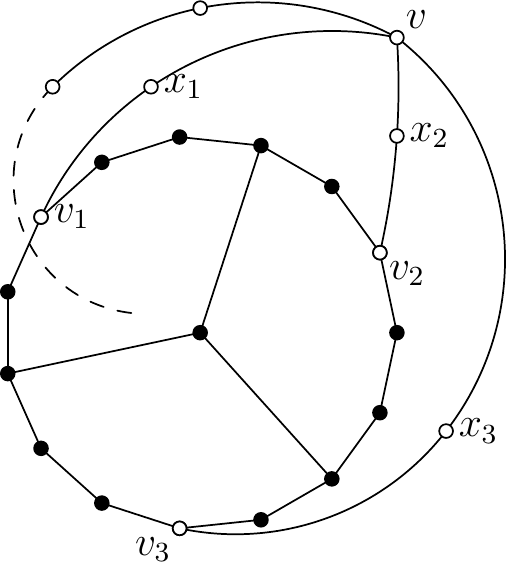}}
\caption{A configuration from the proof of Lemma~\ref{lemma-2111}.}\label{fig-2111}
\end{figure}

We also need to bound the weight of cells of degree $4$.

\begin{lemma}\label{lemma-wtcell4}
If $K$ is a $(2,k_2,k_3,k_4)$-cell in a smallest counterexample $G$, then $k_4=0$.
\end{lemma}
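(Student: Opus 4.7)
The plan is a proof by contradiction: assume $K = v_1v_2v_3v_4v_5$ is a $(2,k_2,k_3,k_4)$-cell in the smallest counterexample $G$ with $k_4 \ge 1$, labeled so that $v_5$ is the unique vertex of $K$ of degree two in $G$. For each $i \in \{1,2,3,4\}$, let $P_i$ denote the string of $G$ incident with $v_i$ and $u_i$ its other endpoint, so that $P_i$ has $\ell_i = k_i+1 \ge 2$ edges. The previously established structural lemmas (Lemmas~\ref{lemma-disj5}, \ref{lemma-no6}, and \ref{lemma-noeven}) ensure that the $u_i$ are pairwise distinct, pairwise at distance at least $5$ in $G$, and that no two of them share a common neighbour outside $K$ in the relevant configurations, so that the graph modifications below are safe.

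The main reduction is to delete $V(K)$ together with the internal vertices of the four strings and then, depending on the arrangement of the lengths $\ell_i$ around $K$, to add one or two edges between carefully chosen pairs $u_iu_j$ (or to identify two of the $u_i$'s); the distance bounds guarantee that the resulting graph $G_1$ is triangle-free, and Corollary~\ref{cor-idok} together with a local check shows $E_1,E_2 \not\subseteq G_1$. The core claim is that every $C_5$-homomorphism $\psi$ of $G_1$ extends to a homomorphism of $G$. Given $\psi$, extension to $K$ amounts to selecting one of the $10$ bijections $\phi : V(K) \to V(C_5)$ (that is, $\phi(v_i)=\phi(v_5)\pm i\pmod 5$) such that each $\phi(v_i)$ lies in the reachability set $R_i$ of colours at distance at most $\ell_i$ from $\psi(u_i)$ of matching parity, where $|R_i|=3$ if $\ell_i=2$ and $|R_i|=4$ if $\ell_i=3$. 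A case analysis on the two orientations of $K$ shows that extension fails only for a small family of highly symmetric configurations of $(\psi(u_1),\ldots,\psi(u_4))$, and the role of the added edge or identification is precisely to rule these out for the given arrangement of the strings.

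Since $G$ admits no $C_5$-homomorphism, the extension claim implies $G_1$ is not $C_5$-colourable; let $G_2 \subseteq G_1$ be a $5/2$-critical subgraph, so that $p(G_2) \le 2$ by the minimality of $G$, and $G_2$ must use the added edge or identified vertex. Reversing the modification, namely splitting the identification back into $u_i, u_j$ or replacing an added edge $u_iu_j$ by the corresponding path $u_i \cdots v_iv_j \cdots u_j$ through $K$, and re-attaching the rest of $K$ and all four strings yields a subgraph $G_3 \subseteq G$ whose potential satisfies $p(G_3) = p(G_2) - c + \sum_i k_i$ for a small constant $c$ determined by the kind of modification performed. Since $k_4 \ge 1$ forces $\sum_i k_i \ge 5$, this places $p(G_3) \le 1$, while the presence of $V(K)$ and all string internals in $V(G_3)\setminus V(G_2)$ prevents $G_3$ from matching any of the templates $\{G\}\cup P_2(G_3)\cup P_3(G_3)\cup Q'(G_3)$. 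Lemma~\ref{lemma-light} together with Corollary~\ref{cor-simplight} then demands $p(G_3) \ge 6$, a contradiction.

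The main obstacle is the extension case analysis: for each of the few possible tuples $(k_1,k_2,k_3,k_4)$ with $k_1=2$ and $k_4\ge 1$, and for each placement of the length-$3$ string on $K$, one must identify the correct modification and verify orientation-by-orientation that the union of the forbidden sets in $\mathbb{Z}_5$ imposed on $\phi(v_5)$ cannot simultaneously equal $\mathbb{Z}_5$ for both orientations. This verification is mechanical but intricate; the guiding observation is that every ``bad'' configuration $(\psi(u_1),\ldots,\psi(u_4))$ is preserved by a reflective symmetry of $K$, and the added edge or identification is chosen specifically to break that symmetry.
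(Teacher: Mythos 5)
Your outline has several genuine gaps, and two of them are fatal as written. First, you assume from the start that $v_5$ is the unique vertex of $K$ of degree two, i.e.\ that the four strings attach at four distinct vertices of $K$. Nothing justifies this: a vertex of a degree-four cell may be incident with two strings, and the configurations in which the strings attach at only two or three vertices of $K$ are exactly the cases occupying most of the paper's proof (one of them is killed purely structurally via Lemma~\ref{lemma-opp7c} and Lemma~\ref{lemma-disj5}, the others by reductions that identify the two \emph{near} neighbours $z_1,z_2$ of the doubly-attached cell vertex, not the far endpoints of the strings). Related to this, your claim that the far endpoints $u_i$ are pairwise at distance at least $5$ does not follow from Lemmas~\ref{lemma-disj5}--\ref{lemma-noeven}; those only exclude short cycles, and the triangle-freeness of $G_1$ has to be checked for the specific gadget used, which you never fix (``carefully chosen pairs $u_iu_j$''). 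Since the extension claim -- that one or two added edges or one identification among the $u_i$ suffices to force every $C_5$-homomorphism of $G_1$ to extend -- is precisely the nontrivial content of the lemma, asserting it via an unexecuted ``case analysis on highly symmetric bad configurations'' leaves the core of the proof missing. (The paper instead uses concrete gadgets: identifying $z_1,z_2$ and adding an edge between near neighbours, or adding short paths $z_1y_1y_2z_4$, $z_1yz_3$ with new internal vertices, and invokes Lemma~\ref{lemma-ext3cell}.)

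Second, the potential bookkeeping at the end does not work in the form you state. Reattaching \emph{all} of $K$ and \emph{all} four strings changes the potential by $\sum_i k_i-11$ plus $+4$ per deleted added edge or $+5$ per split identification, so larger $\sum_i k_i$ makes $p(G_3)$ \emph{larger}, not smaller; ``$k_4\ge 1$ forces $\sum_i k_i\ge 5$, hence $p(G_3)\le 1$'' has the dependence backwards. Worse, your reason for excluding the templates -- ``the presence of $V(K)$ and all string internals in $V(G_3)\setminus V(G_2)$'' -- confuses the two comparisons: Lemma~\ref{lemma-light} constrains how $G$ extends $G_3$, so what matters is $G\setminus G_3$; since you reattach everything, $G_3$ may simply equal $G$ (or differ from it by a pattern matching $P_2$, $P_3$ or $Q'$), and then no contradiction arises. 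This is exactly why the paper's reductions deliberately leave out parts of $K$ or of the strings when building $G_3$, and why in several subcases the contradiction is not with Lemma~\ref{lemma-light} at all but with Lemma~\ref{lemma-nosmall}, after concluding $G_2\in\{E_1,E_2\}$, $G\in Q'(G_3)$, $p(G)=2$ and $n(G)=18$. Your attempt to dismiss $E_1,E_2$ via Corollary~\ref{cor-idok} also does not apply when the modification adds edges rather than identifying vertices of $G$, so the possibility $G_2\in\{E_1,E_2\}$ must be confronted, not waved away.
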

\begin{proof}
Let $K=v_1v_2v_3v_4v_5$, and suppose for a contradiction that $k_2,k_3,k_4\ge 1$.  Let $P_1$, \ldots, $P_4$ be the paths in $G$ with exactly one end
in $K$ and with internal vertices of degree two, where $P_1$ has length three and $P_2$, $P_3$, and $P_4$ have length two.

Suppose that $K$ contains three vertices of degree two.  By Observation~\ref{obs-simp}, we can assume that each of $v_1$ and $v_3$ is incident
with at least one of the paths $P_1$, \ldots, $P_4$.  By Lemmas~\ref{lemma-opp5c} and~\ref{lemma-disj5}, we can assume that $P_1$ and $P_2$
are incident with $v_1$, and $P_3$ and $P_4$ are incident with $v_3$.  However, by Lemma~\ref{lemma-opp7c}, the paths $P_2$ and $v_1v_2v_3$ belong to a cell or a $7$-cycle,
whose union with $K$ contradicts Lemma~\ref{lemma-disj5}.

Hence, $K$ contains at most two vertices of degree two.
Let us first consider the case that $K$ contains two vertices of degree two. Lemmas~\ref{lemma-opp5c} and~\ref{lemma-disj5} imply that they are non-adjacent.
By symmetry, we can assume that $v_1$ is incident with two of the paths $P_1$, \ldots $P_4$, that $v_2$ has degree two, and that $v_3$ is incident with one of the paths.
Let $z_1$ and $z_2$ be the neighbors of $v_1$ that do not belong to $K$, and let $z_3$ be the neighbor of $v_3$ not belonging to $K$.

Suppose that $v_5$ is incident with one of the paths $P_1$, \ldots $P_4$, and let $z_5$ be its neighbor not belonging to $K$.  Let $G_1$ be the graph obtained from $G$
by removing $K$ and $z_3$, identifying $z_1$ with $z_2$ to a new vertex $w$, and adding the edge $wz_5$.  By Lemma~\ref{lemma-disj5}, $G_1$ is triangle-free.
By Lemma~\ref{lemma-ext3cell}, any homomorphism from $G_1$ to $C_5$ would extend to a homomorphism from $G$,
and thus no such homomorphism exists.  Let $G_2$ be a $5/2$-critical subgraph of $G_1$.  By the minimality of $G$, we have $p(G_2)\le 2$.
Let $G_3$ be obtained from $G_2$ by removing the edge $wz_5$ (if present), splitting $w$ back to $z_1$ and $z_2$,
adding the path $z_1v_1z_2$, and if $z_5\in V(G_2)$, adding the path $v_1v_5z_5$.
We have $p(G_3)\le p(G_2)+3\le 5$.  By Lemma~\ref{lemma-light} and Corollary~\ref{cor-simplight},
we have $G_2\in\{E_1,E_2\}$ and (since $v_2,v_3,v_4,z_3\not\in V(G_3)$) $G\in Q'(G_3)$.  Note that $p(G)=p(G_3)-3=2$ and $n(G)=n(G_2)+8=18$, which contradicts Lemma~\ref{lemma-nosmall}.

Hence, if $K$ contains two vertices of degree two, we can assume that $v_5$ has degree two, and $v_4$ has a neighbor $z_4$ not belonging to $K$.
Let $G_1$ be obtained from $G$ by removing $K$, identifying $z_1$ with $z_2$ to a new vertex $w$, and adding the edge $z_3z_4$.
By Lemma~\ref{lemma-disj5}, $G_1$ is triangle-free.  By Lemma~\ref{lemma-ext3cell}, any homomorphism from $G_1$ to $C_5$ would extend to a homomorphism from $G$,
and thus no such homomorphism exists.  Let $G_2$ be a $5/2$-critical subgraph of $G_1$, and note that $p(G_2)\le 2$.  Let $G_3$ be obtained from $G_2$ by
\begin{itemize}
\item if $w\in V(G_2)$, splitting $w$ back to $z_1$ and $z_2$ and adding the path $z_1v_1z_2$, and
\item if $z_3z_4\in E(G_2)$, removing the edge $z_3z_4$ and adding the path $z_3v_3v_4z_4$, and
\item if both $w\in V(G_2)$ and $z_3z_4\in E(G_2)$, also adding the path $v_1v_2v_3$.
\end{itemize}
Observe that $p(G_3)\le p(G_2)+2\le 4$ and that $G\not\in \{G_3\}\cup P_3(G_3)$, contrary to Lemma~\ref{lemma-light}.

Therefore, $K$ contains only one vertex of degree two, and the ends of $P_1$, \ldots, $P_4$ in $K$ are pairwise distinct.
By symmetry, we can assume that $P_1$ is incident with $v_1$, $P_2$ with $v_2$, and $P_3$ with $v_3$.  For $i\in\{2,3\}$, let $z_i$ be the neighbor of $v_i$ not in $K$.
Let $z_1$ and $z_4$ be the endvertices of $P_1$ and $P_4$, respectively, that do not belong to $K$.

Suppose that $P_4$ is incident with $v_4$.  Let $G_1$ be the graph obtained from $G$ by removing $K$ and the internal vertices of $P_1$ and $P_4$,
adding the edge $z_2z_3$, and adding a path $z_1y_1y_2z_4$.  By Lemma~\ref{lemma-disj5}, $G_1$ is triangle-free.
A straightforward case analysis shows that any homomorphism from $G_1$ to $C_5$ would extend to a homomorphism from $G$,
and thus no such homomorphism exists.  Let $G_2$ be a $5/2$-critical subgraph of $G_1$.
Let $G_3$ be obtained from $G_2$ by removing $y_1$ and $y_2$ (if present in $G_2$), if $G_2$ contains the edge $z_2z_3$, removing this edge and adding the path $z_2v_2z_3v_3$,
and if both $y_1\in V(G_2)$ and $z_2z_3\in E(G_2)$, adding the paths $P_4$ and $v_3v_4$.  Observe that $p(G_3)\le p(G_2)+2\le 4$ and $G\not\in \{G_3\}\cup P_3(G_3)$, contrary to Lemma~\ref{lemma-light}.

Finally, suppose that $P_4$ is incident with $v_5$.  By the result of the previous paragraph and symmetry, we can assume that $P_4$ is not a part of a $2$-string.
Let $G_1$ be the graph obtained from $G$ by removing $K$ and the internal vertices of $P_1$ and $P_4$,
adding the edge $z_2z_3$, and adding a path $z_1yz_3$.  By Lemma~\ref{lemma-disj5}, $G_1$ is triangle-free.
A straightforward case analysis shows that any homomorphism from $G_1$ to $C_5$ would extend to a homomorphism from $G$,
and thus no such homomorphism exists.  Let $G_2$ be a $5/2$-critical subgraph of $G_1$.
Let $G_3$ be obtained from $G_2$ by removing $y$ (if present in $G_2$), if $G_2$ contains the edge $z_2z_3$, removing this edge and adding the path $z_2v_2z_3v_3$,
and if both $y\in V(G_2)$ and $z_2z_3\in E(G_2)$, adding the path $P_1$ and $K$.  If $z_2z_3\not\in E(G_2)$, then since $G_2\not\subseteq G$ we have $y\in V(G_2)$,
and $p(G_3)=p(G_2)+3\le 5$.  Since $G\not\in \{G_3\}\cup P_2(G_3)\cup P_3(G_3)\cup Q'(G_3)$, this contradicts Lemma~\ref{lemma-light} and Corollary~\ref{cor-simplight}.
If $y\not\in V(G_2)$, then $z_2z_3\in E(G_2)$ and $p(G_2)=p(G_3)+2\le 4$.  Since $G\not\in \{G_3\}\cup P_3(G_3)$, this contradicts Lemma~\ref{lemma-light}.
Finally, let us consider the case that $y\in V(G_2)$ and $z_2z_3\in E(G_2)$.
Observe that $p(G_3)\le p(G_2)+2\le 4$.  By Lemma~\ref{lemma-light}, it follows that $G\in \{G_3\}\cup P_3(G_3)$.  This is a contradiction, since
$P_4\not\subseteq G_3$ and $P_4$ is not a part of a $2$-string.
\end{proof}

\section{Discharging}\label{sec-disch}

We now finish the proof of the main result by discharging.

\begin{proof}[Proof of Theorem~\ref{thm-main}]
If Theorem~\ref{thm-main} is false, then there exists a smallest counterexample $G$.  Firstly, let us assign charge $ch_0(v)=10-4\deg(v)$ to each vertex of $G$.  Note that the sum of the charges is $10n(G)-8e(G)=2p(G)$.
Vertices of degree two have charge $2$.  Next, each vertex $v$ of degree two sends $1$ unit of charge to the ends of the string containing $v$, obtaining an altered charge $ch_1$
where $ch_1(v)=0$ for vertices of degree two and $ch_1(v)=10-4\deg(v)+wt(v)$ for any vertex $v$ of degree at least three.
Next, each vertex contained in a cell sends all of its charge to the cell, obtaining the second altered charge $ch_2$.
Hence $ch_2(v)=0$ for all vertices of degree two and all vertices contained in cells.
For a cell $K$, if we let $X$ denote the set of vertices of degree at least three in $K$, then
\begin{eqnarray*}
ch_2(K)&=&\sum_{v\in X} (10-4\deg(v)+wt(v))\\
&=&10|X|-(8|X|+4\deg(K))+2(5-|X|)+wt(K)\\
&=&10-4\deg(K)+wt(K).
\end{eqnarray*}

Let us now move the charge according to the following rule, obtaining the final charge $ch_3$.
Let $v_1v_2v_3$ be a $1$-string in $G$, where $v_3$ is not contained in a cell.  If
\begin{itemize}
\item $v_1$ is contained in a cell $K$, or
\item $v_1$ has degree at least five, or
\item $v_1$ has degree four and is incident with a $0$-string, or
\item $v_1$ has degree four and $v_3$ is a $(1,1,1)$-vertex,
\end{itemize}
then $v_3$ sends $1/2$ to $v_1$ (or the cell $K$ in the first case).

We will show that every cell and vertex has final charge at most $0$.
Let us first consider the final charge of a vertex $v\in V(G)$.
Clearly, $ch_3(v)\le 0$ if $v$ either has degree two or is contained in a cell.
Suppose that $v$ has degree at least three and is not contained in a cell.

If $v$ has degree $3$, then by Lemma~\ref{lemma-opp5c}, $v$ is not incident with a $2$-string.  If $wt(v)\le 2$,
then $ch_2(v)\le 0$ and $ch_3(v)\le ch_2(v)$.  If $v$ is a $(1,1,1)$-vertex, then $ch_2(v)=1$ and by Lemma~\ref{lemma-111}, $v$ sends $1/2$ to at least two of its friends,
and thus $ch_3(v)\le 0$.  Therefore, every vertex of degree three has non-positive charge.

Let us now consider the case that $v$ has degree at least $4$.
Let $k$ denote the number of $1$-strings incident with $v$.
If $v$ has degree at least $5$, then by Observation~\ref{obs-simp} we have $wt(v)\le 2\deg(v)-k$,
and thus $ch_2(v)\le 10-2\deg(v)-k\le -k$.  The vertex $v$ receives $1/2$ at most $k$ times, and thus its final charge is $ch_3(v)\le -k/2\le 0$.

Suppose that $v$ has degree $4$.  If $v$ is incident with a $0$-string, then $wt(v)\le 6-k$, and thus $ch_2(v)\le -k$ and $ch_3(v)\le -k/2\le 0$.
If $v$ is not incident with a $0$-string, then by Lemma~\ref{lemma-wt4}, $v$ is a $(2,2,1,1)$, $(2,1,1,1)$ or a $(1,1,1,1)$-vertex.
If $v$ is a $(1,1,1,1)$-vertex, then $ch_2(v)=-2$ and $ch_3(v)\le -2+k/2\le 0$.  If $v$ is a $(2,1,1,1)$-vertex, then by Lemma~\ref{lemma-2111}, $v$ receives charge from at most two friends,
and $ch_3(v)\le ch_2(v)+1=0$.  If $v$ is a $(2,2,1,1)$-vertex, then let $vx_1v_1$ and $vx_2v_2$ be the $1$-strings incident with $v$.  Since $v$ is not contained in a cell,
Lemma~\ref{lemma-opp7c} implies that $G$ contains a path $v_1y_1y_2v_2$.  Consequently, at most one of $v_1$ and $v_2$ is a $(1,1,1)$-vertex.  If neither $v_1$ nor $v_2$ is a $(1,1,1)$-vertex not
contained in a cell,
then $ch_3(v)=ch_2(v)=0$.  If say $v_1$ is a $(1,1,1)$-vertex, then $y_2$ has degree at least three, and thus $v_2$ is incident with a $0$-string.
Furthermore, by Lemma~\ref{lemma-111}, if $v_1$ is not contained in a cell, then $v_2$ is either contained in a cell or has degree at least $4$.
We conclude that if $v$ receives $1/2$ from $v_1$, then it sends $1/2$ to $v_2$, and $ch_3(v)=ch_2(v)+1/2-1/2=0$.

Therefore, every vertex of $G$ has non-positive charge.  Let us now consider the final charge of a cell $K$.  Let $k$ denote the number of $1$-strings with exactly one end in $K$.
By Lemma~\ref{lemma-wtcell3}, we have $\deg(K)\ge 4$.  If $\deg(K)=4$, then by Lemma~\ref{lemma-wtcell4}, $K$ is either a $(1,1,1,1)$-cell, or a $(k_1,k_2,k_3, 0)$-cell for some integers
$k_1\ge k_2\ge k_3\ge 0$.
In the former case, we have $ch_2(K)=-2=-k/2$.  In the latter case, $wt(K)\le 6-k$, and thus $ch_2(K)\le -k$.  If $\deg(K)\ge 5$, then $wt(K)\le 2\deg(K)-k$
and $ch_2(K)\le 10-2\deg(K)-k\le -k$.  In all cases, $ch_3(K)\le ch_2(K)+k/2\le 0$.

Therefore, $$2p(G)=\sum_v ch_0(v)=\sum_v ch_3(v)+\sum_K ch_3(K)\le 0,$$
which contradicts the assumption that $p(G)\ge 2$.
\end{proof}

\bibliographystyle{siam}
\bibliography{circul}

\end{document}